\def\<{\langle}
\def\>{\rangle}
\def\wt{\widetilde}
\renewcommand{\leq}{\leqslant}
\renewcommand{\geq}{\geqslant}
\renewcommand{\le}{\leqslant}
\renewcommand{\ge}{\geqslant}
\def\EE{{\mathcal E}}
\def\FF{{\mathcal F}}
\def\LL{{\mathcal L}}
\newcommand{\E}{\mathcal{E}}
\newcommand{\F}{\mathcal{F}}
\definecolor{mno}{rgb}{0.5,0.1,0.5}
\newcommand{\bP}{\mathds P}
\newcommand{\bE}{\mathds E}
\newcommand{\R}{\mathds R}
\newcommand{\bS}{\mathds S}
\newcommand{\e}{\varepsilon}
\newcommand{\Pp}{\mathds P}
\newcommand{\Ee}{\mathds E}
\newcommand{\I}{\mathds 1}
\newcommand{\w}{\omega}
\newcommand{\Z}{\mathds Z}
\newcommand{\Ss}{ \mathds{S}}
\newtheorem{theorem}{Theorem}[section]
\newtheorem{lemma}[theorem]{Lemma}
\newtheorem{proposition}[theorem]{Proposition}
\newtheorem{corollary}[theorem]{Corollary}
\theoremstyle{definition}
\newtheorem{example}[theorem]{Example}
\newtheorem{remark}[theorem]{Remark}
\numberwithin{equation}{section}
\begin{document}
\allowdisplaybreaks
\title[Homogenization of
symmetric stable-like processes] {\bfseries Homogenization of
symmetric stable-like processes in stationary ergodic medium}
\author{Xin Chen,\quad Zhen-Qing Chen,\quad Takashi Kumagai\quad \hbox{and}\quad Jian Wang}

\date{}

\maketitle

\begin{abstract}
This paper studies homogenization of symmetric non-local
Dirichlet forms with $\alpha$-stable-like jumping kernels in one-parameter stationary ergodic environment. Under suitable conditions, we establish
homogenization results and
identify
the limiting effective Dirichlet forms
 explicitly. The coefficients of the
jumping kernels of Dirichlet forms and symmetrizing measures
are allowed to be
degenerate  and unbounded;
and
the coefficients in the effective Dirichlet forms can be
degenerate.

\medskip

\noindent\textbf{Keywords:} homogenization; symmetric non-local
Dirichlet form; ergodic random medium; $\alpha$-stable-like operator

\medskip

\noindent \textbf{MSC 2010:} 60G51; 60G52; 60J25; 60J75.
\end{abstract}
\allowdisplaybreaks

\section{Introduction and results}\label{section1}

\subsection{Background}

The aim of homogenization theory is to provide the macroscopic rigorous characterizations of  microscopically heterogeneous media, which usually involve rapidly oscillating functions of the form $a=a(x/\e)$ with $\e$ being a small positive parameter that characters the microscopic length scale of the media. Homogenization
has been a very active research area for a long time, and
 there is now a vast literature on this topic, see e.g. \cite{A, BLP, JKO, T}.

Homogenization problems for
 random structures are widely studied.
The first rigorous result for
second order
elliptic operators in divergence
forms with stochastically homogeneous
random coefficients
was independently obtained by Kozlov \cite{Koz} and by Papanicolaou and Varadhan
\cite{PV}. The crucial point of
their approaches
is to construct the so-called corrector (i.e., the solution of
certain associated elliptic equations)
and prove that it grows sub-linearly.  Later on a lot of homogenization problems  were investigated  for various elliptic and parabolic differential equations as well as system of equations
in
random stationary media.
In particular, Bourgeat et al.\ in  \cite{BMW} introduced the stochastic version of the two-scale convergence method.
Caffarelli, Souganidis and Wang in \cite{CSW} studied the stochastic homogenization in the context of fully non-linear
uniformly elliptic equations in stationary ergodic environment.

The  goal of this paper  is to address  homogenization problem
for
 non-local operators with random coefficients and to give a characterization of the homogenized
limiting operators.
We start with a brief review of some recent work on
homogenization problems
for non-local operators.
Piatnitski and Zhizhina in \cite{PZ}
studied homogenization problem
for integral operators of convolution type with dispersal kernels (or jumping kernels) that have random stationary ergodic coefficients and
finite second moment. For discrete operators with ergodic weights associated with unbounded-range of jumps (but still having finite second moment),
we refer to \cite{FHS}. In these two cases the scaling order
is naturally a
Brownian scaling, and the limiting operator is a Laplacian.
 One key
element in their approaches
is that the corrector method or the two-scale convergence approach
works when the jumping kernel has finite second moments. However,
when the jumping kernel has infinite second
moment,
the scaling order and limiting process are completely different.
Chen, Kim and Kumagai in \cite{CKK} proved the Mosco convergence of non-local Dirichlet forms associated with
symmetric stable-like random walks in independent long-range
conductance model.
They showed that the
limiting process
is a symmetric $\alpha$-stable L\'evy process.
Kassmann, Piatnitski and Zhizhina in \cite{KPZ} investigated
homogenization of a class of symmetric stable-like processes in ergodic environment whose jumping kernels are of  product form.
Homogenization
problem of symmetric stable-like processes in
two-parameter ergodic environment was also studied in \cite{KPZ}.
We shall mention that in \cite{KPZ} random coefficients of the  jumping kernel are
assumed to be uniformly
elliptic and bounded.
Recently,  Flegel and Heida \cite{FH} considered the corresponding problem in the discrete setting under some moment
conditions on coefficients in two-parameter ergodic
environment. The stochastic homogenization of a class of fully
non-linear integral-differential equations in ergodic environment was
studied by Schwab  \cite{S}. We refer the reader to
\cite{BGG,BRS,PZ1,S0} and references therein for
 homogenization of integral equations (or jump processes) with
periodic coefficients.

As mentioned above, known results concerning
stochastic homogenization of stable-like Dirichlet forms in  one-parameter ergodic environment
 require the coefficients enjoying very special forms
(for examples, the product form). The contribution of this paper is to systematically  study
homogenization problem for symmetric non-local operators in one-parameter ergodic environment under more general settings,
where the corresponding random coefficients can be degenerate and unbounded.

\subsection{Setting}
 Let $d\geq 1$, and $(\Omega, \mathcal{F}, \Pp)$ be the probability space that describes  the random environment on which a measurable group of transformations $\{\tau_x\}_{x\in \R^d}$
 is defined with $\tau_0={\rm id}$,  the identity map on $\Omega$,
 and $\tau_x \circ \tau_x=\tau_{x+y}$  for every $x, y\in \R^d$.
One may think of $\tau_x \w:=\tau_x(\w)$ as a translation of the environment $\w\in \Omega$ in the direction $x\in \R^d$.
We assume that $\{\tau_x\}_{x\in \R^d}$ is stationary and ergodic;
that is,
\begin{itemize}
\item[(i)]  $\Pp(\tau_x A)=\Pp(A)$ for all $A\in \mathcal{F}$ and $x\in \R^d$;

\item[(ii)] if $A\in \mathcal{F}$ and $\tau_xA=A$ for all $x\in \R^d$, then $\Pp(A)\in \{0,1\}$;

\item[(iii)] the function $(x,\w)\mapsto \tau_x \w$ is $\mathscr{B}(\R^d)\times \mathcal{F}$-measurable.
\end{itemize}

Consider
a random variable $\mu: \Omega\rightarrow [0,\infty)$
so that for every $\w \in \Omega$, $\mu (\tau_x \omega) >0$ for a.e.\ $x\in  \R^d$ and $\Ee [\mu ]=1$,
and  a random function $\kappa:\R^d\times \R^d \times \Omega \rightarrow [0,\infty)$ so that for every $\w\in \Omega$,
\begin{equation}\label{eq:shifterc}
\kappa (x,y;\w)=\kappa (y,x;\w), \quad  \kappa (x+z,y+z;\w)=\kappa (x,y;\tau_z\w) \quad
\hbox{for  any }  x,y,z\in \R^d,
\end{equation}
 and
\begin{equation}\label{eq:domain} x\mapsto \int (1\wedge |z|^2)\frac{\kappa (x,x+z;\w)}{|z|^{d+\alpha}}\,dz \in L^1_{loc}(\R^d;dx),
\quad \mbox{$\Pp$-a.e.}
\end{equation}
For each $\w \in \Omega $, these two functions determine a regular symmetric
Dirichlet form $(\EE^\w, \FF^w)$ on
\break \hfill
$L^2(\R^d; \mu(\tau_x \w)\, dx)$ as follows.

Denote by $\Delta:=\{(x,x)\in \R^d\}$ the diagonal of $\R^d \times \R^d$
and $\mu^\w(dx) :=
\mu(\tau_x\w)\,dx$,
which has full support on $\R^d$.
Let $\Gamma$ be an
infinite  cone in $\R^d$ having non-empty interior
that is symmetric with respect to the origin; that is, $\Gamma$ is a non-empty open subset of $\R^d$
so that  $rx\in \Gamma$ for every $x\in \Gamma$ and $r\in \R$.
For  $\alpha\in (0,2)$, define
\begin{equation}\label{e:df}\EE^\w(f,g):=\frac{1}{2}
\iint_{\R^d\times\R^d\setminus \Delta}
(f(x)-f(y))(g(x)-g(y))\frac{\kappa (x,y;\w)}{|x-y|^{d+\alpha}}
\I_{\{ y-x\in \Gamma\}}\,dx\,dy,\quad f,g\in \FF^\w,
\end{equation}
where $\FF^\w$
 the closure of $C_c^1(\R^d)$ with respect to the norm $(\E^\w(\cdot,\cdot)+\|\cdot\|_{L^2(\R^d; \mu^\w (dx))}^2)^{1/2}.$
Note that under \eqref{eq:domain}, $\E^\w(f,f)<\infty$ for all $f\in C_c^1 (\R^d)$. Here and in what follows,
$C^1_c(\R^d)$ (respectively, $C_c(\R^d)$) denotes the space of $C^1$-smooth (respectively, continuous)
functions on $\R^d$ with compact support.
Clearly,   $(\EE^\w, \FF^w)$ is a regular symmetric Dirichlet form on $L^2(\R^d; \mu^\w(dx))$.
So there exist a  Borel subset
$\mathcal{N}^\w\subset\R^d$ having zero
$\EE^\w$-capacity, and a symmetric Hunt process
$X^\w:=\left\{ X^\w_t, {t\ge 0};   \Pp^x, x \in \R^d\setminus\mathcal{N}^\w \right\}$ on
 the state space $\R^d\setminus\mathcal{N}^\w$; see \cite[Chapter 7]{FOT}.
 Note that $X^\w$ is a time change of the process corresponding to
the Dirichlet form $(\E^\w,\F^\w)$ on $L^2(\R^d; dx)$.  When $\Gamma=\R^d$ and $\kappa (x, y; \w)$ is bounded between two positive constants,
this Hunt process is
a symmetric $\alpha$-stable-like process studied in \cite{CK}.

For any $\varepsilon>0$, set $X^{\varepsilon,\w}=(X_t^{\varepsilon,\w})_{t\ge0}:=(\varepsilon X_{t/\varepsilon^{\alpha}}^\w)_{t\ge0}$. The following simple lemma characterizes the
scaled processes $\{ X^{\varepsilon,\w}: \e >0\}$.
 Its proof is postponed to the appendix of this paper.

\begin{lemma}\label{L:d1}
For any $\e>0$, the scaled process $X^{\varepsilon,\w}$ has a
symmetrizing
measure $\mu^{\e,\w}(dx)=\mu (\tau_{x/\e} \w\big)\,dx$, and the associated regular
Dirichlet form  $(\EE^{\varepsilon,\w}, \FF^{\e,\w})$ on $L^2(\R^d;\mu^{\e,\w}(dx))$ is given by
\begin{equation}\label{eq:niebiws}
  \EE^{\varepsilon,\w}(f,g)  =\frac{1}{2}\iint_{\R^d\times \R^d\backslash\Delta}
(f(x)-f(y))(g(x)-g(y)) \frac{\kappa (x/\e,y/\e;
\w)}{|x-y|^{d+\alpha}} \I_{\{ x-y \in \Gamma \}} \,dx\,dy,\quad f,g\in \FF^{\e,\w},
\end{equation}where
$\FF^{\e,\w}$ is the closure of $C^1_c(\R^d)$ with respect to the norm $
\big(\EE^{\e,\w}(\cdot,\cdot)+ \|\cdot\|_{L^2(\R^d;\mu^{\e,\w}(dx))}^2\big)^{1/2}.
$
\end{lemma}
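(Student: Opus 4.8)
The plan is to obtain $(\EE^{\varepsilon,\w},\FF^{\varepsilon,\w})$ from $(\EE^\w,\FF^\w)$ by composing two elementary transformations of symmetric Markov processes: the spatial dilation $\Phi_\varepsilon(x):=\varepsilon x$ and the time change by the deterministic constant $\varepsilon^{\alpha}$. First I would handle the dilation. Put $Z^\w_t:=\varepsilon X^\w_t=\Phi_\varepsilon(X^\w_t)$. Since $\Phi_\varepsilon$ is a bi-measurable bijection of $\R^d$, the process $Z^\w$ is symmetric with respect to the image measure $\mu^\w\circ\Phi_\varepsilon^{-1}$ and its Dirichlet form on $L^2(\R^d;\mu^\w\circ\Phi_\varepsilon^{-1})$ is $(f,g)\mapsto\EE^\w(f\circ\Phi_\varepsilon,g\circ\Phi_\varepsilon)$ with domain $\{f:f\circ\Phi_\varepsilon\in\FF^\w\}$ (these transformation rules are standard; see e.g.\ \cite{FOT}). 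Performing the substitution $u=\varepsilon x$, $v=\varepsilon y$ in the bilinear form and in the measure --- using $|\varepsilon x-\varepsilon y|=\varepsilon|x-y|$, $dx\,dy=\varepsilon^{-2d}\,du\,dv$, and the scale invariance $\varepsilon\Gamma=\Gamma$ of the cone (together with its symmetry, to replace $\I_{\{v-u\in\Gamma\}}$ by $\I_{\{u-v\in\Gamma\}}$) --- one obtains $\mu^\w\circ\Phi_\varepsilon^{-1}(du)=\varepsilon^{-d}\mu(\tau_{u/\varepsilon}\w)\,du$ and $\EE^\w(f\circ\Phi_\varepsilon,g\circ\Phi_\varepsilon)=\varepsilon^{\alpha-d}\,\EE^{\varepsilon,\w}(f,g)$ with $\EE^{\varepsilon,\w}$ exactly as in \eqref{eq:niebiws}. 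In passing this shows $\EE^{\varepsilon,\w}(f,f)<\infty$ for $f\in C_c^1(\R^d)$, since then $f\circ\Phi_\varepsilon\in C_c^1(\R^d)$ and \eqref{eq:domain} gives $\EE^\w(f\circ\Phi_\varepsilon,f\circ\Phi_\varepsilon)<\infty$; moreover $\Phi_\varepsilon$ maps $C_c^1(\R^d)$ bijectively onto itself.

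Next I would insert the time change. Since $X^{\varepsilon,\w}_t=Z^\w_{t/\varepsilon^{\alpha}}$ is $Z^\w$ run at the constant speed $\varepsilon^{-\alpha}$, it has the same symmetrizing measure as $Z^\w$ and its Dirichlet form equals $\varepsilon^{-\alpha}$ times that of $Z^\w$. Combining with the previous step, $X^{\varepsilon,\w}$ is symmetric with respect to $\varepsilon^{-d}\mu(\tau_{\cdot/\varepsilon}\w)\,dx$ and its Dirichlet form is $\varepsilon^{-\alpha}\cdot\varepsilon^{\alpha-d}\,\EE^{\varepsilon,\w}=\varepsilon^{-d}\,\EE^{\varepsilon,\w}$. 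Finally, multiplying simultaneously the reference measure and the Dirichlet form by the same positive constant $\varepsilon^{d}$ changes neither the associated $L^2$-semigroup nor the underlying Hunt process; hence $X^{\varepsilon,\w}$ is symmetric with respect to $\mu^{\varepsilon,\w}(dx)=\mu(\tau_{x/\varepsilon}\w)\,dx$ and its Dirichlet form on $L^2(\R^d;\mu^{\varepsilon,\w}(dx))$ is precisely $\EE^{\varepsilon,\w}$, as claimed.

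For the domain, the chain of identifications above carries $\FF^\w$ to $\{f:f\circ\Phi_\varepsilon\in\FF^\w\}$, and since $\Phi_\varepsilon$ is a bijection of $C_c^1(\R^d)$ and the $1$-norms attached to $(\EE^{\varepsilon,\w},\mu^{\varepsilon,\w})$ and to $(\EE^\w,\mu^\w)$ differ only by rescalings of the form and of the measure by constants --- which give equivalent norms and hence the same Hilbert-space completion --- this domain is exactly the closure of $C_c^1(\R^d)$ under $\big(\EE^{\varepsilon,\w}(\cdot,\cdot)+\|\cdot\|^2_{L^2(\R^d;\mu^{\varepsilon,\w}(dx))}\big)^{1/2}$; regularity of $(\EE^{\varepsilon,\w},\FF^{\varepsilon,\w})$ then follows since $C_c^1(\R^d)\subset\FF^{\varepsilon,\w}\cap C_c(\R^d)$ is dense both in $\FF^{\varepsilon,\w}$ and, in the uniform norm, in $C_c(\R^d)$. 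I do not expect a genuine obstacle here: the only points requiring care are the bookkeeping of the powers of $\varepsilon$ produced by the change of variables in the measure and in the form, and the (standard) observation that their common factor $\varepsilon^{-d}$ is immaterial. An equivalent alternative is to transform directly the jumping measure $\kappa(x,y;\w)|x-y|^{-d-\alpha}\I_{\{y-x\in\Gamma\}}\,dx\,dy$ of $X^\w$ under $x\mapsto\varepsilon x$ together with the time rescaling and read off \eqref{eq:niebiws}. The routine details are deferred to the appendix.
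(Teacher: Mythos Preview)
Your proof is correct and arrives at the same change-of-variables computation as the paper, but the framing differs slightly. The paper's appendix works at the level of semigroups and generators: it first writes $P_t^{\varepsilon,\w}f(x)=P_{t/\varepsilon^{\alpha}}^{\w} f^{(\varepsilon)}(x/\varepsilon)$ with $f^{(\varepsilon)}(x)=f(\varepsilon x)$, differentiates at $t=0$ to obtain $\LL^{\varepsilon,\w}f(x)=\varepsilon^{-\alpha}\LL^{\w}f^{(\varepsilon)}(x/\varepsilon)$, and then computes $-\int \LL^{\varepsilon,\w}f\cdot g\,d\mu^{\varepsilon,\w}$ directly by the substitution $x\mapsto x/\varepsilon$, $y\mapsto y/\varepsilon$ and the known symmetry of $\LL^\w$ on $L^2(\mu^\w)$. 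You instead decompose the scaling into the spatial dilation $\Phi_\varepsilon$ followed by a constant time change, and invoke the general transformation rules for Dirichlet forms under bijections and under multiplication of the form by a positive constant. Your route is a bit more modular and makes the bookkeeping of the $\varepsilon$-powers (the common factor $\varepsilon^{-d}$ in form and measure) more transparent; the paper's route is more hands-on and avoids appealing to abstract transformation results. Either way the substance is the single substitution $u=\varepsilon x$, $v=\varepsilon y$ together with the cone property $\varepsilon\Gamma=\Gamma$, and both arguments handle the domain identically via the bijection of $C_c^1(\R^d)$ under $\Phi_\varepsilon$.
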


 Let $(\LL^\w, {\rm Dom} (\LL^{ w}))$ and $(\LL^{\e, \w},{\rm Dom} (\LL^{\e, \w})) $ be the $L^2$-generator of the Dirichlet form $(\E^\w,\F^\w)$ on $L^2(\R^d; \mu^{\w})$
 and the Dirichlet form  $(\E^\w,\F^\w)$ on $L^2(\R^d; \mu^{\e, \w})$, respectively.
 That is,
 \begin{equation}\label{e:1.5}
 \LL^\w f(x)= \lim_{\delta \to 0} \frac1{\mu ({\tau_x}  \w) } \int_{ \{y\in \R^d: |y-x|>\delta \}} ( f(y)-f(x))  \frac{\kappa (x, y, \w)}{|y-x|^{d+\alpha}}
 \I_{\{ y-x\in \Gamma \}} \,dy
 \quad \hbox{for } f\in {\rm Dom} (\LL^{ w})
 \end{equation}
 and
 \begin{equation}\label{e:1.6}
 \LL^{\e, \w}  f(x)= \lim_{\delta \to 0} \frac1{\mu ( \tau_{x/\e} \w) }\int_{\{y\in \R^d: |y-x|>\delta\}}
 ( f(y)-f(x)) \frac{\kappa (x/\e, y/\e, \w)}{|y-x|^{d+\alpha}} \I_{\{y-x\in \Gamma \} }\, dy
 \quad \hbox{for } f\in {\rm Dom} (\LL^{\e, \w}).
 \end{equation}
It is easy to see that for each $\e>0$,
$g\in {\rm Dom} (\LL^{\e, \w} )$ if and only of $g^{(\e)}\in {\rm Dom}(\LL^\w)$, where
$g^{(\varepsilon)}(x)=g(\varepsilon x)$, and
$$
\LL^{\e,\w}g(x)= \varepsilon^{-\alpha} \LL^{\w} g^{(\varepsilon)}(x/\varepsilon).
$$

For any $\lambda>0$ and $f\in C_c (\R^d)$, let $u_f^{\e,\w}$ be the solution to
$$
 (\lambda -\LL^{\e,\w} ) u_f^{\e,\w}  =f
$$
 in $L^2(\R^d;\mu^{\e,\w}(dx))$.
The main goal of homogenization problem in our paper is to show, under suitable conditions, that almost surely,
$u_f^{\e,\w}$ converges to a deterministic function $u_f$ as $\e \to 0$
for every $f\in C_c(\R^d)$,
and that $u_f$ is the solution of
$$
 (\lambda -\LL ) u_f =f ,
$$
where $\LL$ is the $L^2$-generator of certain regular symmetric Dirichlet form $(\E,\F)$ on $L^2(\R^d;dx)$ whose jumping kernel can be degenerate.
The convergence of
$u_f^{\e,\w}\to u_f$ as $\e\to0$ is in the resolvent topology; that is,
for a.s.\ $\w\in \Omega$,
$$
\lim_{\e\to0}\|u_f^{\e,\w}-u_f\|_{L^2(\R^d;\mu^{\e,\w}(dx))}=0.
$$
See \cite{MT, T} for background and \cite{BGG, BRS, KPZ} for recent study
 on homogenization problems related to non-local operators.

Let $ K(z)$ be a non-negative bounded
measurable even function on $\R^d$ (that is, $K(z)=K(-z)$ for all $z\in \R^d)$.
Define a regular Dirichlet form $(\E^K,\F^K)$ on $L^2(\R^d;dx)$
by
\begin{equation}\label{eq:DeDFK}\begin{split}
{ \EE}^{K}(f,g)&=\frac{1}{2}\iint_{\R^d\times\R^d\setminus \Delta} (f(x)-f(y))(g(x)-g(y))
\frac{K(x-y)}{|x-y|^{d+\alpha}}
\I_{\{ x-y\in \Gamma\}}
\,dx\,dy,\quad f,g\in \F^K,
\end{split}\end{equation}
where $\F^K$ is
 the closure of $C_c^1(\R^d)$ in  with respect to the norm
$\big(\E^K(\cdot,\cdot)+\|\cdot\|_{L^2(\R^d;dx)}^2\big)^{1/2}.$
The limiting Dirichlet form $(\E,\F)$ for homogenization problem considered in this paper is of this type. We emphasis that
the symmetric
cone
$\Gamma$ in \eqref{eq:niebiws} can be a proper
subset of $\R^d$
in our paper.

\subsection{Main results}
Our main results are divided into two cases, according to the explicit form of the coefficient $\kappa(x,y;\w)$ in \eqref{e:df}. The first one is concerned on the case that $\kappa(x,y;\w)$ is of the summation form, and
the second one on the case that
is of a
 product form.

\subsubsection{{\bf$\kappa(x,y;\w)$ of  summation form}}

To state the statement of this part, we need the
following assumption
${\bf(A)}$ for $\kappa (x,y;\omega)$.

\medskip

\noindent {\it{\bf (A1)} For a.s.\ $\w\in \Omega$,
\begin{equation}\label{assu-1}
\kappa (x,y;\w)=\nu(y-x;\tau_x\w)+\nu(x-y;\tau_y\w)\quad \hbox{for every  } x,y\in \R^d,
\end{equation}
where $\nu:\R^d\times \Omega \rightarrow [0,\infty)$ is a measurable random function
such that
\begin{itemize}
\item[(i)]
There is a constant $l>0$ such that for any $n>0$ and $x, z_1,z_2\in \R^d$,
\begin{equation}\label{a2-1-1}\begin{split}
|{\rm Cov} \left(\nu_n(z_1; \cdot), \nu_n(z_2; \tau_x (\cdot))\right)|
:&= \big|\Ee\left[ \nu_n(z_1;\cdot)\cdot\nu_n(z_2;\tau_x(\cdot))\right]-\Ee[
\nu_n(z_1;\cdot)] \Ee [\nu_n(z_2;\cdot)] \big|\\
&\le
C_1(n)
\big(1\wedge |x|^{-l}\big),
\end{split}
\end{equation}
where $\nu_n=\nu\wedge n$ and $C_1(n)$ is a positive constant depending on $n$.
\item[(ii)] There is a non-negative
 measurable
function $\bar \nu$ on $\R^d$ such that $\Ee[\nu({z}/\e;\cdot)]$ converges weakly to $\bar \nu(z)$ in $L_{loc}^1(\R^d;dx)$ as $\w\to0$; that is,
\begin{equation}\label{a2-1-2a}
\lim_{\e\to0}\int_{\R^d}h(z)\Ee[\nu({z}/\e;\cdot)]\,dz=\int_{\R^d} h(z)\bar \nu(z)\,dz\quad \hbox{for every  } h\in L_{loc}^\infty(\R^d;dx).
\end{equation}\end{itemize}

\medskip

\noindent{\bf(A2)} There are non-negative random variables $\Lambda_1\le
\Lambda_2$ on $(\Omega, \mathcal{F}, \Pp)$ with
\begin{equation}\label{t2-1-1}
 \Ee[ \Lambda_1^{-1}+\Lambda_2^{p}
] <\infty,
 \end{equation}
 for some constant  $p>1$ so that
for a.s.\ $\w\in
\Omega$,
\begin{equation}\label{a2-2-1}
 \Lambda_1(\tau_x\w)+\Lambda_1(\tau_y \w)\le \kappa (x,y;\w)\le \Lambda_2(\tau_x\w)+\Lambda_2(\tau_y\w)\quad \hbox{for every  } x,y\in \R^d.
 \end{equation}

 \medskip

\noindent{\bf(A2')} There are non-negative random variables $\Lambda_1\le
\Lambda_2$ on $(\Omega, \mathcal{F}, \Pp)$ so that \eqref{a2-2-1} holds for a.s.\ $\w\in \Omega$, and that
$$
 \Ee[ \Lambda_1^{-1}] <\infty.
$$}

It is obvious that condition {\bf (A2')} is weaker than condition {\bf (A2)}.
Here are
some comments on assumption {\bf(A)}.

\begin{remark}\label{R:1.2}  \rm \begin{itemize}
\item[(i)] It is easy to see that any $\kappa(x, y; \w)$ of form \eqref{assu-1} enjoys the property \eqref{eq:shifterc}.
On the contrary, any $\kappa (x, y; \w)$ satisfying  \eqref{eq:shifterc} admits a representation of the form
\eqref{assu-1}.
This is because   $\kappa (x, y; \w)= \kappa (0, y-x; \tau_x \w)$ and so by the symmetry
of $\kappa (x, y; \w)$ in $(x, y)$ we have
$$
\kappa (x, y; \w)=\tfrac12 ( \kappa (x, y;\w)+ \kappa (y, x; \w)) =\tfrac12 ( \kappa (0, y-x; \tau_x \w) + \kappa (0, y-x; \tau_y \w)).
$$
Hence we can write $\kappa (x, y; \w)$ as
$$
\kappa (x, y; \w) = \nu(y-x; \tau_x \w) + \nu (x-y; \tau_y \w),
$$
where
\begin{equation}\label{e:1.15}
\nu(x;\w):=  \kappa (0, x; \w) /2 .
\end{equation}
Thus \eqref{eq:shifterc} is a symmetrized and long-range analogy of
nearest neighborhood random walk models with balanced random conductance;
 see e.g. \cite{BD, DG,DGR,GZ}.
 Representing  $\kappa (x, y; \w)$  via \eqref{assu-1} by a general $\nu(z; \w)$ rather than that of \eqref{e:1.15}
 allows more flexibility in satisfying the mixing condition \eqref{a2-1-1} on $\nu_n=\nu\wedge n$.

\item[(ii)]
Unlike elliptic differential operators, we have a
variable $(y-x)/\e$ by shifting operators $\tau_{x/\e}$ and $\tau_{y/\e}$  in the coefficient
$$
\kappa (x/\e, y/\e; \w)= \kappa (0, (y-x)/\e; \tau_{x/\e} \w) = \kappa (0, (x-y)/\e; \tau_{y/\e} \w)
$$
 of the  scaled
process $X^\e$ which corresponds to the long
range property of the jumping kernel (see \eqref{eq:niebiws}).
This prevents
us to directly applying the ergodic theorem to deduce the almost sure
convergence as indicated below. We thus assume some
kind of mixing condition \eqref{a2-1-1} on $\nu_n$, uniformly
in $z_1, z_2\in \R^d$,
to guarantee this convergence.
Similar assumption
(without the variable
$z_i \in \R^d$ and on $\nu$ itself)
 has been used in \cite[Assumption A3]{JR} to establish the
quenched
functional central limit
theorem for random walks on $\R^d$ where
the random
environment is i.i.d.\
in time and polynomially mixing in space, and in \cite[Assumption
A5]{An}
in the study of
 invariance principle for diffusions in time-space
ergodic random environment. We mention that \eqref{a2-1-1} includes the
so-called \lq\lq unit range of dependence\rq\rq\  condition
used
in \cite[p.\ xii, (0.6)]{AMK}.

\item[(iii)] Suppose that \eqref{a2-1-1} holds with $C^*_1:=\limsup_{n\to\infty}C_1(n)<\infty$. Then, for any $x,z_1,z_2\in \R^d$,
$$
|{\rm Cov} (\nu(z_1; \cdot), \nu(z_2, \tau_x (\cdot)))|
 \le
C^*_1
\big(1\wedge |x|^{-l}\big).
$$
This, along with the symmetry of
of $\kappa (x, y; \w)$ in $(x, y)$, yields that for every $x_1, y_1, x_2, y_2 \in \R^d$,
 \begin{equation}\label{e:1.13} {\rm Cov} (\kappa (x_1,y_1;\cdot), \kappa (x_2,y_2;\cdot) ) \leq  4C^*_1 \Big(1\wedge \big(
 |x_1-x_2|\wedge|x_1-y_2|\wedge|y_1-x_2|\wedge|y_1-y_2|\big)^{-l} \Big).\!\!\!\!\!\!\!\!\!\!
 \end{equation}
 Thus, the mixing condition \eqref{e:1.13} is
weaker than the
mutually independent stable-like random conductance models
investigated in \cite{CKK,CKW}. In details, \eqref{e:1.13} only requires
the mixing condition on the position variable $x$, not on the jumping size variable $z$; while in \cite{CKK,CKW}
the mutual independence is
imposed
 on both variables $x$ and $z$, which was crucial  to verify
{(A4*) (ii)} in \cite{CKK} (see also \cite[Section 4]{CKW}). In some sense the mutually independent assumption adopted in \cite{CKK,CKW}
corresponds
to the following mixing condition:
there are constants $l,C_2>0$ so that for every $x_1,x_2,y_1,y_2\in \R^d$,
$$
{\rm Cov} (\kappa (x_1,y_1;\cdot), \kappa (x_2,y_2;\cdot) )
\le  C_2 \Big(1\wedge \big(|x_1-x_2|+|y_1-y_2|)\wedge (|x_1-y_2|+|x_2-y_1|)\big)^{-l}\Big)
$$
which is stronger than \eqref{e:1.13}.
Since in the present paper
the mixing condition is of the weaker form \eqref{a2-1-1} and acting on $\nu_n$ instead of $\nu$,
the arguments in \cite{CKK,CKW} does not work.
We use a different approach in this paper
to deal with the homogenization problem.

\item[(iv)] It follows from \eqref{a2-1-2a} that $\bar \nu (z)$ is a radial process; that is, $\bar \nu (\lambda z)=\bar \nu  (z)$ for any $\lambda >0$.
Moreover, under condition {\bf(A2)}, there are positive constants $C_3\leq C_4$ so that
$$  C_3\leq \bar \nu(z) +\bar \nu(-z) \leq C_4 \quad \hbox{for all } z\in \R^d.
$$ Note also that in our setting we always assume that \eqref{eq:domain} holds true.
In fact,  \eqref{eq:domain} is a consequence of assumption {\bf (A2)}. Indeed, suppose  {\bf (A2)} holds.
Then by the Fubini theorem, for any $R\geq 1$,
\begin{align*}
  \quad\quad \bE \left( \int_{B(0, R)}\! \int_{\R^d} (1\wedge |z|^2) \frac{\kappa (x, x+z; \w )}{|z|^{d+\alpha}}\, dz \, dx\right)
&\leq \! \int_{B(0, R)}  \!\int_{\R^d} (1\wedge |z|^2) \frac{\bE [ \Lambda_2 (\tau_x (\cdot) )] + \Ee[\Lambda_2 (\tau_{x+z}(\cdot)))]}{|z|^{d+\alpha}} \,dz \, dx\\
&\leq  2\bE [ \Lambda_2 ] \int_{B(0, R)}  \int_{\R^d} \frac{1\wedge |z|^2}{|z|^{d+\alpha}}\, dz \, dx <\infty.
\end{align*}
In particular, we have $\bP$-a.s.,  $$\int_{B(0, R)} \int_{\R^d} (1\wedge |z|^2) \frac{\kappa (x, x+z; \w )}{|z|^{d+\alpha}} \,dz \,dx <\infty$$
for every $R>0$.
\end{itemize}
\end{remark}

\begin{theorem}\label{thm:mainth1}
Suppose that {\bf (A1)} and {\bf (A2)}
hold, and that $\bE [ \mu^p]<\infty$ for some $p>1$.
 For $\e>0$, let $U_\lambda^{\e,\w}$ be the $\lambda$-order resolvent of the Dirichlet form $(\EE^{\e,\w}, \FF^{\e,\w})$ given by \eqref{eq:niebiws}.
There is $\Omega_0\subset \Omega$ of full probability measure  so that for every $\w \in \Omega_0$,  every $\lambda>0$ and $f\in C_c (\R^d)$,
$$U_\lambda^{\e,\w}f \hbox{ converges to } U^K_\lambda f \hbox{ in }L^1(B(0,r); dx)\quad\hbox{   as } \e\to0 $$ for any $r>1$, and
\begin{equation}\label{e:1.17a}
\lim_{\e \to 0} \| U_\lambda^{\e,\w}f - U^K_\lambda f\|_{L^2(\R^d; \mu^{\e, \w} )}=0,
\end{equation}
where $U^K_\lambda $ is the $\lambda$-order resolvent of the symmetric Dirichlet form $({ \EE}^{K},\F^K)$
on $L^2(\R^d; dx)$ given by \eqref{eq:DeDFK} with
$$
K(z) :=\bar \nu(z)+\bar \nu(-z).
$$
\end{theorem}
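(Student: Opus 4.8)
The plan is to deduce Theorem~\ref{thm:mainth1} from the generalized (i.e.\ $\e$‑dependent reference measure) Mosco convergence of the Dirichlet forms $(\EE^{\e,\w},\FF^{\e,\w})$ on $L^2(\R^d;\mu^{\e,\w})$ to $(\EE^{K},\F^K)$ on $L^2(\R^d;dx)$, which by the Kuwae--Shioya theory of convergence of Hilbert spaces and quadratic forms yields the strong resolvent convergence \eqref{e:1.17a}. Two preliminary facts are recorded first. (a) \emph{A priori bounds}: $u^{\e,\w}:=U^{\e,\w}_\lambda f$ is the unique minimizer of $v\mapsto\EE^{\e,\w}(v,v)+\lambda\|v\|_{L^2(\mu^{\e,\w})}^2-2\langle f,v\rangle_{L^2(\mu^{\e,\w})}$, so testing against $u^{\e,\w}$ gives $\EE^{\e,\w}(u^{\e,\w},u^{\e,\w})+\lambda\|u^{\e,\w}\|_{L^2(\mu^{\e,\w})}^2\le\lambda^{-1}\|f\|_{L^2(\mu^{\e,\w})}^2$, and since the associated semigroups are sub‑Markovian, $\|u^{\e,\w}\|_{\infty}\le\lambda^{-1}\|f\|_{\infty}$ uniformly in $\e$. (b) \emph{Ergodic averaging of the reference measure}: by the spatial ergodic theorem applied to $\mu$ and to $\mu^{p}$ (here $\Ee[\mu]=1$ and $\Ee[\mu^{p}]<\infty$, $p>1$, are used) there is a set of full probability on which $\mu^{\e,\w}(dx)=\mu(\tau_{x/\e}\w)\,dx$ converges vaguely to $dx$ and $\|\phi\|_{L^2(\mu^{\e,\w})}\to\|\phi\|_{L^2(dx)}$, $\|f\|_{L^2(\mu^{\e,\w})}\to\|f\|_{L^2(dx)}$ for $\phi\in C_c^1(\R^d)$, $f\in C_c(\R^d)$; this fixes the identification of the spaces $L^2(\mu^{\e,\w})$ with $L^2(\R^d;dx)$ in the Kuwae--Shioya sense and makes the bound in (a) uniform in $\e$.

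The core of the argument is the convergence, for a.e.\ $\w$ and every $\phi\in C_c^1(\R^d)$, of the quadratic forms $\EE^{\e,\w}(\phi,\phi)\to\EE^{K}(\phi,\phi)$. Using \eqref{assu-1}, the symmetry of $\kappa$ and $\Gamma=-\Gamma$, after the substitution $z=y-x$ the two summands in \eqref{assu-1} contribute equally and
\[
\EE^{\e,\w}(\phi,\phi)=\iint_{\{z\in\Gamma\}}\frac{(\phi(x)-\phi(x+z))^2}{|z|^{d+\alpha}}\,\nu\!\left(\tfrac{z}{\e};\tau_{x/\e}\w\right)dx\,dz .
\]
Here both $z/\e$ and $x/\e$ diverge as $\e\to0$, so Birkhoff's theorem cannot be applied directly --- this is exactly the role of the mixing hypothesis \eqref{a2-1-1}. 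I would proceed by: (i) truncating $\nu$ at level $n$; using $\nu(z;\w)\le\Lambda_2(\w)+\Lambda_2(\tau_z\w)$ (a consequence of \eqref{assu-1} and \eqref{a2-2-1}) and the ergodic theorem applied to $(\Lambda_2-n/2)_+$, the truncation error is a.s.\ bounded as $\e\to0$ by a constant multiple of $\Ee[(\Lambda_2-n/2)_+]$, which tends to $0$ as $n\to\infty$ since $\Ee[\Lambda_2]<\infty$; (ii) discarding the ranges $|z|<\delta$ and $|z|>R$, whose contributions are, uniformly in small $\e$, dominated respectively by $\|\nabla\phi\|_{\infty}^2\int_{|z|<\delta}|z|^{2-d-\alpha}dz$ and $\|\phi\|_2^2\int_{|z|>R}|z|^{-d-\alpha}dz$ times spatial averages of $\Lambda_2$ (convergent by Birkhoff), hence small for $\delta$ small and $R$ large; (iii) on the compact shell $\{\delta\le|z|\le R\}\cap\Gamma$, bounding by stationarity and \eqref{a2-1-1} the variance of the truncated, restricted functional by $C_1(n)\iint_{B\times B}(1\wedge|(x-x')/\e|^{-l})\,dx\,dx'$, which tends to $0$ as $\e\to0$ and is summable along a fixed polynomially decaying sequence $\e_k\downarrow0$, so the fluctuation about the mean vanishes a.s., while the mean converges to its $\bar\nu$‑analogue by \eqref{a2-1-2a} applied to the bounded compactly supported function of $z$ obtained after integrating in $x$. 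Letting $\delta\to0$, $R\to\infty$, $n\to\infty$ and symmetrizing the limit (again using $\Gamma=-\Gamma$) identifies it with $\tfrac12\iint_{\{z\in\Gamma\}}|z|^{-d-\alpha}(\phi(x)-\phi(x+z))^2(\bar\nu(z)+\bar\nu(-z))\,dx\,dz=\EE^{K}(\phi,\phi)$ for $K=\bar\nu(\cdot)+\bar\nu(-\cdot)$. The same variance estimate, with one weakly convergent factor, gives the bilinear form convergence $\EE^{\e,\w}(g_\e,\phi)\to\EE^{K}(g,\phi)$ whenever $g_\e\to g$ weakly in $L^2_{loc}$ and $\phi\in C_c^1(\R^d)$.

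Mosco convergence then follows in two parts. For the recovery ($\limsup$) part, for $g\in C_c^1(\R^d)$ take the constant sequence $g_\e\equiv g$, which converges strongly by fact (b) and satisfies $\EE^{\e,\w}(g,g)\to\EE^{K}(g,g)$ by the previous step; a density plus diagonal argument extends this to every $g\in\F^K$. For the $\liminf$ part, if $g_\e\to g$ weakly with $\sup_\e\EE^{\e,\w}(g_\e,g_\e)<\infty$, the lower bound $\kappa(x/\e,y/\e;\w)\ge\Lambda_1(\tau_{x/\e}\w)+\Lambda_1(\tau_{y/\e}\w)$ together with $\Ee[\Lambda_1^{-1}]<\infty$ yields, via a weighted fractional Sobolev/compactness estimate, uniform control of an unweighted fractional seminorm of $g_\e$ on compacts, so that $g\in\F^K$; and the Cauchy--Schwarz inequality $\EE^{\e,\w}(g_\e,g_\e)^{1/2}\EE^{\e,\w}(\phi,\phi)^{1/2}\ge\EE^{\e,\w}(g_\e,\phi)$ combined with the bilinear convergence gives $\liminf_\e\EE^{\e,\w}(g_\e,g_\e)\ge\EE^{K}(g,\phi)^2/\EE^{K}(\phi,\phi)$ for all $\phi\in C_c^1(\R^d)$, whence $\ge\EE^{K}(g,g)$ after optimizing over $\phi$. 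By the Kuwae--Shioya theorem this gives strong resolvent convergence, i.e.\ \eqref{e:1.17a}. The $L^1(B(0,r);dx)$ convergence needs an extra step, since the densities $\mu(\tau_{x/\e}\w)$ may be arbitrarily small and no bound on $\Ee[\mu^{-1}]$ is assumed: combining $\|u^{\e,\w}\|_{\infty}\le\lambda^{-1}\|f\|_{\infty}$ with the energy bound and the same degenerate‑weight compactness shows $\{u^{\e,\w}\}_{\e}$ is relatively compact in $L^1_{loc}(\R^d;dx)$, any subsequential limit is identified as the unique solution $U^K_\lambda f$ through the energy method using the bilinear and ergodic convergences, and hence the whole family converges. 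The set $\Omega_0$ is a countable intersection of the full‑measure sets coming from the ergodic theorems (which hold for all $\e$ at once), from the fluctuation bound along the sequence $\e_k$, and from a countable dense family of $(\lambda,f)$; the passage from $\e_k\to0$ to the continuous limit $\e\to0$ is effected by this compactness‑and‑uniqueness scheme.

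The step I expect to be the main obstacle is the quadratic‑form convergence of the second paragraph: one must reconcile the two‑scale structure of $\kappa(x/\e,y/\e;\w)$ (unavoidable for long‑range kernels, as explained in Remark~\ref{R:1.2}(ii)) with the facts that the mixing condition \eqref{a2-1-1} is available only for the truncations $\nu_n$ and only in the position variable, and that the coefficients lie merely in $L^1$ and may be degenerate --- which forces the truncation and tail estimates, as well as the compactness arguments used for the $\liminf$ inequality and for the $L^1_{loc}$ statement, to be made quantitative and uniform in $\e$.
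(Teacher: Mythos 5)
Your overall architecture is sound and largely recovers the paper's Appendix route rather than the main-text one: the paper proves Theorem~\ref{thm:mainth1} through Theorems~\ref{T:2.2} and~\ref{C:note3}, which are direct resolvent computations (a priori energy bound, $L^1_{\rm loc}$ precompactness, passage to the limit in the variational identity $\lambda\langle u^\e,g\rangle+\EE^\e(u^\e,g)=\langle f,g\rangle$, then an energy identity for the strong $L^2$-convergence), and then verifies in Section~\ref{S:3} that \textbf{(A)} implies the working hypothesis \textbf{(H)}. The Kuwae--Shioya/Mosco-with-changing-measures argument you develop is presented in the paper only as an \emph{alternative} proof of the strong $L^2$ resolvent convergence (Theorem~\ref{T:mos} and Corollary~\ref{C:or} in the appendix). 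Both routes need the same hard ingredients, and there you are on target: the $L^1_{\rm loc}$ precompactness from the degenerate lower bound (Proposition~\ref{P:compact}, proved via a fractional difference-quotient estimate as you describe), and the convergence of the quadratic and bilinear forms for fixed $\phi\in C_c^1$ (Proposition~\ref{P:3.1}). Your direct second-moment computation of ${\rm Var}\bigl(\iint F\,\nu_n(z/\e;\tau_{x/\e}\cdot)\,dz\,dx\bigr)\lesssim C_1(n)\iint_{B\times B}(1\wedge|(x-x')/\e|^{-l})\,dx\,dx'\lesssim\e^{\,d\wedge l}$ is, by the way, somewhat cleaner than the paper's block decomposition into sub-cubes $Q_x^m$, and the truncation-plus-tail strategy (small $|z|$, large $|z|$, truncate $\nu$ at level $n$, apply the maximal ergodic theorem to control the tails of $\Lambda_2$) matches the proofs of \eqref{e:2.3}--\eqref{e:2.3-0} and Proposition~\ref{P:3.1}.

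Two points, however, are genuine gaps in the sketch. First, the $\liminf$ half of Mosco. You claim that ``the same variance estimate, with one weakly convergent factor, gives the bilinear form convergence $\EE^{\e,\w}(g_\e,\phi)\to\EE^K(g,\phi)$ whenever $g_\e\to g$ weakly in $L^2_{loc}$.'' That is not enough: the operator $\LL^{n}_{\eta}\phi$ only converges weakly, and weak-against-weak does not pass to the limit. What the paper actually does in the appendix (Theorem~\ref{T:mos}) is to replace $g_\e$ by the truncations $g_\e^N:=(-N)\vee(g_\e\wedge N)$, use the Markov property $\EE^{\e,\w}(g_\e^N,g_\e^N)\le\EE^{\e,\w}(g_\e,g_\e)$, invoke the $L^1_{\rm loc}$ precompactness (which requires the uniform $\|g_\e^N\|_\infty\le N$ bound and does \emph{not} apply to $g_\e$ itself) to upgrade to strong $L^q(B(0,r))$ convergence for all $q<\infty$, pair this strong convergence with the uniform $L^p$ bound on $\kappa(x/\e,y/\e;\w)$ from \textbf{(H)}(iii), and only then send $N\to\infty$. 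Without this truncation step your bilinear convergence is not justified, and the Cauchy--Schwarz/optimization step that follows (which is essentially the paper's Remark~\ref{R:4.2}(i)) inherits the gap.

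Second, the passage from the discrete scales to continuous $\e\to 0$. Your a.s.\ convergence of the quadratic form comes from Borel--Cantelli applied to the variance bound along a fixed sequence $\e_k\downarrow0$. You say this is repaired by ``compactness and uniqueness,'' but that mechanism identifies subsequential limits of the \emph{resolvents}; it does not upgrade the a.s.\ convergence of the real-valued random quantity $\iint F\,\nu_n(z/\e;\tau_{x/\e}\w)\,dz\,dx$ from a subsequence to all $\e\to 0$. Since the Mosco recovery sequence needs $\lim_{\e\to0}\EE^{\e,\w}(g,g)=\EE^K(g,g)$ for continuous $\e$, and the direct resolvent argument needs \textbf{(H)}(iv) for continuous $\e$, an interpolation in $\e$ is required. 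The paper achieves it by working at scales $\delta^{-m}$ for $\delta>1$, showing that the discrepancy between $\e\in(\delta^{-m},\delta^{-(m-1)}]$ and $\delta^{-m}$ is controlled by a quantity of order $\delta^{2d}-1$ times a.s.\ bounded ergodic averages, and finally sending $\delta\to1$ along a countable family (see the terms $I_2^m$, $I_3^m$ and the final $\limsup_{m}\sum I_i^m\le c(k)(\delta^{2d}-1)$). You should insert an analogous sandwiching step after your Borel--Cantelli argument; the scaling identity $\Phi(\e)=\e^{2d}\iint F(\e x,\e z)\nu_n(z;\tau_x\w)\,dz\,dx$ together with the a.s.\ boundedness of $\e^d\int_{B(0,R/\e)}\Lambda_2(\tau_x\w)\,dx$ furnishes the needed comparison.

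Aside from these two points, the remaining steps you outline (the a priori $L^\infty$ and energy bounds for the resolvent, the vague convergence of $\mu^{\e,\w}$ to Lebesgue measure by Birkhoff, the identification of the limit by testing against $C_c^\infty$, the use of $\Gamma=-\Gamma$ to symmetrize $\bar\nu(z)+\bar\nu(-z)$ into $K$, and the countable-intersection construction of $\Omega_0$) all line up with the paper's argument.
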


Clearly, by taking the smaller one, we can   assume  $p>1$ in the condition $\bE [ \mu^p]<\infty$ is the same as the $p>1$ in {\bf (A2)}.
Note that by Remark \ref{R:1.2}(iv), $K(z)$ is a    radial even function on $\R^d$ that is bounded between two positive constants.
So the limiting Dirichlet form $({ \EE}^{K},\F^K)$ is that of a symmetric, but not necessary rotationally symmetric
 $\alpha$-stable process on $\R^d$. Since for any $g\in C^1_c(\R^d)$,
 \begin{align*}
 &\EE^{\e, \w} (U_\lambda^{\e,\w}f , g)+ \lambda \< U_\lambda^{\e,\w}f , g\>_{L^2(\R^d; \mu^{\e, \w}(dx))} =\< f, g\>_{L^2(\R^d; \mu^{\e, \w}(dx))} ,\\
  & \EE^{K} (U^K_\lambda f , g)+ \lambda \< U_\lambda^{K}f , g\>_{L^2(\R^d;  dx)} =\< f, g\>_{L^2(\R^d;  dx)},
 \end{align*}
 and by the Birkhoff ergodic theorem (see Proposition \ref{P:2.1} below),
 $$
 \lim_{\e\to 0} \< U^K_\lambda f, g\>_{L^2(\R^d; \mu^{\e, \w}(dx))} = \< U^K_\lambda f, g\>_{L^2(\R^d; dx)}
  \quad \hbox{and} \quad
 \lim_{\e\to 0} \< f, g\>_{L^2(\R^d; \mu^{\e, \w}(dx))} = \< f, g\>_{L^2(\R^d; dx)} ,
  $$
 we conclude from \eqref{e:1.17a} that
 $$
 \lim_{\e\to 0} \EE^{\e, \w} (U_\lambda^{\e,\w}f , g) = \EE^{K} (U^K_\lambda f , g)
 \quad \hbox{for every } g\in C^1_c(\R^d).
 $$

\subsubsection{{\bf$\kappa(x,y;\w)$ of product form}}

Motivated by \cite[(Q1)]{KPZ}, we next consider the
the case where  the coefficient $\kappa (x,y;\w)$ of \eqref{eq:shifterc}
is of  a product form.
(We note that \cite[(Q2)]{KPZ} is essentially an approximation of \cite[(Q1)]{KPZ}, under some additional assumptions on
the random environment $(\Omega,\FF,\Pp)$ and on the coefficient $\kappa (x,y;\w)$ of the jumping kernel.)
We consider the following Assumption ${\bf(B)}$.

\medskip

\noindent  {\bf (B1)}
{\it For a.s. $\w\in \Omega$,
\begin{equation}\label{e:1.17}
\kappa (x,y;\w)=\nu_1(\tau_x\w)\nu_2(\tau_y\w) +\nu_1(\tau_y\w)\nu_2(\tau_x\w)\quad \hbox{for every } x,y\in \R^d,
\end{equation} where $\nu_1$ and $\nu_2$ are non-negative random variables on $(\Omega, \FF,\Pp)$.
}

\noindent  {\bf (B2)}
{\it There are non-negative random variables $\Lambda_1 \leq \Lambda_2$ on $(\Omega, \FF, \bP)$
so that for a.s. $\w\in \Omega$,
\begin{equation}\label{a2-2-2}
\Lambda_1(\tau_x\w)\Lambda_1(\tau_y\w)\le \kappa (x,y;\w) \le \Lambda_2(\tau_x\w)\Lambda_2 (\tau_y\w)
\quad \hbox{for every } x,y\in \R^d,
\end{equation}
and that
\begin{equation}\label{t2-1-**}
 \Ee \left[\Lambda_1^{-1}+\Lambda_2^{2}  \right] <\infty.
\end{equation}
}

\noindent  {\bf (B2')}
{\it There are non-negative random variables $\Lambda_1 \leq \Lambda_2$ on $(\Omega, \FF, \bP)$
so that \eqref{a2-2-2} holds for a.s.\ $\w\in \Omega$, and that
$$
 \Ee \left[\Lambda_1^{-1} \right] <\infty.
$$ }

Clearly condition {\bf (B2)} is stronger than  condition {\bf (B2')}.
Conditions {\bf (B2)} and {\bf (B2')} are equivalent to the following, whose proof is postponed into the appendix of this paper.

\begin{proposition}\label{P:1.4}
Suppose that $\kappa (x, y; \w)$ is given by \eqref{e:1.17} for some non-negative random variables
$\nu_1$ and $\nu_2$ on $(\Omega,\FF,\Pp)$.
Then condition {\bf (B2')} holds if and only if $\Ee \left[  (\nu_1 \nu_2)^{-1/2} \right]<\infty$, and condition {\bf (B2)} holds if and only if
\begin{equation}\label{e:1.16}
\Ee \left[  (\nu_1 \nu_2)^{-1/2} + (\nu_1 + \nu_2)^{2} \right] <\infty.
\end{equation}
\end{proposition}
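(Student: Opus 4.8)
The plan is to treat both equivalences at once: in the sufficiency directions I exhibit explicit envelopes $\Lambda_1\le\Lambda_2$, and in the necessity directions I read off the integrability by substituting special arguments into \eqref{a2-2-2} and, for {\bf (B2)}, by invoking ergodicity.

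\emph{Sufficiency.} Given the product form \eqref{e:1.17}, set $\Lambda_1:=\sqrt 2\,(\nu_1\nu_2)^{1/2}$ and $\Lambda_2:=\nu_1+\nu_2$. The elementary inequalities $2ab\le(a+b)^2$ and $ad+bc\le(a+b)(c+d)$ for $a,b,c,d\ge0$ give $\Lambda_1\le\Lambda_2$ and the right-hand inequality of \eqref{a2-2-2}, while applying the arithmetic--geometric mean inequality to the two summands of \eqref{e:1.17} gives $\kappa(x,y;\w)\ge 2\sqrt{(\nu_1\nu_2)(\tau_x\w)}\,\sqrt{(\nu_1\nu_2)(\tau_y\w)}=\Lambda_1(\tau_x\w)\Lambda_1(\tau_y\w)$, which is the left-hand inequality of \eqref{a2-2-2}. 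Since $\Ee[\Lambda_1^{-1}]=2^{-1/2}\,\Ee[(\nu_1\nu_2)^{-1/2}]$ and $\Ee[\Lambda_2^2]=\Ee[(\nu_1+\nu_2)^2]$, the assumption $\Ee[(\nu_1\nu_2)^{-1/2}]<\infty$ produces {\bf (B2')} and the assumption \eqref{e:1.16} produces {\bf (B2)}.

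\emph{Necessity.} Suppose {\bf (B2')} holds with some $\Lambda_1\le\Lambda_2$. Evaluating the left-hand inequality of \eqref{a2-2-2} at $x=y=0$, where $\kappa(0,0;\w)=2\nu_1(\w)\nu_2(\w)$ by \eqref{e:1.17}, gives $\Lambda_1(\w)^2\le2(\nu_1\nu_2)(\w)$, hence $(\nu_1\nu_2)^{-1/2}\le\sqrt 2\,\Lambda_1^{-1}$ almost surely and $\Ee[(\nu_1\nu_2)^{-1/2}]<\infty$; this is also half of the necessity for {\bf (B2)}. It remains to deduce $\Ee[(\nu_1+\nu_2)^2]<\infty$ from {\bf (B2)}, and this is the only non-routine step: the difficulty is that the upper bound in \eqref{a2-2-2}, restricted to the diagonal, controls only the product $\nu_1\nu_2$ and not $\nu_1,\nu_2$ separately, so the two factors must be decoupled probabilistically. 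The plan is as follows. Since $\Ee[\Lambda_1^{-1}]<\infty$ together with $\Lambda_1^2\le2\nu_1\nu_2$ force $\nu_1>0$ and $\nu_2>0$ a.s., while $\Ee[\Lambda_2^2]<\infty$ forces $\Lambda_2<\infty$ a.s., the events $A_n:=\{\nu_1\wedge\nu_2\ge1/n\}\cap\{\Lambda_2\le n\}$ increase to a set of full measure, and we fix $n_0$ with $\Pp(A_{n_0})>0$. The event $\{\w:\mathrm{Leb}\{y\in\R^d:\tau_y\w\in A_{n_0}\}>0\}$ is invariant under all $\tau_z$ and, by Fubini together with $\Pp(A_{n_0})>0$, has positive probability, hence full probability by ergodicity (alternatively invoke the multidimensional ergodic theorem, Proposition \ref{P:2.1}); thus for a.s.\ $\w$ there is $y_0=y_0(\w)\in\R^d$ with $\tau_{y_0}\w\in A_{n_0}$. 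Substituting $(x,y)=(0,y_0(\w))$ into the upper bound of \eqref{a2-2-2} and using $\nu_1(\tau_{y_0}\w)\wedge\nu_2(\tau_{y_0}\w)\ge1/n_0$ and $\Lambda_2(\tau_{y_0}\w)\le n_0$ yields $\nu_1(\w)\le n_0^2\,\Lambda_2(\w)$ and $\nu_2(\w)\le n_0^2\,\Lambda_2(\w)$ for a.s.\ $\w$, whence $(\nu_1+\nu_2)^2\le 4n_0^4\,\Lambda_2^2$ a.s.\ and $\Ee[(\nu_1+\nu_2)^2]\le4n_0^4\,\Ee[\Lambda_2^2]<\infty$. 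The identification and exploitation of the ``good'' event $A_{n_0}$ via ergodicity is the crux; the rest is a bookkeeping of elementary inequalities.
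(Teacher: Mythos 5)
Your argument is correct and follows essentially the same strategy as the paper's: the sufficiency via the explicit envelopes $\Lambda_1=\sqrt{2\nu_1\nu_2}$, $\Lambda_2=\nu_1+\nu_2$; the lower-moment bound by evaluating \eqref{a2-2-2} on the diagonal; and the key upper-moment step by using ergodicity to locate, for a.e.\ $\w$, a point $y_0$ at which $\nu_1,\nu_2,\Lambda_2$ are all comparable to constants, then plugging $(0,y_0)$ into the upper bound to get $\nu_1(\w)+\nu_2(\w)\lesssim\Lambda_2(\w)$. The only notable (and slightly favorable) deviation is in how you invoke ergodicity: the paper works with the set $A=\{\w:\tau_x\w\in\Omega_1\ \text{for some}\ x\}$, an uncountable union whose $\mathcal{F}$-measurability is not discussed, whereas you pass to $\{\w:\mathrm{Leb}\{y:\tau_y\w\in A_{n_0}\}>0\}$, which is manifestly measurable by the joint measurability hypothesis (iii) and Fubini; this sidesteps a small technical gap but does not change the substance of the proof.
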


\begin{remark}\label{remark-add} Following the argument in Remark \ref{R:1.2}(iv) and using the elementary inequality $ab\le (a^2+b^2)/2$ for $a,b\ge0$, we can easily verify that \eqref{eq:domain} is a consequence of assumption {\bf (B2)}.
 Moreover, if \eqref{a2-2-2} holds and the function  $x\mapsto \Lambda_2(\tau_x\w)$ is locally bounded for a.s.\ $\w\in \Omega$,
 we can estahblish    \eqref{eq:domain} just under the first moment condition of $\Lambda_2$ (i.e. under the condition that
  $\Ee[\Lambda_2]<\infty$).
Indeed, for any $R\ge 1$ and a.s.\ $\w\in \Omega$,
\begin{align*}
&\int_{B(0, R)}\! \int_{\R^d} (1\wedge |z|^2) \frac{\kappa (x, x+z; \w )}{|z|^{d+\alpha}}\, dz \, dx\\
&\leq \left[\sup_{x\in B(0,2R)}\Lambda_2(\tau_x\w)^2\right]\cdot \! \int_{B(0, R)}  \!\int_{B(0,R)} \frac{|z|^2}{|z|^{d+\alpha}}\,dz \, dx\\
&\quad +
\sum_{k=[\log R/\log 2]}^\infty 2^{-k(d+\alpha)}\int_{B(0,R)}\int_{\{2^k\le |z|\le 2^{k+1}\}}\Lambda_2(\tau_x\w)
\Lambda_2(\tau_{x+z}\w)\,dz \, dx\\
&\leq  C_1(R;\w)+\sum_{k=[\log R/\log 2]}^\infty  2^{-k(d+\alpha)}\left(\int_{B(0,R)}\Lambda_2(\tau_x\w)\,dx\right)\cdot
\left(\int_{B(0,2^{k+2})}\Lambda_2(\tau_y\w)\,dy\right)\\
&\leq C_1(R;\w)+C_2(R;\w)\sum_{k=[\log R/\log 2]}^\infty 2^{-k(d+\alpha)}2^{(k+2)d}<\infty,
\end{align*}
where in the third inequality we used the Birkhoff ergodic theorem (see Proposition \ref{P:2.1} below) and the last inequality follows from the local boundedness of $\Lambda_2(\tau_x\w)$.
The assumption such as
the local boundedness of $x\mapsto \Lambda_2(\tau_x\w)$ was
 assumed in \cite{CD} (see (a.3) on p.1536)
 to study the invariance principle for symmetric diffusions in a degenerate and unbounded stationary and ergodic random
 medium.
\end{remark}

Note that  any $\kappa(x, y; \w)$ of form \eqref{e:1.17} enjoys the property \eqref{eq:shifterc}.
When the coefficient $\kappa(x, y; \w)$ of the jumping kernel  is of the  product form \eqref{e:1.17},
the corresponding symmetric Dirichlet form  $(\EE^\w, \FF^\w)$ has the expression
$$
\EE^\w(f, f):=\frac{1}{2}
\iint_{\R^d\times\R^d\setminus \Delta}
(f(x)-f(y))^2 \frac{\nu_1 (\tau_x \w)  \nu_2(\tau_y \w) }{|x-y|^{d+\alpha}}
\I_{\{ y-x\in \Gamma\}}\,dx\,dy \quad \hbox{for } f \in \FF^\w.
$$
In this case, we are able to drop the mixing condition
\eqref{a2-1-1} from Theorem \ref{thm:mainth1}.

\begin{theorem}\label{T:1.5}
Suppose that assumptions {\bf (B1)} and {\bf (B2)}
hold, and that $\bE [ \mu^{p } ]<\infty$ for some $p>1$.
Then the conclusion of Theorem $\ref{thm:mainth1}$ holds with constant
$$K(z):= \bE [\nu_1] \, \bE [\nu_2].$$
\end{theorem}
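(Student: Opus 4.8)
The plan is to re-run the proof of Theorem~\ref{thm:mainth1}, after noticing that the mixing hypothesis \eqref{a2-1-1} is used there \emph{only} to upgrade an $L^2(\Omega)$-convergence of spatially averaged coefficients to a quenched (almost sure) one, and that for the product kernel \eqref{e:1.17} this quenched convergence is instead an immediate consequence of Birkhoff's ergodic theorem, since in $\kappa(x/\e,y/\e;\w)$ the dependence on the two macroscopic variables $x$ and $y$ separates. First I would set up the reductions common to both theorems. By Proposition~\ref{P:1.4}, assumption {\bf(B2)} gives $\Ee[(\nu_1\nu_2)^{-1/2}]<\infty$ and $\Ee[\nu_1^2]+\Ee[\nu_2^2]<\infty$; in particular $\kappa(x,y;\w)\ge 2(\nu_1\nu_2)(\tau_x\w)^{1/2}(\nu_1\nu_2)(\tau_y\w)^{1/2}$, so one may take $\Lambda_1:=\sqrt2\,(\nu_1\nu_2)^{1/2}$ in \eqref{a2-2-2} with $\Ee[\Lambda_1^{-1}]<\infty$. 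Writing $u_\e:=U_\lambda^{\e,\w}f$, the identity $\lambda\|u_\e\|_{L^2(\mu^{\e,\w})}^2+\EE^{\e,\w}(u_\e,u_\e)=\langle f,u_\e\rangle_{L^2(\mu^{\e,\w})}$ gives the $\e$-uniform bounds $\|u_\e\|_{L^2(\mu^{\e,\w})}\le\lambda^{-1}\|f\|_{L^2(\mu^{\e,\w})}$ and $\EE^{\e,\w}(u_\e,u_\e)\le\lambda^{-1}\|f\|_{L^2(\mu^{\e,\w})}^2$, whose right-hand sides stay bounded as $\e\to0$ by Birkhoff's theorem applied to $\mu$ (Proposition~\ref{P:2.1}, using $\Ee[\mu^p]<\infty$). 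From here, \emph{exactly as in the proof of Theorem~\ref{thm:mainth1}}, the degenerate lower bound \eqref{a2-2-2} together with $\Ee[\Lambda_1^{-1}]<\infty$ provides the compactness needed to extract, along any subsequence $\e_k\to0$ and for $\w$ in a fixed full-measure set, a limit $u=u_f\in\F^K$ of $u_{\e_k}$ in $L^1_{loc}(\R^d)$ and in $L^2(\mu^{\e_k,\w})$; the claim then reduces to identifying $u$ through the weak formulation
\[
  \EE^{\e,\w}(u_\e,\phi)+\lambda\langle u_\e,\phi\rangle_{L^2(\mu^{\e,\w})}=\langle f,\phi\rangle_{L^2(\mu^{\e,\w})},\qquad\phi\in C_c^1(\R^d),
\]
by passing to the limit in each term, after which uniqueness of $U_\lambda^K f$ removes the passage to subsequences.

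The only place where the product case departs from Theorem~\ref{thm:mainth1}, and the genuine content here, is the coefficient-averaging statement: on a single full-measure set $\Omega_0$, for all $f,g\in C_c^1(\R^d)$, $\EE^{\e,\w}(f,g)\to\EE^K(f,g)$ as $\e\to0$, with $K$ the constant of the statement of Theorem~\ref{T:1.5}. Since $(f(x)-f(y))(g(x)-g(y))$ and $\I_{\{x-y\in\Gamma\}}$ are symmetric under interchanging $x$ and $y$, the two summands of $\kappa(x/\e,y/\e;\w)$ contribute equally, so with $W(x,y):=(f(x)-f(y))(g(x)-g(y))\,|x-y|^{-d-\alpha}\I_{\{x-y\in\Gamma\}}$ one has $\EE^{\e,\w}(f,g)=\iint_{\R^d\times\R^d\setminus\Delta}W(x,y)\,\nu_1(\tau_{x/\e}\w)\,\nu_2(\tau_{y/\e}\w)\,dx\,dy$, and it suffices to show this converges to $\Ee[\nu_1]\,\Ee[\nu_2]\iint_{\R^d\times\R^d\setminus\Delta}W(x,y)\,dx\,dy$. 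Here $W\in L^1(\R^d\times\R^d)$, with $|W(x,y)|\le C\bigl(|x-y|^{2-d-\alpha}\I_{\{|x-y|\le1\}}+\I_{K_0}(x)\,|x-y|^{-d-\alpha}\I_{\{|x-y|>1\}}\bigr)$ for a fixed compact $K_0\supset\mathrm{supp}\,f\cup\mathrm{supp}\,g$. For the near-diagonal part $\{|x-y|<\delta\}$ I would use $ab\le\tfrac12(a^2+b^2)$, Birkhoff's theorem for $\nu_1^2$ and $\nu_2^2$ (so that $\int_{K_0}\nu_i(\tau_{x/\e}\w)^2\,dx$ stays bounded as $\e\to0$), and $\int_{\{|z|<\delta\}}|z|^{2-d-\alpha}\,dz\le C\delta^{2-\alpha}$, to bound it by $C\delta^{2-\alpha}$ uniformly in small $\e$. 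On $\{|x-y|\ge\delta\}$, where the weight is bounded and compactly supported, I would approximate it in $L^2(\R^d\times\R^d)$ by a finite sum $\sum_i a_i(x)b_i(y)$ of tensor products; for these the statement reduces to the one-variable ergodic fact $\int_{\R^d}a(x)\,\nu_1(\tau_{x/\e}\w)\,dx\to\Ee[\nu_1]\int_{\R^d}a(x)\,dx$ (and its analogue for $\nu_2$), and the $L^2$-approximation error is controlled uniformly in small $\e$ by $\|W\I_{\{|x-y|\ge\delta\}}-\sum_i a_i\otimes b_i\|_{L^2}\cdot\sup_\e\bigl(\int_{K_0}\nu_1(\tau_{x/\e}\w)^2\,dx\bigr)^{1/2}\bigl(\int_{K_0}\nu_2(\tau_{y/\e}\w)^2\,dy\bigr)^{1/2}$; then let $\delta\to0$. (A preliminary truncation $\nu_i\wedge n$ may be inserted and then removed using $\Ee[\nu_i^2]<\infty$.) It is precisely this factorization --- $\nu_1$ probed only through the $x$-variable and $\nu_2$ only through the $y$-variable --- that lets Birkhoff's theorem do the work of the mixing condition \eqref{a2-1-1}.

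With this lemma in hand, the identification proceeds as in Theorem~\ref{thm:mainth1}: a density and diagonalization argument based on the $\e$-uniform energy bound on $u_\e$ upgrades the bilinear convergence from fixed smooth test functions to $\EE^{\e,\w}(u_\e,\phi)\to\EE^K(u,\phi)$, while $u_\e\to u$ in $L^2(\mu^{\e,\w})$ and in $L^1_{loc}(\R^d)$, together with the ergodic theorem for $\mu$, yields $\lambda\langle u_\e,\phi\rangle_{L^2(\mu^{\e,\w})}\to\lambda\langle u,\phi\rangle_{L^2(dx)}$ and $\langle f,\phi\rangle_{L^2(\mu^{\e,\w})}\to\langle f,\phi\rangle_{L^2(dx)}$; hence $\EE^K(u,\phi)+\lambda\langle u,\phi\rangle_{L^2(dx)}=\langle f,\phi\rangle_{L^2(dx)}$ for every $\phi\in C_c^1(\R^d)$, i.e. $u=U_\lambda^K f$, and the strong convergence \eqref{e:1.17a} follows as there. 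I expect the main obstacle to be not this bookkeeping but the compactness-and-identification machinery inherited from Theorem~\ref{thm:mainth1} in the degenerate, unbounded setting --- extracting the limit of $u_\e$ and, above all, passing to the limit inside $\EE^{\e,\w}(u_\e,\phi)$ when $u_\e$ converges only weakly and only $\Lambda_1^{-1}\in L^1$, $\Lambda_2\in L^2$, $\mu\in L^p$ are available --- but that part of the argument never uses \eqref{a2-1-1} and is therefore imported verbatim from the summation case.
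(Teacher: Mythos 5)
Your central insight is right and is exactly the paper's: the reason the mixing assumption can be dropped for the product kernel is that $\kappa(x/\e,y/\e;\w)=\nu_1(\tau_{x/\e}\w)\nu_2(\tau_{y/\e}\w)+\nu_1(\tau_{y/\e}\w)\nu_2(\tau_{x/\e}\w)$ factors the $x$- and $y$-dependence, so Birkhoff's ergodic theorem applied to each factor separately replaces the covariance-decay hypothesis. The paper packages this as Lemma~\ref{L:3.2}(ii) (which proves precisely the one-variable factorization you want, first on rectangles $A\times B$, then by $L^q$-approximation using the $L^p$-boundedness that follows from $\Ee[\nu_i^p]<\infty$), and then deduces Theorem~\ref{T:1.5} by showing that {\bf (B)} implies assumption {\bf (H)} and invoking Theorems~\ref{T:2.2} and~\ref{C:note3}. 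Your plan is the same scheme, so what remains is to check that your concrete estimates close.

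There is a genuine gap in your off-diagonal step. You write that on $\{|x-y|\ge\delta\}$ the weight $W(x,y)=(f(x)-f(y))(g(x)-g(y))|x-y|^{-d-\alpha}\I_\Gamma(x-y)$ is bounded and \emph{compactly supported}, and you then estimate the tensor-approximation error by $\|W\I_{\{|x-y|\ge\delta\}}-\sum_i a_i\otimes b_i\|_{L^2}\cdot\sup_\e\bigl(\int_{K_0}\nu_1(\tau_{x/\e}\w)^2\,dx\bigr)^{1/2}\bigl(\int_{K_0}\nu_2(\tau_{y/\e}\w)^2\,dy\bigr)^{1/2}$. But $W$ is not jointly compactly supported: $W(x,y)=f(x)g(x)|x-y|^{-d-\alpha}\I_\Gamma(x-y)\ne0$ for $x\in K_0$ fixed and $|y|$ arbitrarily large. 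Consequently $W\I_{\{|x-y|\ge\delta\}}-\sum_i a_i\otimes b_i$ is not supported in $K_0\times K_0$, and the Cauchy--Schwarz bound you invoke must be taken over all of $\R^d\times\R^d$, where $\|\nu_1(\tau_{\cdot/\e}\w)\|_{L^2(\R^d)}=\infty$. To repair this you need exactly the extra far-field truncation $|x-y|<1/\eta$ built into assumption {\bf (H)}(ii)/(iv): first show
\begin{equation*}
\lim_{\eta\to0}\limsup_{\e\to0}\iint_{\{|x-y|\ge1/\eta\}}(g(x)-g(y))^2\frac{\kappa(x/\e,y/\e;\w)}{|x-y|^{d+\alpha}}\,dx\,dy=0,
\end{equation*}
using $\kappa(x/\e,y/\e;\w)\le\tfrac12\bigl(\Lambda_2(\tau_{x/\e}\w)^2+\Lambda_2(\tau_{y/\e}\w)^2\bigr)$, $\Ee[\Lambda_2^2]<\infty$, and the dyadic-annulus argument of \eqref{e:3.16}--\eqref{t1-4-1}; only then, on the compact region $\{\delta\le|x-y|\le1/\eta\}\cap((K_0\times\R^d)\cup(\R^d\times K_0))$, is the tensor-product $L^2$-approximation with the local bound $\sup_\e\int\nu_i^2<\infty$ legitimate. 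The same far-field control (now in the form of {\bf (H)}(iii), i.e. $\limsup_\e\int_{B(0,R)}\bigl(\int_{B(0,R)}\kappa\bigr)^p<\infty$) is also needed for the ``upgrade'' $\EE^{\e,\w}(u_\e,\phi)\to\EE^K(u,\phi)$ you defer to the imported argument. So the proposal is morally correct and genuinely the same route as the paper, but the claim of compact support is false and without the extra truncation plus tail estimate the key error bound does not close.
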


\subsection{Comments on main results}
 To the best of our knowledge, only two cases have been
studied in literature
concerning  homogenization of
$\alpha$-stable-like processes (or $\alpha$-stable-like operators) in  ergodic random environment.
The first one is \cite{KPZ},
where the infinitesimal generator is given by \eqref{e:1.5} with $\Gamma =\R^d$ and
$\kappa (x,y,\w)=\lambda_1  (\tau_x \w)\lambda_2 (\tau_y \w)$ in one-parameter ergodic random environment.
The second one is  \cite{FH}, which is under two-parameters ergodic environment.
The setting of our paper is more general, and
it also
includes the symmetrization of those in \cite{DG,GZ,S}; see {\bf (A1)} introduced above.
Besides, as we will explain in Example \ref{exa2} below, assumption
{\bf (B)}
is also related to
random conductance models associated with mutually independent site
percolations.

 As far as we know, for all the  results in literature, the limiting process is
always
non-degenerate with $\Gamma =\R^d$,
 even if the coefficients of scaled processes are degenerate
-- see for instance \cite{BD,CD,CKW,CKK,KPZ, FH}.
Our paper provides examples for symmetric jump processes that both coefficients of scaled processes
and the limiting  process are degenerate. In details, in our paper
not only the cone $\Gamma$ can be a proper subset of $\R^d$  but also the coefficient
$\kappa (x, y, \w)$ of jumping kernel can be degenerate and unbounded.
Moreover, the coefficient $K(z)$ of jumping kernel for the limiting process
can be a non-constant function, which in particular implies that the limiting process
does not need to be a
rotationally symmetric $\alpha$-stable process, but a more general symmetric $\alpha$-stable L\'evy process on $\R^d$ that enjoying the scaling property.

Under assumption {\bf (A)}, we only assume the finiteness of negative 1-moment and
positive $p$-moment with $p>1$ for bounds of the coefficient $\kappa(x,y;\w)$ to study the homogenization problem. We believe that the negative moment
integrability condition is optimal and the positive moment
integrability condition is almost optimal, since they are necessary to apply the ergodic theorem. We emphasis that under assumption {\bf (B)}, we also only require the finiteness of negative 1-moment.  We also note that, under both negative $1$-moment and positive
$1$-moment conditions, the annealed invariance principle for nearest neighbor random walks on random conductances was established in \cite{DMFGW}, and the quenched invariance principle was proven in \cite{Bis} when $d=1,2$. For random divergence forms,
one may follow the two-scale convergence method adopted in \cite{ZP}
to prove the $L^2$-convergence of associated resolvents under similar conditions.

It is natural to consider further
the weak convergence of the
scaled
processes on
the path space. Strong convergence of the
resolvents that we have established so far corresponds to the
convergence of the finite dimensional distributions of the
scaled  processes
when the initial measure is absolutely continuous with respect to an invariant measure.
In order to obtain the weak convergence of the scaled  processes,
we need to establish the tightness (with respect to the Skorohod topology) of the scaled  processes.
In fact, if the initial distribution is an invariant measure (or
more generally it is absolutely continuous with respect to an
invariant measure), then the tightness can be obtained by using the
so-called forward-backward martingale decomposition (see
\cite[Proposition 3.4]{CKK} for the corresponding statement in the
discrete setting). Hence one can obtain the convergence of the
processes on the path space under such initial condition (or under
some weaker topology), see \cite[Theorems 2.2 and 2.3]{CKK} for more
discussions in the discrete case.
When $(x, y)\mapsto \kappa (x, y; \cdot)$ is bounded between two positive constants,
we can use heat kernel estimates from \cite{CK} when $\Gamma =\R^d$ or
parabolic Harnack inequality from \cite{CKW2}  when $\Gamma\subsetneq \R^d$
to establish the tightness and therefore the weak convergence of the scaled processes
starting from any point.
However, it is highly non-trivial
to prove such convergence if the process starts at any fixed point (in
other word, if the initial distribution is a Dirac measure)
when $(x, y) \mapsto \kappa (x, y;\cdot )$ is not bounded between two positive constants.
We will
address this problem in a separate paper.

\subsection{Example}

A typical example of infinite symmetric cone for degenerate non-local Dirichlet forms given by \eqref{e:df} is
$$
\Gamma = \Big\{ z\in \R^d: \langle z, z_0\rangle \geq \eta |z|\Big\}$$
for some $z_0\in \bS^{d-1}$ and $\eta \in [0, 1)$.
 In the deterministic case, the
regularity
estimates for non-local operators associated with such kind of degenerate Dirichlet forms
have been studied in \cite{DK, CKW2}; see \cite[Example 3]{DK} or \cite[Example 1.2]{CKW2} for more details.

As an application of Theorem \ref{T:1.5}, we take the following example that
improves \cite[Theorem 3, Case (Q1)]{KPZ}, where the coefficients
$\lambda_i( \tau_x \w)$
$(i=1,2)$ are assumed to be
uniformly bounded from above and below and $\Gamma = \R^d$.

\begin{example}\label{exa2}\it
Suppose that $\Gamma$ is an infinite symmetric cone in $\R^d$ that has non-empty interior.
For any $\e>0$, let $\LL^{\e,\w}$ be the  L\'evy-type operator given by
\begin{equation}\label{e:prod-ex}
\LL^{\e,\w} f(x)={\rm p.v.} \int_{\R^d} (f(y)-f(x))\frac{\lambda_1(\tau_{x/\e} \w)\lambda_2( \tau_{y/\e} \w)}{|y-x|^{d+\alpha}}\,
\I_{\Gamma} (y-x)\,dy,
\end{equation}
where $\lambda_1$ and $\lambda_2$ are non-negative measurable functions on $(\Omega,\FF,\Pp)$ such that
$$
  \lambda_2^{-1} \in L^1
  (\Omega;\Pp),\,\,\,\,   \lambda_2  \in L^{2} (\Omega;\Pp)
\quad \hbox{and} \quad  {\lambda_2 }/{\lambda_1}\in L^p(\Omega;\Pp),
$$
for some $p>1$.
Then,
as $\e\to0$,  $\LL^{\e,\w}$ converges in the resolvent topology to
$$\LL f(x)={\rm p.v.} \int_{\R^d}  (f(y)-f(x))\frac{C_0}{|y-x|^{d+\alpha}}\,dy,$$ where
\begin{equation}\label{e:1.18}
C_0= \frac{  \left(\Ee  [\lambda_2 ] \right)^2}{ \Ee \left[{\lambda_2 }/{\lambda_1} \right]}
\end{equation}
in the following sense. There is $\Omega_0\subset \Omega$ of full probability measure  so that for every $\w \in \Omega_0$,
$\lambda >0$ and $f\in C_c(\R^d)$,
$$
\lim_{\e \to 0} \| U_\lambda^{\e,\w}f - U_\lambda f\|_{L^2(\R^d; \bar \mu^{\e; \w}(dx) )}=0,
$$
where $\bar \mu^{\e, \w}(dx):= (\lambda_2/\lambda_1)(\tau_{x/\e} \w)\, dx$, and
 $U^{\e, \w}f$ and $U_\lambda f$ are the $\lambda$-order resolvent function of $\LL^{\e, \w}$ and $\LL$,
respectively.  In addition, $U_\lambda^{\e,\w}f$ converges to $U_\lambda f$ in $L^1(B(0,r);dx)$, as $\e\to0$, for every $r>1$.
 \end{example}

The operator \eqref{e:prod-ex} can be seen as a
 random long range randomly weighted
 site model. Indeed, if
 $\lambda_1 ( \tau_x \omega)$ and $\lambda_2 ( \tau_x \omega)$
 are regarded as  random weight at the site $x$ for initiating a jump and receiving a jump, respectively,
 then the long range effect of the media
 for the coefficient of the jump intensity  from $x$ to $y$ is given by the product
 $\lambda_1 ( \tau_x \omega) \lambda_2 ( \tau_y \omega)$.
  At the first sight, the constant coefficient $C_0$ for the limiting operator $\LL$
  should be $\bE [ \lambda_1 \lambda_2]$,
  but with the idea of the time change as used in the proof
  of the assertion of Example \ref{exa2} below, it turns out  the correct one should be the one given by formula \eqref{e:1.18}.
   We emphasize again  that in this site model the mixing condition
\eqref{a2-1-1}  of the media given in Assumption {\bf(A1)} is not needed.

\subsection{Organization of the paper}
The rest of the paper is organized
 as follows.
 In Section \ref{S:2},
 we will prove homogenization of stable-like Dirichlet forms under general sufficient conditions. The main results are Theorems \ref{T:2.2} and \ref{C:note3}.
In Section
\ref{S:3},  we study the weak convergence of non-local symmetric  bilinear forms,
and
the $L^1$-precompactness of bounded functions with bounded Dirichlet  energies.
Both of them are
of interest in their own. With those two at hand, we give
proofs of
Theorems \ref{thm:mainth1} and \ref{T:1.5}, and the assertion of Example \ref{exa2} in Subsection \ref{S:3.3}.
 In the appendix of this paper,
 in addition to presenting the proofs for Lemma \ref{L:d1} and Proposition \ref{P:1.4},
 we study
the Mosco convergence for $(\E^{\e,\w},\F^{\e,\w})$.

\subsection{Notations}  We use := as a way of definition. Let $\R_+:=[0,\infty)$, $\Z_+:=\{0,1,2,\cdots\}$, $\bS^{d-1}$ be the unit sphere in $\R^d$.  For all $x\in \R^d$ and $r>0$, set $B(x,r)=\{z\in \R^d:|z-x|<r\}$. For $p \in [1,\infty]$ and Lebesgue measurable $A\subset \R^d$,
we use $|A|$ to denote the $d$-dimensional Lebesgue measure  of $A$, $C_b(A)$ the space of bounded and continuous functions on $A$, $L^p(A;dx)$ the space of $L^p$-integrable functions on $A$ with  respect to the Lebesgue measure,
and $L_{loc}^p(\R^d;dx)$ the space of locally $L^p$-integrable functions on $\R^d$ with respect to the Lebesgue measure. Denote $\langle\cdot, \cdot\rangle_{L^2(\R^d;\mu(dx))}$ the inner product in $L^2(\R^d;\mu(dx))$.
Denote by $B(\R^d)$ the set
of locally bounded measurable  functions on $\R^d$, by $B_b(\R^d)$
the set of bounded measurable functions on $\R^d$, and by
$B_c(\R^d)$ the set of bounded measurable functions on $\R^d$ with
compact support.
$C^1_c(\R^d)$ (respectively, $C_c(\R^d)$ or $C_c^\infty(\R^d)$) denotes the space of $C^1$-smooth (respectively, continuous or $C^\infty $-smooth)
functions on $\R^d$ with compact support.

\section{Homogenization of stable-like Dirichlet forms: general results}\label{S:2}

For any $\e>0$, let $\LL^{\e,\w}$ be the generator of the Dirichlet form $(\E^{\e,\w},\F^{\e,\w})$
on $L^2(\R^d; \mu^{\e, \w}(dx))$ given by \eqref{e:1.6}.
Let $\LL^K$ be the generator of the Dirichlet form $(\E^K,\F^K)$
of \eqref{eq:DeDFK} on $L^2(\R^d; dx)$.
The goal of homogenization theory is to construct homogenized characteristics and clarify whether the solutions for the operators $\LL^{\e,\w}$ are close to the solution for the operator $\LL^K$. In this paper, we are concerned
with the following question:
 how to prove that the solution to the equation
\begin{equation}\label{euq-00}
\left(\lambda -\LL^{\e,\w} \right) u^{\e,\w} =f
\end{equation}
 on $L^2(\R^d;\mu^{\e,\w}(dx))$ for any $\lambda>0$ and $f\in C_c(\R^d)$ converges in the resolvent
topology,
as $\e\to0$, to the solution to the equation
\begin{equation}\label{euq-0011}
\left( \lambda -\LL^K \right) u   =f
\end{equation}
on $L^2(\R^d;dx)$.

The section is devoted to addressing this question under the following assumption.

\noindent{\it{\bf Assumption (H):}   There is $\Omega_0\subset \Omega$ of full probability  measure so that the following hold
for every $\w \in \Omega_0$.
\begin{itemize}
\item[(i)] If
 $\{f_\e:\e\in (0,1]\}$ is a sequence of functions on $\R^d$ such that $f_{\e}\in
\FF^{\e,\w}$ for any $\e\in (0,1]$ and
$$
\limsup_{\e \to 0}  \left(\|f_\e\|_\infty+
\EE^{\e,\w}(f_\e,f_\e)\right)<\infty,
$$
then $\{f_\e: \e\in (0,1]\}$ is  pre-compact  as $\e \to 0$  in $L^1(B(0,r);dx)$ for every $r>1$ in the sense that for any sequence
$\{\e_n: n\geq 1\}\subset (0, 1]$ with $\lim_{n\to 0} \e_n =0$, there are a subsequence $\{\e_{n_k}:k\geq 1\}$ and a
function $f \in L^1_{loc}(\R^d;dx)$
 so that  $f_{n_k}$ converges to $f$ in $L^1(B(0,r);dx)$ for every $r>1$.

\item[(ii)] For any $g\in C_c^\infty(\R^d)$,
 \begin{equation}\label{e:2.3}
 \lim_{\eta\to 0}\limsup_{\varepsilon\to 0}
 \iint_{\R^d\times \R^d\setminus \Delta}
 (g(x)- g(y))^2\frac{\kappa ({ x/\e},y/\e;\w)}{|x-y|^{d+\alpha}}\I_{\{|x-y|\le \eta\}}\,dx\,dy=0
 \end{equation}
 and
 \begin{equation}\label{e:2.3-0}
\lim_{\eta\to 0}\limsup_{\varepsilon\to0}
\iint_{\R^d\times \R^d}
(g(x)- g(y))^2\frac{\kappa ({ x/\e},y/\e;\w)}{|x-y|^{d+\alpha}}\I_{\{|x-y|\ge1/\eta\}}\,dx\,dy=0.
 \end{equation}

\item[(iii)] There is a constant $p>1$ such that
 $$
\limsup_{\e\to 0}\int_{B(0, R)}\Big(\int_{B(0, R)} {\kappa ({x/\e},y/\e;\w)}\,dy\Big)^p\,d x<\infty
\quad \hbox{for every } R\geq 1 .
$$

\item[(iv)] For every  $\eta>0$, $f\in B_b(\R^d)$ and $g\in C^\infty_c(\R^d)$,
\begin{align*}
&\lim_{\e\to 0}
\iint_{\R^d\times \R^d}
(f(x)-f(y))(g(x)-g(y))
\frac{\kappa (x/\e,y/\e;\w)}{|x-y|^{d+\alpha}} \I_{\{\eta<|x-y|<1/\eta,  x-y \in \Gamma\}} \,dx\,dy\\
&=
\iint_{\R^d\times \R^d}
(f(x)-f(y))(g(x)-g(y)) \frac{K(x-y) }{|x-y|^{d+\alpha}} \I_{\{\eta<|x-y|<1/\eta,  x-y \in \Gamma\}}  \,dx\,dy ,
\end{align*}
where $K(z)$ is a  measurable even  function on $\R^d$ such that $C_1\le K(z)\le C_2$ for some constants $C_1,C_2>0$.
\end{itemize}}

The section is divided into two parts. We first consider the weak convergence of resolvents in the Dirichlet norm, and then study the strong convergence of resolvents in the $L^2$-norm.

We will use the following Birkhoff ergodic theorem  (see, for example, \cite[Theorem 7.2]{JKO}) several times in this paper.

\begin{proposition}\label{P:2.1}
Suppose that $\nu\geq 0$ is a random variable on $(\Omega, \FF, \bP)$ with $\bE [ \nu ] <\infty$.
There is a subset $\Omega_1\subset \Omega$ of full probability measure so that for every $\w\in \Omega_1$,
the function $x\mapsto \nu (\tau_{x/\e} \w )$ converges weakly to $\bE [ \nu]$  in $L^1_{loc} (\R^d;dx)$ as $\e \to 0$;
that is, for every  $\w \in \Omega_1$, every bounded Lebesgue measurable set $K\subset \R^d$
and every  $\varphi \in L^\infty (K; dx)$,
$$
 \lim_{\e \to 0} \int_{K} \varphi (x) \nu (\tau_{x/\e} \w )\,dx
 =  \bE [\nu]\int_{K} \varphi (x)  \, dx.$$
Furthermore, if $\bE [ \nu^p]<\infty $ for some $p>1$, then for every $\w \in \Omega_1$, the function
$x\mapsto \nu (\tau_{x/\e} \w )$ converges weakly to $\bE [ \nu ]$ in $L^p_{loc} (\R^d;dx)$ as $\e \to 0$.
\end{proposition}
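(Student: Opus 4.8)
The plan is to deduce the statement from the classical multidimensional Birkhoff ergodic theorem for the measure-preserving $\R^d$-action $\{\tau_x\}_{x\in\R^d}$. First I would recall the spatial ergodic theorem in the form that is most convenient here: since $\{\tau_x\}$ is a stationary ergodic measurable flow on $(\Omega,\FF,\Pp)$ and $\nu\in L^1(\Omega;\Pp)$, there is a set $\Omega_1\subset\Omega$ of full probability such that for every $\w\in\Omega_1$ and every bounded convex set $Q\subset\R^d$ with nonempty interior,
$$
\lim_{t\to\infty}\frac{1}{|tQ|}\int_{tQ}\nu(\tau_y\w)\,dy=\bE[\nu].
$$
This is exactly the form quoted from \cite[Theorem 7.2]{JKO}. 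A standard covering/exhaustion argument upgrades this to: for every $\w\in\Omega_1$ and every bounded rectangle $R\subset\R^d$ (or, by a further routine density argument, every bounded Lebesgue measurable set), $\frac1{t^d}\int_{tR}\nu(\tau_y\w)\,dy\to|R|\,\bE[\nu]$ as $t\to\infty$; reindexing via $y=x/\e$ with $t=1/\e$ this reads
$$
\lim_{\e\to0}\int_{R}\nu(\tau_{x/\e}\w)\,dx=|R|\,\bE[\nu].
$$

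Next I would bootstrap from indicator-type test functions to arbitrary $\varphi\in L^\infty(K;dx)$. Fix a bounded measurable $K$ and $\varphi\in L^\infty(K;dx)$. Approximate $\varphi$ in $L^1(K;dx)$ by a simple function $\varphi_m=\sum_{j}c_j\I_{R_j}$ (finite union of disjoint rectangles covering $K$) with $\|\varphi_m\|_\infty\le\|\varphi\|_\infty$ and $\|\varphi-\varphi_m\|_{L^1(K;dx)}\to0$. For $\varphi_m$ the convergence $\int_K\varphi_m(x)\nu(\tau_{x/\e}\w)\,dx\to\bE[\nu]\int_K\varphi_m(x)\,dx$ follows from the rectangle case above by linearity. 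To control the error $\int_K(\varphi-\varphi_m)(x)\nu(\tau_{x/\e}\w)\,dx$ one cannot use $\|\nu(\tau_{\cdot/\e}\w)\|_\infty$ (it need not be bounded), so instead I would use the uniform integrability provided by the ergodic theorem applied to $\nu$ together with a Lusin-type argument: since $\varphi-\varphi_m$ is small in $L^1$ and bounded, split $K$ into the region where $|\varphi-\varphi_m|\le\delta$ (contributing at most $\delta\int_K\nu(\tau_{x/\e}\w)\,dx$, which stays bounded as $\e\to0$ by the rectangle case) and its complement, whose Lebesgue measure is at most $\|\varphi-\varphi_m\|_{L^1}/\delta$ and on which we use that $\{\nu(\tau_{\cdot/\e}\w)\}_{\e}$ has uniformly small mass on small-measure sets — this last fact again follows from the spatial ergodic theorem applied to the truncations $\nu\wedge N$ and the convergence $\bE[\nu\wedge N]\uparrow\bE[\nu]$. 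Sending $m\to\infty$ then $\delta\to0$ closes the argument and gives the weak $L^1_{loc}$ convergence.

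For the final assertion, if $\bE[\nu^p]<\infty$ with $p>1$, then weak convergence in $L^p_{loc}$ is equivalent to weak $L^1_{loc}$ convergence together with boundedness of $\{\nu(\tau_{\cdot/\e}\w)\}_\e$ in $L^p(K;dx)$ for each bounded $K$; the latter is immediate from the rectangle version of the ergodic theorem applied to $\nu^p$, which gives $\int_K\nu(\tau_{x/\e}\w)^p\,dx\to|K|\,\bE[\nu^p]<\infty$, hence $\sup_\e\|\nu(\tau_{\cdot/\e}\w)\|_{L^p(K;dx)}<\infty$. Combined with the $L^1_{loc}$ weak convergence already established and the density of $L^{p'}$ test functions in the relevant spaces, this yields weak convergence in $L^p_{loc}$. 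The main obstacle is the middle step: since $\nu$ is only assumed integrable and $x\mapsto\nu(\tau_{x/\e}\w)$ need not be locally bounded, passing from simple test functions to general $\varphi\in L^\infty$ requires the uniform-integrability input from the ergodic theorem rather than a crude sup bound; everything else is routine.
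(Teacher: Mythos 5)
The paper does not prove Proposition~\ref{P:2.1}; it simply quotes it as a known Birkhoff-type ergodic theorem, citing \cite[Theorem 7.2]{JKO}. So there is no internal proof to compare with, and the relevant question is only whether your reconstruction is sound. In outline it is: you correctly identify the pointwise multidimensional ergodic theorem as the engine, and you correctly identify the one genuine obstacle (the family $x\mapsto\nu(\tau_{x/\e}\w)$ is not locally bounded, so passing from indicator/simple test functions to general $\varphi\in L^\infty$ needs uniform integrability, not a sup bound), and you supply the right fix via the truncations $\nu\wedge N$ and $\bE[(\nu-N)_+]\to 0$.

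One point of logical hygiene worth flagging. As written, your upgrade chain ``convex sets $\Rightarrow$ rectangles $\Rightarrow$ bounded measurable sets $\Rightarrow$ $L^\infty$ test functions'' is mildly circular, because the passage from rectangles to arbitrary bounded measurable sets already requires the same uniform-integrability input you invoke only later. The clean order is: (a) fix a countable family (say dyadic cubes), obtain a single $\Omega_1$ of full measure on which $\int_Q\nu(\tau_{x/\e}\w)\,dx\to|Q|\,\bE[\nu]$ for every dyadic cube $Q$, and likewise for each $(\nu-N)_+$, $N\in\N$; (b) deduce, for $\w\in\Omega_1$ and each bounded $K$, that $\{\nu(\tau_{\cdot/\e}\w)\I_K\}_{\e}$ is uniformly integrable in $L^1(K;dx)$ from the bound $\int_E\nu(\tau_{x/\e}\w)\,dx\le N|E|+\int_{K}(\nu(\tau_{x/\e}\w)-N)_+\,dx$; (c) only then pass to arbitrary bounded measurable sets and to general $\varphi\in L^\infty(K)$ (Dunford--Pettis plus identification of the limit on the dyadic cubes suffices). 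With that reordering your argument is complete. The $L^p$ assertion is handled exactly as you say: apply the $L^1$ version to $\nu^p$ to get $\sup_\e\|\nu(\tau_{\cdot/\e}\w)\|_{L^p(K;dx)}<\infty$, then use density of $L^\infty(K)$ in $L^{p/(p-1)}(K)$.
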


\subsection{Weak convergence of resolvents}

Recall that $\mu$ is a random variable on $(\Omega, \FF, \bP)$ so that $\bE [ \mu ] =1$ and for any $\w \in \Omega$,
$\mu (\tau_x \w ) >0$ for a.e.\ $x\in \R^d$.
Denote by $U_\lambda^{\e,\w}$   the $\lambda$-order resolvent of the regular
Dirichlet form $(\E^{\e,\w},\F^{\e,\w})$ on $L^2(\R^d; \mu^{\e, \w} (dx))$, and $U_\lambda^K$  the
$\lambda$-order resolvent of the  regular Dirichlet form $(\E^K,\F^K)$ on $L^2(\R^d;  dx)$.
It is well known that $U_\lambda^{\e,\w}f $  and $U_\lambda^Kf $  are the unique solution to \eqref{euq-00} and \eqref{euq-0011},
respectively.

\begin{theorem}\label{T:2.2}
Suppose that assumption {\bf (H)} holds and that $\bE [\mu^p]<\infty$ for some $p>1$.  Then
there is a subset $\Omega_2\subset \Omega$ of full probability measure
so that  for every  $\w\in \Omega_2$ and any $f\in C_c(\R^d)$,
\begin{equation}\label{e:2.4}
U_\lambda^{\e,\w} f  \hbox{ converges to }  U_\lambda^Kf \hbox{ in  } L^1(B(0,r);dx)\quad \hbox{ as  }\e \to 0
\end{equation}
for every $r>1$,
\begin{equation}\label{e:2.5}
\lim_{\e \to 0} \langle U_\lambda^{\varepsilon } f,g\rangle_{L^2(\R^d; \mu^{\e}(dx))}=
\< U^K_\lambda f, g\>_{L^2(\R^d; dx)} \quad \hbox{for every } g\in C_c(\R^d),
\end{equation}
and
\begin{equation}\label{euq-0022}
\lim_{\e\to0} \EE^{\e,\w}(U_\lambda^{\e,\w} f,g)={\EE}^K(U_\lambda^K f, g)
\quad \hbox{for every } g\in C_c^\infty (\R^d),
\end{equation}
where $K(z)$ is the function in assumption {\bf (H)}{\rm(iv)}.
\end{theorem}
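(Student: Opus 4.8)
The plan is to run a compactness-plus-identification argument on the resolvents $u_\e := U_\lambda^{\e,\w} f$, working $\w$-wise on the full-measure set where assumption {\bf (H)} holds and where the Birkhoff ergodic theorem of Proposition \ref{P:2.1} applies to $\mu$ (intersect to get $\Omega_2$). First I would record the basic energy estimate: testing $(\lambda-\LL^{\e,\w})u_\e = f$ against $u_\e$ in $L^2(\R^d;\mu^{\e,\w}(dx))$ gives
$$
\EE^{\e,\w}(u_\e,u_\e) + \lambda \|u_\e\|_{L^2(\R^d;\mu^{\e,\w})}^2 = \langle f, u_\e\rangle_{L^2(\R^d;\mu^{\e,\w})} \le \|f\|_{L^2(\R^d;\mu^{\e,\w})}\,\|u_\e\|_{L^2(\R^d;\mu^{\e,\w})},
$$
so $\|u_\e\|_{L^2(\R^d;\mu^{\e,\w})} \le \lambda^{-1}\|f\|_{L^2(\R^d;\mu^{\e,\w})}$ and $\EE^{\e,\w}(u_\e,u_\e) \le \lambda^{-1}\|f\|_{L^2(\R^d;\mu^{\e,\w})}^2$. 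By the Markov property of the resolvent one also has $\|u_\e\|_\infty \le \lambda^{-1}\|f\|_\infty$, and since $\bE[\mu]=1$ the Birkhoff theorem makes $\|f\|_{L^2(\R^d;\mu^{\e,\w})}^2 \to \|f\|_{L^2(\R^d;dx)}^2$, so $\limsup_{\e\to 0}(\|u_\e\|_\infty + \EE^{\e,\w}(u_\e,u_\e)) < \infty$. Hypothesis {\bf (H)}(i) then yields: along any sequence $\e_n\to 0$ there is a subsequence and $u \in L^1_{loc}(\R^d;dx)$ with $u_{\e_{n_k}} \to u$ in $L^1(B(0,r);dx)$ for every $r>1$; passing to a further subsequence we may assume $u_{\e_{n_k}}\to u$ a.e.\ as well, and $\|u\|_\infty \le \lambda^{-1}\|f\|_\infty$.

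Next I would identify the limit $u$ as $U_\lambda^K f$, which by uniqueness of the resolvent forces the whole family (not just a subsequence) to converge and completes \eqref{e:2.4}. The natural route is to pass to the limit in the weak formulation
$$
\EE^{\e,\w}(u_\e,g) + \lambda\,\langle u_\e, g\rangle_{L^2(\R^d;\mu^{\e,\w})} = \langle f, g\rangle_{L^2(\R^d;\mu^{\e,\w})}, \qquad g \in C_c^\infty(\R^d).
$$
For the right-hand side and the $\lambda$-term one uses Birkhoff: since $u_{\e_{n_k}} \to u$ in $L^1$ on compact sets, is uniformly bounded, and $\mu(\tau_{x/\e}\w)\,dx$ converges weakly to $dx$ in $L^1_{loc}$, one gets $\langle u_{\e_{n_k}}, g\rangle_{L^2(\mu^{\e,\w})} \to \langle u, g\rangle_{L^2(dx)}$ and $\langle f, g\rangle_{L^2(\mu^{\e,\w})} \to \langle f,g\rangle_{L^2(dx)}$ (this is where the uniform bound from the $L^\infty$-estimate matters, to upgrade weak-$L^1$ convergence of $\mu^{\e,\w}$ against the product $u_{\e}\cdot g$). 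For the energy term one splits the double integral defining $\EE^{\e,\w}(u_\e,g)$ into the near-diagonal part $\{|x-y|\le \eta\}$, the far part $\{|x-y|\ge 1/\eta\}$, and the bulk $\{\eta < |x-y| < 1/\eta\}$. On the near and far parts one bounds $|u_\e(x)-u_\e(y)|$ crudely using $\|u_\e\|_\infty \le \lambda^{-1}\|f\|_\infty$ together with Cauchy–Schwarz and the energy bound on $u_\e$, reducing the estimate to the quantities controlled by {\bf (H)}(ii) (applied to $g$, and to a cutoff that equals $1$ on a large ball — here one must be a little careful since $u_\e$ has no compact support, but its $L^\infty$ bound plus the integrability of $(1\wedge|z|^2)|z|^{-d-\alpha}$ handles the far tail). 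On the bulk part, $K(x-y)/|x-y|^{d+\alpha}\I_{\{\eta<|x-y|<1/\eta\}}$ is a bounded kernel, and the $L^1_{loc}$-convergence $u_{\e_{n_k}}\to u$ combined with {\bf (H)}(iv) (applied with $f\leftrightarrow u_\e$, noting $u_\e\in B_b$) passes the bulk term to its limit with kernel $K$; the {\bf (H)}(iii) moment bound is what lets one also replace $u_\e$ by $u$ inside the bulk integral and control cross terms via a weak-$L^p$/strong-$L^{p'}$ pairing. Sending $\eta\to 0$ at the end, using {\bf (H)}(ii) once more on the $u$-side to kill the near-diagonal contribution of the limit, gives
$$
\EE^K(u,g) + \lambda\,\langle u, g\rangle_{L^2(dx)} = \langle f, g\rangle_{L^2(dx)} \qquad \text{for all } g\in C_c^\infty(\R^d),
$$
whence $u = U_\lambda^K f$. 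Then \eqref{e:2.5} follows from the $L^1_{loc}$ convergence plus Birkhoff as above (for general $g\in C_c(\R^d)$, approximate by $C_c^\infty$), and \eqref{euq-0022} follows by subtracting the $\lambda$-term and the $f$-term from the weak formulation and using the two limits just established.

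The main obstacle is the energy-term passage to the limit, specifically handling the fact that $u_\e$ is only controlled in $L^\infty$, in $L^1_{loc}$, and through its Dirichlet energy $\EE^{\e,\w}(u_\e,u_\e)$ — it has no compact support and no pointwise regularity — while the kernel $\kappa(x/\e,y/\e;\w)/|x-y|^{d+\alpha}$ both oscillates and is unbounded in the coefficient. The near-diagonal and far-field truncations are designed precisely so that {\bf (H)}(ii) absorbs the contributions where only the crude $L^\infty$/energy bounds are available, and {\bf (H)}(iii)–(iv) handle the bulk where genuine convergence of the oscillating coefficient to $K$ takes place; making the cross-term estimates in the bulk rigorous (replacing $u_\e$ by $u$ against a weakly-$L^p$-convergent coefficient, using that $g(x)-g(y)$ provides an $L^{p'}$ factor supported away from the diagonal) is the delicate bookkeeping. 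A secondary technical point is the tightness/far-field argument forcing the limit $u$ to lie in $\F^K$ rather than merely in $L^1_{loc}\cap L^\infty$; this is extracted from lower semicontinuity of the Dirichlet energy along the truncated (bounded-kernel) bulk forms together with the uniform bound $\EE^{\e,\w}(u_\e,u_\e)\le \lambda^{-1}\|f\|_{L^2(\mu^{\e,\w})}^2$, letting $\eta\to 0$.
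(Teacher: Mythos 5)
Your proposal follows essentially the same compactness-plus-identification argument as the paper's proof: the same energy and $L^\infty$ bounds, the same use of {\bf (H)}(i) for $L^1_{loc}$ pre-compactness, the same near-diagonal/bulk/far splitting of the energy term with {\bf (H)}(ii) absorbing the small and large jumps via Cauchy--Schwarz, and the same two-step pass in the bulk — first exchanging $u_\e$ for the limit $u$ by H\"older against the $L^p$ coefficient bound of {\bf (H)}(iii), then invoking {\bf (H)}(iv) with the fixed bounded function $u$ — followed by identification of $u$ with $U_\lambda^K f$ through the weak formulation against $C^\infty_c$. Two small remarks: the cutoff you contemplate for the far part is not needed, since $g$ already localizes the Cauchy--Schwarz factor; and the concluding lower-semicontinuity step to force $u\in\F^K$ can be omitted, because once $u\in L^2(\R^d;dx)$ (from the resolvent contraction and Birkhoff) the weak identity $\langle u,(\lambda-\LL^K)g\rangle=\langle f,g\rangle$ over the core $C^\infty_c(\R^d)$ already characterizes $u=U^K_\lambda f$.
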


\begin{proof}
Let $\Omega_2=\Omega_0 \cap \Omega_1$, where $\Omega_0$ and $\Omega_1$ are
the subset of $\Omega$ in assumption {\bf (H)} and  in Proposition \ref{P:2.1}, respectively,
both of them having full probability measure.
Let $\w \in \Omega_2$.
For simplicity, throughout the proof we omit the parameter $\w$
from $U_\lambda^{\e,\w}$ and $\mu^{\e,\w}$.

Fix $\lambda>0$.
For any $\varepsilon>0$ and $f \in C_c  (\R^d)$, $U_\lambda^{\varepsilon}f$ is the unique element in $ \F^{\e,\w}$
so that
 \begin{equation}\label{e:sy2}
\lambda\langle U_\lambda^{\varepsilon} f,g\rangle_{L^2(\R^d;
\mu^{\e}(dx))}+\EE^{\varepsilon}(U_\lambda^{\varepsilon}f,g)=
\langle f, g\rangle_{L^2(\R^d; \mu^{\e}(dx))} \quad \hbox{for } g\in \F^{\e,\w}.
\end{equation}

We first treat the limits for the right hand side and the first term in the left hand side in \eqref{e:sy2}.  Note that
$\|U_\lambda^\e f\|_{L^2(\R^d; \mu^\e(dx))} \le \lambda^{-1}\|f\|_{L^2(\R^d;\mu^\e(dx))}$ for $\e\in (0,1]$.
Thus by \eqref{e:sy2},
\begin{equation}\label{t2-1-4a}
\lambda \|U_\lambda^\e f\|_{L^2(\R^d;\mu^\e(dx))}^2+
\EE^{\varepsilon}(U_\lambda^{\varepsilon}f,U_\lambda^\e f)
= \langle f, U_\lambda^\e f\rangle_{L^2(\R^d; \mu^{\e}(dx))} \leq
  \lambda^{-1}\|f\|_{L^2(\R^d;\mu^\e(dx))}^2.
\end{equation}
Hence, according to $\Ee[\mu]=1$ and Proposition \ref{P:2.1},
$$
\limsup_{\e \to 0}
\big(\EE^{\varepsilon}(U_\lambda^{\varepsilon}f,U_\lambda^\e
f)+ \|U_\lambda^\e f\|_{L^2(\R^d;\mu^\e(dx))}^2\big)<\infty.
$$
Note also that $\|U_\lambda^\e f\|_\infty\le \lambda^{-1}\|f\|_\infty$ for all $\varepsilon\in (0,1]$.
By assumption {\bf (H)}(i),
$\{U_\lambda^\e f: \e\in (0,1]\}$   is
pre-compact  as $\e \to 0$  in $L^1(B(0,r);dx)$ for every $r>1$. In particular,
for any  sequence $\{\e_n: n\ge 1\}\subset (0, 1]$ with $\lim_{n\to \infty} \e_n=0$,
we can find a subsequence
$\{\e_{n_k}: k\geq 1\}$ (for simplicity we will still denote it by $\{\e_n: n\geq 1\}$)
and a
function $U_\lambda^* f\in L^1_{loc}(\R^d;dx)$ (which indeed may depend on $\w$) so that
\begin{equation}\label{e:2.6}
\lim_{n \rightarrow \infty}\int_{B(0,r)}|U_\lambda^{\e_n}
f(x)-U_\lambda^* f(x)|\,dx=0 \quad \hbox{for every } r>1.
\end{equation}
Since $\sup_{\e\in (0,1]}\|U_\lambda ^\e f\|_\infty\le \lambda^{-1}\|f\|_\infty$,
we in fact have $\|U_\lambda ^* f\|_\infty\le \lambda^{-1}\|f\|_\infty$ and so for every $q\in [1, \infty)$,
\begin{equation}\label{t2-1-6}
\lim_{n \rightarrow \infty}\int_{B(0,r)}|U_\lambda^{\e_n}
f(x)-U_\lambda^* f(x)|^q\, dx=0\quad \hbox{for every } r>1.
\end{equation}
We next show that $U^*_\lambda f= U^K_\lambda f$, where $U^K_\lambda$ is the
$\lambda$-order resolvent associated with the operator $\LL^K$.

Fix $g\in C_c(\R^d)$. We choose $R>0$ such that $\text{supp}[g]\subset B(0,R_0)$. Then, by the H\"older inequality with $p>1$ from assumption {\bf (H)}(iii),
\begin{align*}
&\int_{\R^d} |U_\lambda^{\e_n} f(x)-U^*_\lambda f(x)|g(x)\mu
(\tau_{x/\e_n } \w)  \,dx\\
&\le \|g\|_\infty \left(\int_{B(0,R_0)}|U_\lambda^{\e_n}
f(x)-U^*_\lambda f(x)|^{{p}/({p-1})}\,dx\right)^{(p-1)/p}
\left(\int_{B(0,R_0)}\mu
(\tau_{x/\e_n } \w)^{p} \,dx\right)^{1/p}.
\end{align*}
Since $\Ee[\mu^p]<\infty$, we get by Proposition \ref{P:2.1} that
for $\w \in \Omega_2 \subset \Omega_1$,
$$
\limsup_{n\to\infty}\int_{B(0,R_0)}\mu^p (\tau_{x/\e_n} \w )\,dx<\infty.
$$
This along with \eqref{t2-1-6} yields
$$
\lim_{n \to \infty}\int_{\R^d} |U_\lambda^{\e_n} f(x)-U^*_\lambda
f(x)|g(x)
\mu (\tau_{x/\e_n } \w ) \,dx=0.
$$
On the other hand,
since $\Ee [ \mu]=1$, we have again by Proposition \ref{P:2.1},
\begin{align*}
\lim_{n \to \infty}\int_{\R^d}U^*_\lambda f(x)
g(x)\mu(\tau_{x/\e_n}\w)\,dx& =\lim_{n \to
\infty}\int_{B(0,R_0)}U^*_\lambda f(x) g(x)
\mu (\tau_{x/\e_n} \w) \, dx \\
&=
\int_{B(0,R_0)}U^*_\lambda f(x) g(x)\Ee
\left[ \mu\right]
\,dx=
\int_{\R^d}U^*_\lambda f(x) g(x)\,dx.
\end{align*}
Putting both estimates above together yields that for any $f,g\in C_c(\R^d)$
\begin{equation}\label{t2-1-5a}
\lim_{n \to \infty}\lambda\langle U_\lambda^{\varepsilon_n} f,g\rangle_{L^2(\R^d; \mu^{\e_n}(dx))}=
\lambda\langle U^*_\lambda f,g\rangle_{L^2(\R^d;dx)}.
\end{equation}

Similarly, according to $\Ee [ \mu]=1$ and Proposition \ref{P:2.1}, we have for every $f, g\in C_c(\R^d)$,
\begin{equation}\label{t2-1-5b}
\lim_{n \to \infty}\langle f,g\rangle_{L^2(\R^d;\mu^{\e_n}(dx))}=
\langle f,g\rangle_{L^2(\R^d; dx)}.
\end{equation}
In particular, by \eqref{t2-1-5a} and \eqref{t2-1-5b},
we obtain
that for every $g\in C_c(\R^d)$,
\begin{align*}
\langle U_\lambda^* f, g\rangle_{L^2(\R^d;dx)}&
=\lim_{n \to \infty}\langle U_\lambda^{\e_n}f, g\rangle_{L^2(\R^d;\mu^{\e_n}(dx))}\\
&\le \lim_{n \to \infty}(\|U_\lambda^{\e_n}f\|_{L^2(\R^d;\mu^{\e_n}(dx))}\cdot \|g\|_{L^2(\R^d;\mu^{\e_n}(dx))})\\
&\le \lambda^{-1}\lim_{n \to \infty}(\|f\|_{L^2(\R^d;\mu^{\e_n}(dx))}\cdot \|g\|_{L^2(\R^d;\mu^{\e_n}(dx))})\\
&=\lambda^{-1}\|f\|_{L^2(\R^d;dx)}\cdot \|g\|_{L^2(\R^d;dx)},
\end{align*}
which implies immediately that $U_\lambda^*f \in L^2(\R^d;dx)$ and
$\|U_\lambda^* f\|_{L^2(\R^d;dx)}\le \lambda^{-1}\|f\|_{L^2(\R^d;dx)}$.

We now treat the second term in the left hand side of \eqref{e:sy2}
with $g\in C^\infty_c(\R^d)$.
According to Lemma \ref{L:d1}, it holds that for any $0<\eta\le 1$,
\begin{align*}
 2\EE^{\varepsilon_n}(U_\lambda^{\varepsilon_n}f,g)
&=\iint_{\R^d\times \R^d \setminus \Delta} (U_\lambda^{\varepsilon_n}f(x+z)- U_\lambda^{\varepsilon_n}f(x))(g(x+z)-g(x))\frac{\kappa (0,z/\varepsilon_n ;\tau_{x/\varepsilon_n }\w)}{|z|^{d+\alpha}} \,
\I_{\Gamma\cap\{|z|\leq \eta\}}  \,dz dx \\
&\quad+\iint_{\R^d\times \R^d} (U_\lambda^{\varepsilon_n}f(x+z)- U_\lambda^{\varepsilon_n}f(x))(g(x+z)-g(x))\frac{\kappa (0,z/\varepsilon_n ;\tau_{x/\varepsilon_n }\w)}{|z|^{d+\alpha}}
\, \I_{\Gamma \cap \{|z|\ge 1/\eta\}} \,dz dx\\
&\quad-2\langle U_\lambda^{\varepsilon_n}f, \LL^{n}_{\eta} g\rangle_{L^2(\R^d; dx)}\\
&=:I_1^{n,\eta}+I_2^{n,\eta}-2I_{3}^{n,\eta},
\end{align*}
where
$$
\LL_{\eta}^{n} f(x)=\int_{\R^d} (f(x+z)-f(x))\frac{\kappa (0,z/\varepsilon_n ;\tau_{x/\varepsilon_n }\w)}{|z|^{d+\alpha}}\, \I_{\Gamma \cap \{\eta<|z|< 1/\eta\}} \,dz.
$$
Note that
$$
\LL_{0}^{n}f(x):={\rm p.v.} \int_{\R^d}
(f(x+z)-f(x))\frac{\kappa (0,z/\varepsilon_n ;\tau_{x/\varepsilon_n}\w)}{|z|^{d+\alpha}}
\, \I_{\Gamma}(z) \,dz
$$
typically is the generator of the Dirichlet form $(\E^{\e_n, \w}, \F^{\e_n,\w})$ on $L^2(\R^d;dx)$.

By the Cauchy-Schwarz inequality,
$$
I_1^{n,\eta}
\leq  \sqrt{2\EE^{\varepsilon_n}(U_\lambda^{\varepsilon_n}f, U_\lambda^{\varepsilon_n}f)}
\sqrt{\iint_{\R^d\times \R^d\setminus \Delta} (g(x+z)- g(x))^2\frac{\kappa (0,z/\varepsilon_n ;\tau_{x/\varepsilon_n }\w)}{|z|^{d+\alpha}}\, \I_{\{|z|\leq \eta\}} \,dz\,dx}.
$$
 This along with \eqref{t2-1-4a} and assumption {\bf (H)}(ii) yield that
$$
\lim_{\eta \to 0} \lim_{n\to \infty}I_1^{n,\eta}=0.
$$
Similarly, we can verify
$$
\lim_{\eta \to 0}\lim_{n\to \infty}I_2^{n,\eta}=0.
$$

Set
$$ \LL^K_{\eta}g(x):=\int_{\R^d}(g(x+z)-g(x))\frac{ K(z)}{|z|^{d+\alpha}}\I_{\Gamma \cap \{\eta<|z|< 1/\eta\}}\,dz.$$
It is obvious that
$$\lim_{\eta\to0 }  \LL^K_{\eta }g(x)= \LL^K g(x),\quad g\in C_c^\infty(\R^d),$$
where
$$
\LL^K g(x):=\text{p.v.}\int_{\R^d}\left(g(x+z)-g(x)\right)\frac{K(z)}{|z|^{d+\alpha}}
\, \I_{\Gamma}(z) \,dz
$$
is the infinitesimal generator with respect to
$(\EE^K,\FF^K)$ on $L^2(\R^d;dx)$. We find that
\begin{align*}
|I_3^{n,\eta}-\langle U^*_\lambda f, \LL^K_{\eta}
g\rangle_{L^2(\R^d;dx)}|
&\le |\langle U_\lambda^{\varepsilon_n}f-  U^*_\lambda f, \LL_{\eta}^{n}g\rangle_{L^2(\R^d;dx)}|
 + |\langle U^*_\lambda f,
 (\LL_{\eta}^{n} g - \LL^K_{\eta} g)\rangle_{L^2(\R^d;dx)}|\\
&=:I_{3,1}^{n,\eta}+ I_{3,2}^{n, \eta}.
\end{align*}
Observe that, for every $x\in \R^d$,
$$
|\LL_\eta^ng(x)|\le 2\|g\|_\infty \eta^{-d-\alpha}\left(\int_{B(0,R_0+1/\eta)}\kappa(x/\e_n,y/\e_n;\w)\,dy\right) \,\I_{\{B(0,R_0+1/\eta)\}}(x).
$$
This along with the H\"older inequality with $p>1$ in assumption {\bf (H)}(iii) yields that
\begin{align*}
I_{3,1}^{n,\eta}\le & c_{11}(\eta,\w) \left(\int_{B(0,R_0+1/\eta)}
|U_\lambda^{\e_n} f(x)-U^*_\lambda f(x)|^{{p}/({p-1})}\,dx\right)^{1-1/p}
\\
&\times\Big(\int_{B(0,R_0+1/\eta)}
\bigg(\int_{B(0,R_0+1/\eta)}\kappa(x/\e_n,y/\e_n;\w)\,dy\Big)^p\,dx\bigg)^{1/p}.
\end{align*}
 Hence, by
\eqref{t2-1-6} and assumption {\bf (H)}(iii), for each fixed $\eta>0$,
$\lim_{n \to \infty}I_{3,1}^{n,\eta}=0$.
On the other hand,
\begin{align*}
&\int_{\R^d} U^*_\lambda f(x) \LL_{\eta}^{n}g(x)\,dx\\
&=\!-\frac{1}{2}\!\iint_{\R^d\times \R^d} \left(U^*_\lambda
f(x+z)\!-U^*_\lambda
f(x)\right) \left(g(x+z)\!-g(x)\right)
\frac{ \kappa (x/\e_n ,(x+z)/\e_n;\w)}{|z|^{d+\alpha}}\, \I_{\Gamma \cap\{\eta<|z|<1/\eta\}} \, dz\,dx.
\end{align*}
Then,  by applying assumption {\bf (H)}(iv), we have
\begin{align*}
&\lim_{n \to \infty}\int_{\R^d} U^*_\lambda f(x) \LL_{\eta}^{n}g(x)\,dx\\
&=-\frac{1}{2}\iint_{\R^d\times \R^d}
(U^*_\lambda f(x+z)-U^*_\lambda f(x)) (g(x+z)-g(x))
\frac{ K(z)}{|z|^{d+\alpha}}\, \I_{\Gamma \cap\{\eta<|z|<1/\eta\}}\,dz\,dx\\
&=\int_{\R^d} U^*_\lambda f(x)  \LL^K_\eta g(x)\,dx,
\end{align*}
which implies $\lim_{n \to \infty}I_{3,2}^{n,\eta}=0$.

Combining all the estimates for $I_i^{n, \eta}$, $i=1,2,3$ with
the fact that $U_\lambda^* f\in L^2(\R^d;dx)$,
first letting $n \to \infty$ and then letting $\eta \to 0$,
we obtain
\begin{equation}\label{e:oopp}
\lim_{n \to \infty}\EE^{\e_n}\big(U_\lambda^{\e_n} f,g\big)=
-\int_{\R^d} U^*_\lambda f(x)  \LL^K g(x)\,dx.
\end{equation}
Putting this with \eqref{e:sy2}, \eqref{t2-1-5a} and \eqref{t2-1-5b} together,
we see
that for any
$f\in C_c(\R^d)$ and $g\in C_c^\infty (\R^d)$,
$$
\langle (\lambda- \LL^K)g, U^*_\lambda f\rangle_{L^2(\R^d;dx)}=\langle
f, g\rangle_{L^2(\R^d; dx)}.
$$
In particular,
since the above holds for any $g\in C^\infty_c(\R^d)$, we have
$U^*_\lambda
f=U^K_\lambda f$; that is, $U^*_\lambda$ is   the
$\lambda$-order resolvent corresponding to the operator $ \LL^K$.
For every $f\in C_c(\R^d)$ and $g\in C_c^\infty (\R^d)$,
$$
\lim_{n \to \infty}\EE^{\e_n}\big(U_\lambda^{\e_n} f,g\big)={{\EE}}^K(U^K_\lambda f, g),
$$
and by \eqref{e:2.6},  $U_\lambda^{\e_n} f$ converges to $U^K_\lambda$ in $L^1(B(0,r); dx)$, as $n\to \infty$, for every $r>1$.
Since these hold for any sequence  $\{\e_n\}_{n\ge1}$ that converges to $0$, we get \eqref{euq-0022} and \eqref{e:2.4}.
The property \eqref{e:2.5} follows from \eqref{t2-1-5a}.
\end{proof}

\subsection{Strong convergence of resolvents}

\begin{theorem}\label{C:note3}
Suppose that assumption {\bf (H)} holds and $\Ee[\mu^p]<\infty$ for some $p>1$. Let $\Omega_2$ be the subset of $\Omega$ in Theorem $\ref{T:2.2}$ that is of full probability measure.
Then for every  $\w\in \Omega_2$ and every  $f\in C_c (\R^d)$,
\begin{equation}\label{e:002}
\lim_{\e \to 0}\|U_\lambda^K f\|_{L^2(\R^d;\mu^{\e,\w}(dx))}=\|U_\lambda^K f\|_{L^2(\R^d;dx)}
\end{equation} and
\begin{equation}\label{t2-1-2}
\lim_{\e \to 0}\|U_\lambda^{\e,\w}f-U_\lambda^Kf \|_{L^2(\R^d;\mu^{\e,\w}(dx))}=0.
\end{equation}
In particular,
\begin{equation}\label{t2-1-2-11}
\lim_{\e \to 0}\|U^{\e,\w}_\lambda f\|_{L^2(\R^d;\mu^{\e,\w}(dx))}= \|U_\lambda^K f\|_{L^2(\R^d; dx)}.
\end{equation}
\end{theorem}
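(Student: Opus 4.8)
The plan is to deduce the strong $L^2$-convergence \eqref{t2-1-2} from the weak convergence already obtained in Theorem \ref{T:2.2}, combined with a ``convergence of energies'' argument that upgrades weak convergence to norm convergence. The starting point is the variational identity: for each $\e>0$,
$$
\lambda \|U_\lambda^{\e,\w}f\|_{L^2(\R^d;\mu^{\e,\w})}^2 + \EE^{\e,\w}(U_\lambda^{\e,\w}f, U_\lambda^{\e,\w}f) = \langle f, U_\lambda^{\e,\w}f\rangle_{L^2(\R^d;\mu^{\e,\w})},
$$
and the analogous identity for $U_\lambda^K f$ on $L^2(\R^d;dx)$. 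First I would establish \eqref{e:002}: this is just the Birkhoff ergodic theorem (Proposition \ref{P:2.1}) applied to $\mu$ with the bounded test function $(U_\lambda^K f)^2$, which is legitimate since $U_\lambda^K f\in L^2(\R^d;dx)\cap B_b(\R^d)$ and has the required integrability; one should be slightly careful since $(U_\lambda^K f)^2$ need not have compact support, so a truncation argument together with the uniform bound $\sup_\e \int_{B(0,R)}\mu^p(\tau_{x/\e}\w)\,dx<\infty$ and $\|U_\lambda^K f\|_\infty\le\lambda^{-1}\|f\|_\infty$ controls the tail.

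\textbf{The core argument.}
The essential step is to show $\limsup_{\e\to0}\big(\lambda\|U_\lambda^{\e,\w}f\|_{L^2(\mu^{\e,\w})}^2 + \EE^{\e,\w}(U_\lambda^{\e,\w}f,U_\lambda^{\e,\w}f)\big) \le \lambda\|U_\lambda^K f\|_{L^2(dx)}^2 + \EE^K(U_\lambda^K f, U_\lambda^K f)$. For the right-hand side, note that by \eqref{euq-0022} and \eqref{e:2.5} with $g=U_\lambda^K f$ (approximating $U_\lambda^K f$ in the appropriate norm by $C_c^\infty$ functions, which is possible since $U_\lambda^K f\in\F^K$), the RHS equals $\langle f, U_\lambda^K f\rangle_{L^2(dx)} = \lim_{\e\to0}\langle f, U_\lambda^K f\rangle_{L^2(\mu^{\e,\w})}$. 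For the left-hand side, using the variational identity the quantity in question is $\langle f, U_\lambda^{\e,\w}f\rangle_{L^2(\mu^{\e,\w})}$; one then argues that $\langle f, U_\lambda^{\e,\w}f\rangle_{L^2(\mu^{\e,\w})} \to \langle f, U_\lambda^K f\rangle_{L^2(dx)}$, which follows from the $L^1(B(0,r);dx)$-convergence \eqref{e:2.4} (since $f$ has compact support, hence is supported in some $B(0,R_0)$) together with the $L^p_{loc}$-convergence of $\mu(\tau_{x/\e}\w)$ and the uniform $L^\infty$ bound on $U_\lambda^{\e,\w}f$—exactly the Hölder-inequality splitting already used in the proof of Theorem \ref{T:2.2}. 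This gives the matching of the total energies: $\lim_{\e\to0}\big(\lambda\|U_\lambda^{\e,\w}f\|_{L^2(\mu^{\e,\w})}^2 + \EE^{\e,\w}(U_\lambda^{\e,\w}f,U_\lambda^{\e,\w}f)\big) = \lambda\|U_\lambda^K f\|_{L^2(dx)}^2 + \EE^K(U_\lambda^K f,U_\lambda^K f)$.

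\textbf{From energy convergence to norm convergence.}
The remaining step is a standard Hilbert-space-type argument, complicated slightly by the fact that the inner products live on different spaces $L^2(\mu^{\e,\w})$. I would expand
$$
\lambda\|U_\lambda^{\e,\w}f - U_\lambda^K f\|_{L^2(\mu^{\e,\w})}^2 + \EE^{\e,\w}(U_\lambda^{\e,\w}f - U_\lambda^K f, U_\lambda^{\e,\w}f - U_\lambda^K f)
$$
into the total energy of $U_\lambda^{\e,\w}f$, plus that of $U_\lambda^K f$ (computed with $\mu^{\e,\w}$ and $\EE^{\e,\w}$), minus twice the cross term. The total energy of $U_\lambda^{\e,\w}f$ converges to $\lambda\|U_\lambda^K f\|_{L^2(dx)}^2 + \EE^K(U_\lambda^K f,U_\lambda^K f)$ by the previous paragraph; the energy of $U_\lambda^K f$ with respect to $(\mu^{\e,\w},\EE^{\e,\w})$ converges to the same limit, using \eqref{e:002} and an analogue of assumption {\bf (H)}(iv) to pass $\EE^{\e,\w}(U_\lambda^K f, U_\lambda^K f)\to\EE^K(U_\lambda^K f,U_\lambda^K f)$ (via the three-region splitting of {\bf (H)}(ii)--(iv), approximating $U_\lambda^K f$ by $C_c^\infty$ functions in $\F^K$); and the cross term $\lambda\langle U_\lambda^{\e,\w}f, U_\lambda^K f\rangle_{L^2(\mu^{\e,\w})} + \EE^{\e,\w}(U_\lambda^{\e,\w}f, U_\lambda^K f)$ converges, by \eqref{e:sy2} applied with $g$ an approximation of $U_\lambda^K f$ together with \eqref{euq-0022} and \eqref{e:2.5}, to $\langle f, U_\lambda^K f\rangle_{L^2(dx)} = \lambda\|U_\lambda^K f\|_{L^2(dx)}^2 + \EE^K(U_\lambda^K f,U_\lambda^K f)$. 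Adding these three, the limit of the squared difference is $L + L - 2L = 0$, and since both terms on the left are non-negative, \eqref{t2-1-2} follows; \eqref{t2-1-2-11} is then immediate from \eqref{t2-1-2} and \eqref{e:002} via the triangle inequality. The main obstacle I anticipate is the bookkeeping needed to justify using $U_\lambda^K f$ (rather than a $C_c^\infty$ function) as a test function in $\EE^{\e,\w}$ and in identities like \eqref{e:sy2}: one must approximate $U_\lambda^K f$ by $g_k\in C_c^\infty(\R^d)$ converging in the $\F^K$-norm and control the resulting errors in the $(\e,\w)$-dependent forms uniformly in $\e$, which again relies on the uniform energy bound \eqref{t2-1-4a} and the Cauchy--Schwarz estimates of assumption {\bf (H)}(ii), exactly as in the handling of $I_1^{n,\eta}$ and $I_2^{n,\eta}$ above.
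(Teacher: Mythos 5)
Your plan for \eqref{t2-1-2}---expand a quadratic form around the difference $U_\lambda^{\e,\w}f-U_\lambda^Kf$ and match energies---is morally the same idea as the paper's, but your decomposition runs into two genuine gaps.

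\textbf{Gap 1: the proof of \eqref{e:002}.} You write that the tail can be controlled by the uniform bound $\sup_\e\int_{B(0,R)}\mu^p(\tau_{x/\e}\w)\,dx<\infty$ together with $\|U_\lambda^Kf\|_\infty\le\lambda^{-1}\|f\|_\infty$. Neither of these controls $\int_{\{|x|\ge R\}}(U_\lambda^Kf)^2\,\mu^{\e,\w}(dx)$: the first is a bound on a fixed ball (the analogous integral over all of $\R^d$ is infinite), and the second gives only boundedness, which is useless against a measure $\mu^{\e,\w}$ that has infinite total mass. What is actually needed, and what the paper derives, is a \emph{pointwise decay estimate} $U_\lambda^Kf(x)\le c\,|x|^{-d-\alpha}$ for $|x|$ large, obtained from heat-kernel upper bounds for the limit process $\EE^K$ (these in turn use Lemma \ref{l5-1}, the Nash-type inequality, the Carlen--Kusuoka--Stroock theorem, and the off-diagonal bound of Barlow--Grigor'yan--Kumagai, relying on $C_1\le K\le C_2$). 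With this decay in hand the tail is estimated on dyadic annuli $Q_k=\{2^{k-1}\le|x|<2^k\}$, using the Birkhoff theorem in the scaled form $\int_{B(0,1/\e)}\mu(\tau_x\w)\,dx\le 2\e^{-d}|B(0,1)|$ (valid for $\e$ small, uniformly over $k$), giving a geometric series. Without the decay estimate, there is no way to close the argument.

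\textbf{Gap 2: the proof of \eqref{t2-1-2}.} Your expansion requires passing $\EE^{\e,\w}(U_\lambda^Kf,U_\lambda^Kf)\to\EE^K(U_\lambda^Kf,U_\lambda^Kf)$ and a similar statement for the cross term $\EE^{\e,\w}(U_\lambda^{\e,\w}f,U_\lambda^Kf)$. You acknowledge this requires approximating $U_\lambda^Kf$ by $g_k\in C_c^\infty$ and controlling the error uniformly in $\e$, and you invoke \eqref{t2-1-4a} for that uniform control. But \eqref{t2-1-4a} bounds $\EE^{\e,\w}(U_\lambda^{\e,\w}f,U_\lambda^{\e,\w}f)$; it says nothing about $\EE^{\e,\w}(U_\lambda^Kf-g_k,U_\lambda^Kf-g_k)$, which is what is needed, and there is no available $\e$-uniform bound on that quantity (indeed it is not even clear that $U_\lambda^Kf\in\FF^{\e,\w}$). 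The paper's resolution is to change the order of operations: for a \emph{fixed} $g\in C_c^\infty$ it uses the variational identity to rewrite $\EE^{\e,\w}_\lambda(U_\lambda^{\e,\w}f-g,U_\lambda^{\e,\w}f-g)=\langle U_\lambda^{\e,\w}f,f\rangle_{L^2(\mu^\e)}-2\langle f,g\rangle_{L^2(\mu^\e)}+\EE^{\e,\w}_\lambda(g,g)$, an expression in which $U_\lambda^Kf$ does not appear, and each term has a clean $\e$-limit (by \eqref{e:2.5}, Proposition \ref{P:2.1}, and {\bf (H)}(ii),(iv) respectively). Only \emph{after} taking $\e\to0$ does one let $g=g_k\to U_\lambda^Kf$ in the $\EE^K_1$-norm, and the resulting limit is zero. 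The remaining piece $\|g_k-U_\lambda^Kf\|_{L^2(\mu^\e)}\to0$ (first $\e\to0$, then $k\to\infty$) is exactly where \eqref{e:002} is used. So the crucial idea you are missing is to keep a smooth compactly supported test function inside all $\e$-dependent bilinear forms throughout the $\e$-limit, deferring the approximation $g_k\to U_\lambda^Kf$ to the final step where it can be done purely in the $\EE^K$-topology.
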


To prove Theorem $\ref{C:note3}$, we need the following lemma.
Recall that
$\Gamma$ is an infinite symmetric cone in $\R^d$ that has non-empty interior.
We define the Dirichlet form $(\EE_0,\FF_0)$
on $L^2(\R^d; dx)$ as follows
\begin{equation}\label{e5-1}
 \EE_0(f,f)=
 \iint_{\R^d \times \R^d \setminus \Delta}
\frac{(f(y)-f(x))^2}{|y-x|^{d+\alpha}} \, \I_{\Gamma}(x-y)\, dx\,dy,\quad f\in \FF_0,
\end{equation}
where $\FF_0$ is the closure of $C_c^\infty(\R^d)$ under the norm
$\big(\EE_0(\cdot,\cdot)+\|\cdot\|^2_{L^2(\R^d;dx)}\big)^{1/2}$.

\begin{lemma}\label{l5-1}There exists a constant $c_0>0$ such that for all
$f\in C^1_c(\R^d)$,
$$
\|f\|^2_{L^2(\R^d;dx)}\le c_0\EE_0(f,f)^{{d}/({d+\alpha})}\|f\|_{L^1(\R^d;dx)}^{{2\alpha}/({d+\alpha})}.
$$
\end{lemma}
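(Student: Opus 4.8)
The plan is to prove a Nash-type (Gagliardo--Nirenberg) inequality for the truncated-cone Dirichlet form $\EE_0$ by passing through the Fourier transform, exactly as in the classical proof of the Nash inequality for the fractional Laplacian. First I would express $\EE_0(f,f)$ spectrally. Since $\I_\Gamma(x-y)+\I_\Gamma(y-x)=\I_{\Gamma\cup(-\Gamma)}(x-y)$ and $\Gamma$ is symmetric with respect to the origin, one has $\I_\Gamma(x-y)=\I_\Gamma(y-x)$, so after the change of variables $z=y-x$,
\begin{equation*}
\EE_0(f,f)=\iint_{\R^d\times\R^d} (f(x+z)-f(x))^2 \frac{\I_\Gamma(z)}{|z|^{d+\alpha}}\,dx\,dz
= \int_{\R^d} |\Fourier{f}(\xi)|^2\, \psi(\xi)\,d\xi,
\end{equation*}
where, by Plancherel applied to $x\mapsto f(x+z)-f(x)$ for each fixed $z$,
\begin{equation*}
\psi(\xi)=\int_{\R^d} \bigl(1-\cos(\scalp{\xi}{z})\bigr)\,\frac{2\,\I_\Gamma(z)}{|z|^{d+\alpha}}\,dz.
\end{equation*}
(Here I normalize the Fourier transform so Plancherel has no constant; any fixed normalization is fine.)

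The second step is the key lower bound $\psi(\xi)\ge c_1 |\xi|^\alpha$ for all $\xi\in\R^d$, for some $c_1>0$. By the scaling $z\mapsto z/|\xi|$ and the rotational change of variables sending $\xi/|\xi|$ to a fixed unit vector, it suffices to show $\inf_{e\in\bS^{d-1}}\int_\Gamma (1-\cos\scalp{e}{z})\,|z|^{-d-\alpha}\,dz>0$; I would note this infimum is attained and positive because $\Gamma$ has nonempty interior (so the integrand is not a.e. zero on $\Gamma$ for any $e$), and the map $e\mapsto \int_\Gamma(1-\cos\scalp{e}{z})|z|^{-d-\alpha}\,dz$ is continuous and strictly positive on the compact sphere. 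This gives
\begin{equation*}
\EE_0(f,f)\ge c_1\int_{\R^d}|\xi|^\alpha |\Fourier f(\xi)|^2\,d\xi.
\end{equation*}

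Third, I would run the standard Nash truncation argument. For any $\rho>0$, split the frequency domain at $|\xi|=\rho$:
\begin{equation*}
\|f\|_{L^2}^2=\int_{\R^d}|\Fourier f(\xi)|^2\,d\xi
\le \int_{|\xi|\le\rho}|\Fourier f(\xi)|^2\,d\xi + \rho^{-\alpha}\int_{|\xi|>\rho}|\xi|^\alpha|\Fourier f(\xi)|^2\,d\xi
\le c_d\,\rho^d\,\|f\|_{L^1}^2 + \frac{\rho^{-\alpha}}{c_1}\,\EE_0(f,f),
\end{equation*}
using $|\Fourier f(\xi)|\le (2\pi)^{-d/2}\|f\|_{L^1}$ (up to the chosen normalization constant) on the low-frequency ball and the bound from the previous step on the high-frequency part. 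Optimizing over $\rho>0$ — the choice $\rho^{d+\alpha}\asymp \EE_0(f,f)/\|f\|_{L^1}^2$ — yields
\begin{equation*}
\|f\|_{L^2}^2\le c_0\,\EE_0(f,f)^{d/(d+\alpha)}\,\|f\|_{L^1}^{2\alpha/(d+\alpha)},
\end{equation*}
which is the claim (and the trivial case $\EE_0(f,f)=0$, forcing $f\equiv 0$ for $f\in C^1_c$, is handled separately or by a limiting argument).

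The main obstacle is the second step: verifying that the symbol satisfies $\psi(\xi)\ge c_1|\xi|^\alpha$ uniformly in the \emph{direction} of $\xi$, which is where the cone being only a proper subset of $\R^d$ (rather than all of $\R^d$) actually matters. The point to be careful about is that $\Gamma$ having nonempty interior guarantees that for every direction $e\in\bS^{d-1}$ the set $\{z\in\Gamma:\scalp{e}{z}\notin 2\pi\Z\}$ has positive measure, so the directional integral never degenerates to zero; combined with the homogeneity of the kernel $|z|^{-d-\alpha}$ this produces the clean $|\xi|^\alpha$ scaling and a strictly positive constant after a compactness argument on $\bS^{d-1}$. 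Everything else is the textbook Nash inequality argument.
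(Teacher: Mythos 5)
Your proof is correct and rests on the same key insight as the paper's: the lower bound $\psi(\xi)\geq c_1|\xi|^\alpha$ for the symbol of the cone-restricted kernel, proved by a compactness argument on $\bS^{d-1}$ using that $\Gamma$ has nonempty interior. The only difference in packaging is that the paper uses the symbol bound to establish the comparability $\iint \frac{(f(y)-f(x))^2}{|y-x|^{d+\alpha}}\,dx\,dy \leq c\,\EE_0(f,f)$ (also citing \cite{BKS} for a more general version) and then invokes the known Nash inequality for the full $\R^d$ kernel from \cite{CK}, whereas you inline the standard Nash truncation-in-frequency argument; both are the same computation once the symbol estimate is in hand, so the two approaches are essentially identical.
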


\begin{proof} This inequality is well-known when $\Gamma=\R^d$ (see, for instance, \cite[Proposition 3.1]{CK}).
By \cite[Theorem 1.1]{BKS}, there is a constant $c_1>0$ which may depend on $\Gamma$ so that
for every $f\in L^2 (\R^d; dx)$,
\begin{equation}\label{e:2.18}
 \iint_{\R^d \times \R^d \setminus \Delta}
\frac{(f(y)-f(x))^2}{|y-x|^{d+\alpha}}  \, dx\,dy\le
c_1
 \iint_{\R^d \times \R^d \setminus \Delta}
\frac{(f(y)-f(x))^2}{|y-x|^{d+\alpha}} \, \I_{\Gamma}(x-y)\, dx\,dy.
 \end{equation}
This immediately gives the desired result.  In fact, the result of \cite[Theorem 1.1]{BKS} is more general which shows that \eqref{e:2.18}
holds with $\R^d $ being replaced by any ball $B$ in $\R^d$ on both sides of \eqref{e:2.18}. In the
$\R^d$ case, one can establish \eqref{e:2.18} directly by using the Fourier transform. For reader's convenience, we give a such proof below.

For $f\in C^1_c (\R^d)$,
let
$$
\hat f(\xi):=(2\pi)^{-d/2}\int_{\R^d} e^{i\langle \xi,y\rangle} f(y) \,dy
$$
 be
the Fourier transform of $f$. Then,
$$
\int_{\R^d} f(x)^2\,dx=\int_{\R^d}|\hat f(\xi)|^2\, d\xi,\quad \EE_0(f,f)=\int_{\R^d}|\hat f(\xi)|^2\phi(\xi)\,d\xi,
$$
where
\begin{equation}\label{e:2.19}
\phi(\xi)=\int_{\Gamma}\frac{1-e^{i\langle \xi, z\rangle}}{|z|^{d+\alpha}}\,dz
=\int_{\Gamma}\frac{1-\cos\langle \xi, z\rangle}{|z|^{d+\alpha}}\,dz.
\end{equation}
Let $\bS^{d-1}$ be the unit sphere in $\R^d$ and $\Theta = \Gamma \cap \bS^{d-1}$. Denote by $z=(r, \theta)$ the spherical coordinates for $0\neq z\in\R^d$, and $\sigma$ the Lebesgue surface measure on $\bS^{d-1}$.
Since $\Theta$
has non-empty interior,
there are positive constants $\delta_1$ and $\delta_2$ such that
for any $\eta\in \Ss^{d-1}$, there exists
$A(\eta)  \subset \Theta$ (which may depend on $\eta$) so that
$\sigma (A(\eta)) \geq \delta_1$ and
$$
1-\cos(r\langle \eta,\theta_0\rangle)\ge \delta_2
\quad \hbox{ for all } 1/2\le r\le 1 \hbox{ and } \theta_0\in A(\eta).
$$
Thus for any $\xi\in \R^d$,
\begin{equation}\begin{split}\label{e:2.20--}
\phi(\xi)& = c_2|\xi|^\alpha\int_0^\infty r^{-1-\alpha}\int_{\Theta}\big(1-\cos(r\langle {\xi}/{|\xi|},\theta\rangle)\big)\,\sigma (d\theta)\,dr  \\
&\ge  c_3|\xi|^\alpha\int_{{1}/{2}}^1  \int_{ A(\xi/|\xi|)}\big(1-\cos(r\langle {\xi}/{|\xi|},\theta\rangle)\big)\,d\theta\, dr
\ge \frac{c_3\delta_1\delta_2}{2}|\xi|^\alpha.
\end{split}\end{equation}
Therefore, for every $f\in C^1_c (\R^d)$,
$$
\EE_0(f,f)\geq c_4 \int_{\R^d}|\hat f(\xi)|^2|\xi |^\alpha \,d\xi
=c_4  \iint_{\R^d \times \R^d \setminus \Delta}
\frac{(f(y)-f(x))^2}{|y-x|^{d+\alpha}}  \, dx\,dy.
$$
This establishes \eqref{e:2.18}.
\end{proof}

 \medskip

\begin{proof}[Proof of Theorem $\ref{C:note3}$]
Throughout this proof,  we again suppress  the parameter $\w$ for simplicity.
We claim  that \eqref{e:002} holds for any $f\in C_c (\R^d)$ and $\w\in \Omega_2$.
Indeed, let $X:=\{X_t, t\ge 0\}$ be the symmetric   L\'evy
 process associated with the Dirichlet form $(\EE^K,\FF^K)$ on $L^2(\R^d; dx)$.
 The L\'evy process $X$ has L\'evy exponent
 $$
 \psi (\xi)  = \int_{\Gamma} \left( 1-\cos\langle \xi, z\rangle \right)\frac{K(z) }{|z|^{d+\alpha}}\,dz,
 \quad \xi \in \R^d.
 $$
 Since $0<C_1\leq K(z) \leq C_2$, we have by \eqref{e:2.19} and \eqref{e:2.20--} that $\psi (\xi) \geq c_0 |\xi|^\alpha$ on $\R^d$.
 Hence the L\'evy process $X$ has a jointly continuous transition density function $p(t, x, y)=p(t, x-y)$ with respect to the Lebesgue
 measure on $\R^d$, where
 $$
 p(t, x) := (2\pi)^{-d/2}\int_{\R^d}e^{-i\langle \xi, x\rangle} e^{-t \psi (\xi)} \,d\xi , \quad x\in \R^d.
 $$
Since
$$
\E_0(g ,  g)\le C_1^{-1}\E^K(g, g) \quad \hbox{for any } g\in C_c^1(\R^d),
$$
we have by Lemma \ref{l5-1} that
$$
\|g\|^2_{L^2(\R^d;dx)}\le c_1\big(\EE^K(g, g)\big)^{{d}/({d+\alpha})}\|g\|_{L^1(\R^d;dx)}^{{2\alpha}/({d+\alpha})},
\quad g\in C_c^1 (\R^d).
$$
This along with  \cite[Theorems (2.1) and (2.9)]{CKS} yields
$$ p(t,x-y)\le c_2t^{-d/\alpha} \quad \hbox{for every } t>0
  \hbox{ and }  x,y\in \R^d.
 $$
Since $K(z)\le C_2$ on $\R^d$, by the proof of \cite[Theorem 1.4]{BGK}, we in fact have
$$
p(t,x-y)\le c_3\left(t^{-d/\alpha}\wedge \frac{t}{|x-y|^{d+\alpha}}\right) \quad \hbox{for } t>0
\hbox{ and }  x,y\in \R^d.
$$

In the following,
we fix $f\in C_c  (\R^d)$. Choose $R_0\geq 1 $ so that $\text{supp} [ f]\subset
B(0,R_0)$. For any $x\in \R^d$ with $|x|>2R_0$,
\begin{equation}\label{e:2.21}\begin{split}
U_\lambda^K f(x)&=  \int_0^\infty\int_{\R^d} e^{-\lambda t}p(t,x-y)f(y)\,dy\,dt  \\
&\le  c_4\|f\|_\infty \int_0^\infty \int_{B(0,R_0)} e^{-\lambda t}\left(t^{-d/\alpha}\wedge \frac{t}{|x-y|^{d+\alpha}}\right)\,dy\,dt
\le  c_5\, |x|^{-d-\alpha}.
\end{split}\end{equation}
where $c_5>0$ is a constant that depends on $\|f\|_\infty$, $\lambda$ and $R_0$.

As mentioned in the beginning of Theorem \ref{T:2.2}, $\Omega_2=\Omega_0 \cap \Omega_1$, where $\Omega_0$ and $\Omega_1$ are
the subset of $\Omega$ in assumption {\bf (H)} and  in Proposition \ref{P:2.1}, respectively.
According to Proposition \ref{P:2.1},
for every
$\w \in \Omega_2$, $k\ge 0$ and every $\varphi \in L^\infty (B(0,2^k); dx)$
\begin{equation}\label{e:2.20}
\lim_{\e \to 0} \int_{B(0,2^k)}\varphi(x)\mu (\tau_{x/\e} \w) \,dx =
\bE [\mu ] \int_{B(0,2^k)}  \varphi (x) \,dx=\int_{B(0,2^k)}  \varphi (x) \,dx.
\end{equation}
In particular, taking $\varphi\equiv 1$ and $k=0$ yields that
\begin{equation*}
\lim_{\e \to 0}\e^{d}\int_{B(0,1/\e)}\mu(\tau_x\w)\,dx
=\lim_{\e \to 0} \int_{B(0,1)}
\mu (\tau_{x/\e} \w) \,dx  =|B(0,1)|.
\end{equation*}
Hence, for every $\w\in \Omega_2$, there exists $\e_0(\w)>0$ such that
\begin{equation}\label{p4-2-1a}
\int_{B(0,1/\e)}\mu(\tau_x\w)\,dx\le 2\e^{-d}|B(0,1)|\quad \hbox{for all } \e\in (0,\e_0(\w)).
\end{equation}
Let $Q_k:=\left\{x\in \R^d:  2^{k-1} \leq |x| <2^k \right\}$ for $k\geq 1$. Therefore, by \eqref{e:2.21} and
\eqref{p4-2-1a}, we obtain for every $\w\in \Omega_2$ and $N\ge 1$ with $2^N>\max\{2R_0,\e_0(\w)^{-1}\}$,
\begin{equation}\label{p4-2-1}\begin{split}
  \limsup_{\e \to 0}
 \int_{\{|x|\ge 2^N\}}U_\lambda^K f(x)^2\mu (\tau_{x/\e} \w )\,dx
 &=\limsup_{\e \to 0} \sum_{k=N+1}^\infty  \int_{Q_k}U_\lambda^K f(x)^2 \mu (\tau_{x/\e} \w )\,dx   \\
 &\leq c_5^2  \, \limsup_{\e \to 0}  \sum_{k=N+1}^\infty  2^{-2(k-1)(d+\alpha)}
  \int_{B(0,2^{k+1})}
  \mu (\tau_{x/\e} \w )\,dx    \\
  &=c_5^2 \limsup_{\e \to 0}  \sum_{k=N+1}^\infty  2^{-2(k-1)(d+\alpha)}
  \e^d\int_{B(0,2^{k+1}/\e)}
  \mu (\tau_{x} \w )\,dx\\
  &\leq c_6 \sum_{k=N}^\infty    2^{-  2k(d+\alpha) }\e^d\left(2^{k+1}/\e\right)^d
 \le c_7 2^{-(d+2\alpha) N}.
 \end{split}\end{equation}
On the other hand, according to \eqref{e:2.20}, for every $\w \in \Omega_2$,
$$
\lim_{\e\to 0}\int U_\lambda^K f(x)^2\I_{\{|x| < 2^N\}}\mu(\tau_{x/\e } \w)\,dx=
\int U_\lambda^K f(x)^2\I_{\{|x| < 2^N\}}\,dx\quad \hbox{for every } N\ge1.
$$
 This together with \eqref{p4-2-1} and by taking $N\to \infty$ gives  us \eqref{e:002}.

For $\lambda >0$, define
$$
\EE^\e_\lambda  (u, v)= \EE^\e (u, v) + \lambda \<u, v \>_{L^2(\R^d; \mu^\e (dx))}  \quad \hbox{for } u, v\in \FF^\e,
$$
and
$$
 \EE^K_\lambda  (u, v)= \EE^K (u, v) + \lambda \<u, v\>_{L^2(\R^d;dx)}  \quad \hbox{for } u, v \in \FF^K.
$$
For $f\in C_c(\R^d)$ and $g\in C_c^\infty (\R^d)$,
\begin{equation}\label{p4-2-2}
\begin{split}
\< U_\lambda ^\e f, f\>_{L^2(\R^d; \mu^\e (dx))}
&=  \EE_\lambda^{\e}\big(U_\lambda^{\e}f,U_\lambda^{\e}f\big) \\
&= \EE_\lambda^{\e}\big(U_\lambda^{\e}f-g,U_\lambda^{\e}f-g\big)+
\EE_\lambda^{\e} \big(g,g\big)+2\EE_\lambda^{\e} \big(U_\lambda^{\e}f-g,g\big)\\
&= \EE_\lambda^{\e}\big(U_\lambda^{\e}f-g,U_\lambda^{\e}f-g\big)
 +2\< f,g\>_{L^2(\R^d; \mu^\e(dx))} -\EE^{\e}_\lambda \big(g,g\big).
\end{split}
\end{equation}
By (ii) and (iv) of assumption {\bf (H)} and Proposition \ref{P:2.1},
 $$
 \lim_{\e \to 0} \EE^{\e}_\lambda  (g,g) =\EE^K_\lambda(g,g)
\quad \hbox{and} \quad  \lim_{\e \to 0}
\< f,g\>_{L^2(\R^d; \mu^\e(dx))} = \< f, g\>_{L^2(\R^d; dx)}.
$$
Hence after taking $\e \to 0$ in \eqref{p4-2-2}, we get together with \eqref{e:2.5} that
\begin{equation}\label{e:2.22}
\lim_{\e \to 0} \EE_\lambda^{\e}\big(U_\lambda^{\e}f-g,U_\lambda^{\e}f-g\big)
= -2 \< f, g\>_{L^2(\R^d; dx)} + \EE^K_\lambda(g,g) + \< U^K_\lambda f, f\>_{L^2(\R^d; dx)}.
\end{equation}
Take $\{g_k; k\geq 1\}\subset C^\infty_c(\R^d)$ so that $g_k \to U^K_\lambda f$ in $\sqrt{\EE^K_1}$-norm.
Then
\begin{align*}
& \lim_{k\to \infty}
\left( 2 \< f, g_k\>_{L^2(\R^d; dx)} - \EE^K_\lambda(g_k,g_k) - \< U^K_\lambda f, f\>_{L^2(\R^d; dx)}\right) \\
&=2 \< f, U^K_\lambda f\>_{L^2(\R^d; dx)} - \EE^K_\lambda(U^K_\lambda f, U^K_\lambda f) - \< U^K_\lambda f , f\>_{L^2(\R^d; dx)} = 0.
\end{align*}
In particular, we have by \eqref{e:2.22} that
\begin{equation}\label{e:2.23}
\lim_{k\to \infty} \lim_{\e \to 0} \| U_\lambda^{\e}f-g_k\|_{L^2(\R^d; \mu^\e (dx))}=0.
\end{equation}
On the other hand,  by
Proposition \ref{P:2.1},
\eqref{e:2.5}
 and  \eqref{e:002},
\begin{align*}
\lim_{\e \to 0} \| g_k- U_\lambda^Kf \|^2_{L^2(\R^d; \mu^\e (dx))}
&= \lim_{\e \to 0} \left(  \| g_k\|^2_{L^2(\R^d; \mu^\e (dx))}- 2 \<U^K_\lambda f, g_k\>_{L^2(\R^d; \mu^\e (dx))}
         + \| U_\lambda^Kf \|^2_{L^2(\R^d; \mu^\e (dx))} \right)\\
&= \| g_k\|^2_{L^2(\R^d; dx)}- 2 \<U^K_\lambda f, g_k\>_{L^2(\R^d; dx)}
          +  \| U_\lambda^Kf \|^2_{L^2(\R^d; dx)} .
\end{align*}
Hence we have
$$
\lim_{k\to \infty} \lim_{\e \to 0} \|g_k -  U_\lambda^Kf \|^2_{L^2(\R^d; \mu^\e (dx))} =0.
$$
This together with \eqref{e:2.23} gives  \eqref{t2-1-2}.

Consequently, we have by  \eqref{e:002} that
$$
\lim_{\e \to 0}  \| U_\lambda^\e f  \|_{L^2(\R^d; \mu^\e (dx))}
=    \lim_{\e \to 0}  \| U_\lambda^K f \|_{L^2(\R^d; \mu^\e (dx))}
= \| U^K_\lambda f \|_{L^2(\R^d; dx)}.
$$
This completes the proof of the theorem.
\end{proof}

The proof for the property \eqref{t2-1-2} (in particular see \eqref{e:2.23} and \eqref{e:kkk} below) immediately implies the following.

\begin{corollary}\label{C:note333}
Assume that assumption  {\bf (H)} holds and  $\bE [ \mu^p]<\infty $ for some $p>1$.
Let  $\Omega_2\subset \Omega $  be as in Theorem $\ref{T:2.2}$
of full probability measure. Then for every $\w \in \Omega_2$ and every $f\in C_c (\R^d)$,
$U^{\e}_\lambda f$ strongly converges
in $L^2$-spaces to
 $U_\lambda^K f$ as $\e\to0$.
\end{corollary}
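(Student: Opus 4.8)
The plan is to read off this corollary from the proof of Theorem~\ref{C:note3}, after recalling what strong convergence of vectors along a convergent sequence of $L^2$-spaces means. The relevant Hilbert spaces are $H_\e:=L^2(\R^d;\mu^{\e,\w}(dx))$ and $H:=L^2(\R^d;dx)$; taking $C_c^\infty(\R^d)$ as common core with the identity embeddings, Proposition~\ref{P:2.1} together with $\bE[\mu]=1$ shows that for every $\w\in\Omega_2$ and every $g\in C_c^\infty(\R^d)$ one has $\|g\|_{H_\e}\to\|g\|_{H}$ as $\e\to0$, so that $(H_\e)_{\e>0}$ converges to $H$ in the required sense. In this framework a family $u_\e\in H_\e$ is said to converge strongly to $u\in H$ provided $\limsup_{\e\to0}\|u_\e\|_{H_\e}<\infty$ and there exist $g_k\in C_c^\infty(\R^d)$ with $g_k\to u$ in $H$ and $\lim_{k\to\infty}\limsup_{\e\to0}\|u_\e-g_k\|_{H_\e}=0$.

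First I would verify the uniform norm bound: by contractivity of the resolvent (see \eqref{t2-1-4a}) one has $\|U_\lambda^{\e,\w}f\|_{H_\e}\le\lambda^{-1}\|f\|_{H_\e}$ for every $\e\in(0,1]$, and since $f\in C_c(\R^d)$, Proposition~\ref{P:2.1} gives $\lim_{\e\to0}\|f\|_{H_\e}=\|f\|_{H}$; hence $\limsup_{\e\to0}\|U_\lambda^{\e,\w}f\|_{H_\e}\le\lambda^{-1}\|f\|_{H}<\infty$. Next I would invoke the approximating sequence $\{g_k;k\ge1\}\subset C_c^\infty(\R^d)$ already produced in the proof of Theorem~\ref{C:note3}, chosen so that $g_k\to U_\lambda^Kf$ in the $\sqrt{\EE^K_1}$-norm and hence, a fortiori, $g_k\to U_\lambda^Kf$ in $H=L^2(\R^d;dx)$; for this very sequence, \eqref{e:2.23} states precisely that $\lim_{k\to\infty}\limsup_{\e\to0}\|U_\lambda^{\e,\w}f-g_k\|_{H_\e}=0$.

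Putting these two facts together, both conditions in the definition of strong convergence are met for $u_\e=U_\lambda^{\e,\w}f$ and $u=U_\lambda^Kf$, for every $\w\in\Omega_2$ and $f\in C_c(\R^d)$, which is the assertion of the corollary. I do not expect a genuine obstacle here, since all the analytic content---the $L^1_{loc}$-precompactness coming from assumption~{\bf (H)}(i), the identification $U_\lambda^*f=U_\lambda^Kf$ of the limit, the energy bound \eqref{t2-1-4a}, and the quantitative approximation \eqref{e:2.23}---has already been carried out in the proofs of Theorems~\ref{T:2.2} and~\ref{C:note3}. The only mild point to keep in mind is that the approximating family witnessing strong convergence may be taken inside $C_c^\infty(\R^d)$; this is legitimate because $C_c^\infty(\R^d)$ is, by definition, dense in $\F^K$ for the $\sqrt{\EE^K_1}$-norm and therefore also dense in $H=L^2(\R^d;dx)$, so the $g_k$ above do the job.
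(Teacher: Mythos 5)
Your proof is correct and is essentially the paper's own argument: the paper states that Corollary~\ref{C:note333} follows immediately from \eqref{e:2.23} together with the definition \eqref{e:kkk} of strong $L^2$-convergence, and you have spelled this out, identifying the approximants $g_k\in C_c^\infty(\R^d)$ with $g_k\to U_\lambda^K f$ in $\sqrt{\EE^K_1}$-norm (hence in $L^2$) and invoking \eqref{e:2.23} to get the second condition in \eqref{e:kkk}. The only superfluous step is your verification of the uniform resolvent bound $\limsup_\e\|U_\lambda^{\e,\w}f\|_{H_\e}<\infty$, which is not part of the paper's definition \eqref{e:kkk} (it follows automatically from the two stated conditions), but this does no harm.
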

The definition of strong convergence in $L^2$-spaces with changing reference measures
can be found in the appendix of this paper.
Corollary \ref{C:note333} can also be proved
by using the
Mosco convergence of Dirichlet
forms -- See Subsection \ref{Mso-s} in the appendix for this alternative approach.

\section{Either assumption {\bf (A)} or   {\bf (B)} implies assumption {\bf (H)}}\label{S:3}

In the section, we will prove that
 either assumption {\bf (A)} or  {\bf (B)} implies assumption {\bf (H)}.
 For this, we first consider the weak convergence of non-local bilinear forms, and then study the compactness of functions with uniformly bounded Dirichlet
forms.
We need the following lemma, which is an extension of Proposition \ref{P:2.1}.

\begin{lemma}\label{L:3.2}
Let $(\Omega, \FF, \Pp)$ be a probability space
on which there is a stationary and ergodic measurable group of  transformations $\{\tau_x\}_{x\in \R^d}$
with $\tau_0 = {\rm id}$.
\begin{itemize}
\item[(i)]  Suppose that $\nu(z;\w)$ is a non-negative measurable function on $\R^d\times \Omega$ such that the function $z\mapsto \Ee
  \left[ \nu(z; \cdot)^p\right]$ is locally integrable for some $p>1$.
 Then there is a subset $\Omega_0 \subset \Omega$ of full probability measure so that for every $\w \in \Omega_0$
 and  every compactly supported $f\in L^q(\R^d\times \R^d; dx\,dy)$ with $q=p/(p-1)$,
\begin{equation}\label{l6-1-0}
\lim_{\varepsilon\to 0}
 \iint_{\R^d\times \R^d} f(x,z)\nu(z;\tau_{x/\varepsilon}\w)\,dz\,dx =\iint_{\R^d\times \R^d} f(x,z) \Ee \left[ \nu(z;\cdot) \right]\,dz\,dx.
\end{equation}

\item[(ii)] Suppose that $\nu_1$ and $\nu_2$ are two non-negative random variables on $( \Omega, \FF,  \bP)$ so that
$\bE \left[ \nu_1^p+\nu_2^p\right]<\infty$ for some $p>1$.
 Then there is a subset $\Omega_0 \subset \Omega$ of full probability measure  so that for every $\w \in \Omega_0$
 and  every compactly supported $f\in L^q(\R^d\times \R^d; dx\,dy)$ with $q=p/(p-1)$,
\begin{equation}\label{e:3.3}
\lim_{\varepsilon\to 0}
 \iint_{\R^d\times \R^d} f(x,y) \nu_1(\tau_{x/\varepsilon}\w) \nu_2(\tau_{y/\varepsilon}\w) \,dx\,dy =
 \Ee \left[ \nu_1 \right] \Ee \left[ \nu_2 \right] \iint_{\R^d\times \R^d} f(x,y) \,dx\,dy.
\end{equation}

\end{itemize}
 \end{lemma}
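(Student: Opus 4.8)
\emph{Proof strategy.}
The plan is to reduce both statements to the Birkhoff-type ergodic theorem of Proposition \ref{P:2.1}. First I would establish \eqref{l6-1-0} and \eqref{e:3.3} for test functions of product form $f(x,z)=g(x)h(z)$ with $g,h\in C_c(\R^d)$, and then extend to a general compactly supported $f\in L^q$ by density, the extension requiring a uniform-in-$\e$ bound that itself comes from a second application of Proposition \ref{P:2.1}.

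For part (i) and $h\in C_c(\R^d)$ with $h\ge 0$, the point is that $\Phi_h(\w):=\int_{\R^d}h(z)\,\nu(z;\w)\,dz$ is a nonnegative random variable on $(\Omega,\FF,\Pp)$ with $\Phi_h(\tau_x\w)=\int_{\R^d}h(z)\,\nu(z;\tau_x\w)\,dz$, and that it is integrable: by Jensen's inequality $\bE[\nu(z;\cdot)]^p\le \bE[\nu(z;\cdot)^p]$, so $z\mapsto\bE[\nu(z;\cdot)]$ is locally $L^p$, hence locally $L^1$, and $\bE[\Phi_h]=\int h(z)\bE[\nu(z;\cdot)]\,dz<\infty$. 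Proposition \ref{P:2.1} applied to $\Phi_h$ then yields a $\Pp$-null set $N_h$ such that for $\w\notin N_h$ and every $g\in C_c(\R^d)$,
\[
\iint_{\R^d\times\R^d}g(x)h(z)\,\nu(z;\tau_{x/\e}\w)\,dz\,dx=\int_{\R^d}g(x)\,\Phi_h(\tau_{x/\e}\w)\,dx\ \longrightarrow\ \bE[\Phi_h]\int_{\R^d}g(x)\,dx,\qquad \e\to0.
\]
Writing $h=h^+-h^-$ this extends to signed $h\in C_c(\R^d)$, and by linearity to every finite sum $\sum_i g_i(x)h_i(z)$.

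To pass to general $f$, I would fix countable families $\{g_i\},\{h_j\}\subset C_c(\R^d)$ whose finite rational linear combinations of the products $g_i\otimes h_j$ are dense among the compactly supported elements of $L^q(\R^d\times\R^d)$, and simultaneously apply Proposition \ref{P:2.1} to the integrable random variables $\Psi_R(\w):=\int_{B(0,R)}\nu(z;\w)^p\,dz$, $R\in\N$ (note $\bE[\Psi_R]=\int_{B(0,R)}\bE[\nu(z;\cdot)^p]\,dz<\infty$). Intersecting all the resulting null sets gives a full-measure $\Omega_0$ such that, for $\w\in\Omega_0$: the displayed convergence holds along every such combination; and $\limsup_{\e\to0}\iint_{B(0,R)^2}\nu(z;\tau_{x/\e}\w)^p\,dz\,dx=\limsup_{\e\to0}\int_{B(0,R)}\Psi_R(\tau_{x/\e}\w)\,dx<\infty$ for every $R$. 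Then, for $f$ supported in $B(0,R)^2$ and an approximating sequence $f_k$ from the countable family with $\|f-f_k\|_{L^q}\to0$, Hölder's inequality bounds $\iint|f-f_k|\,\nu(z;\tau_{x/\e}\w)\,dz\,dx$ by $\|f-f_k\|_{L^q}$ times a quantity bounded uniformly for small $\e$, and bounds $\iint|f-f_k|\,\bE[\nu(z;\cdot)]\,dz\,dx$ by $\|f-f_k\|_{L^q}\,\bigl(|B(0,R)|\int_{B(0,R)}\bE[\nu(z;\cdot)^p]\,dz\bigr)^{1/p}$; letting first $\e\to0$ and then $k\to\infty$ gives \eqref{l6-1-0}.

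Part (ii) is analogous but simpler, because on product test functions the integral \emph{factorizes}:
\[
\iint_{\R^d\times\R^d}g(x)h(y)\,\nu_1(\tau_{x/\e}\w)\nu_2(\tau_{y/\e}\w)\,dx\,dy=\Big(\int_{\R^d}g(x)\nu_1(\tau_{x/\e}\w)\,dx\Big)\Big(\int_{\R^d}h(y)\nu_2(\tau_{y/\e}\w)\,dy\Big),
\]
so Proposition \ref{P:2.1} applied separately to $\nu_1$ and to $\nu_2$ (legitimate since $\bE[\nu_1^p+\nu_2^p]<\infty$) makes each factor converge, hence the product converges to $\bE[\nu_1]\bE[\nu_2]\bigl(\int g\bigr)\bigl(\int h\bigr)$; the density step then uses the same factorization for $\nu_1^p,\nu_2^p$ together with Proposition \ref{P:2.1} applied to $\nu_1^p$ and $\nu_2^p$ to produce the uniform-in-$\e$ bound. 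The only points requiring care are assembling a single exceptional null set valid for all test functions — handled above by the countable dense families, using that the product-form case reduces \emph{exactly} to Proposition \ref{P:2.1} — and the uniform $L^p$ control needed for the density argument, which is again just the ergodic theorem applied to $\nu^p$ (respectively $\nu_1^p,\nu_2^p$); I do not expect a deeper obstacle.
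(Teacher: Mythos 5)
Your proposal is correct and is essentially the paper's own proof: reduce to a countable dense family of separable test functions, apply the Birkhoff ergodic theorem (Proposition \ref{P:2.1}) to the integrated random variable to get an almost-sure exceptional set that works for all of them, and then close the density argument with H\"older and a uniform-in-$\e$ $L^p$ bound coming from a second application of the ergodic theorem to the $p$-th power. The only cosmetic difference is your choice of continuous product test functions and $\Phi_h,\Psi_R$ where the paper uses indicators of rational boxes and its class $\mathscr{S}$; the logical structure, and in particular the way the $L^p$-convergence of $\nu^p$ is used to uniformize the H\"older step, is identical.
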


\begin{proof}
 (i) By the Fubini theorem  and Proposition \ref{P:2.1},
  for any bounded $A,B\in \mathscr{B}(\R^d)$ and a.s.\ $\w\in \Omega$,
\begin{equation}\label{e:3.2}\begin{split}
\lim_{\e\to 0}\iint_{\R^d\times \R^d} \I_{A\times B}(x,z) \nu(z;\tau_{x/\e }\w)\,dz\,dx
&= \lim_{\e \to 0}\int_{A} \left(\int_B \nu(z;\tau_{x/\e})\,dz\right)\,dx
= \int_A\Ee \left[\int_B \nu(z;\cdot)\,dz\right]\,dx   \\
&= \iint_{\R^d\times \R^d}  \I_{A\times B}(x,z) \Ee \left[ \nu(z;\cdot) \right]\, dz\,dx.
\end{split}\end{equation}
The above also holds with $\nu(z; \w)^p$ in place of $\nu(z; \w)$.

Let
\begin{align*}
\mathscr{S}:=\bigg\{f(x,z)=\sum_{i=1}^m a_i\I_{A_i\times
B_i}(x,z):\ m\in \{1,2,\cdots\},\
a_i\in \mathbb{Q},\ A_i,B_i\in \mathscr{B}_{\mathbb{Q}}(\R^d)  \bigg\}.
\end{align*}
Here $\mathbb{Q}$ denotes the set of all rational numbers, and
$\mathscr{B}_{\mathbb{Q}}(\R^d)$ denotes the collection of all bounded
cubes in $\R^d$ whose end points are rational numbers.
Then there is a set $\Omega_0 \subset \Omega$ of full probability measure  so that
for every $\omega \in \Omega_0$,
\eqref{l6-1-0} holds  for every  $f\in \mathscr{S}$, and
 \begin{equation}\label{e:3.5}
\lim_{\e\to 0}\iint_{\R^d\times \R^d} \I_{A\times B}(x,z) \nu(z;\tau_{x/\e }\w)^p \,dz\,dx
= \iint_{\R^d\times \R^d}  \I_{A\times B}(x,z) \Ee \left[ \nu(z;\cdot)^p \right]\, dz\,dx.
\end{equation}

For general compactly supported  $f\in L^q (\R^d\times \R^d;dx\,dy)$, take $A, B\in \mathscr{B}_{\mathbb{Q}}(\R^d)$
so that  $ {\rm supp} [f] \subset A\times B$. Since $C_b(A\times B)$ is dense in $L^q (A\times B;dx\,dy)$
and $\mathscr{S}$ is dense in $C_b(A\times B)$ under the uniform norm,
we can find a sequence
$\{\varphi_n\}_{n\ge1}\subset \mathscr{S}$ such that
$$
\lim_{n \to \infty}\|\varphi_n-f\|_{L^q(A\times B;dx\,dy)}=0.
$$
 Note that  $q=p/(p-1)>1$ is  the conjugate of $p>1$. It follows \eqref{e:3.2} and \eqref{e:3.5}, as well as the fact \eqref{l6-1-0} holds  for every  $f\in \mathscr{S}$ and $\omega \in \Omega_0$, that for every $\w \in \Omega_0$,
\begin{align*}
&  \limsup_{\e\to 0} \left|\iint_{\R^d\times \R^d} f(x,z)\nu(z;\tau_{x/\e }\w)\,dz\,dx-\iint_{\R^d\times \R^d}
f(x,z)\Ee \left[ \nu(z;\cdot)\right]  \,dz\,dx\right|\\
&\le  \limsup_{\e\to 0} \left|  \iint_{\R^d\times \R^d} (f (x,z)-\varphi_n(x,z)) \nu(z;\tau_{x/\e }\w)\,dz\,dx  -\iint_{\R^d\times \R^d}
( f(x,z) -\varphi_n(x,z)
) \Ee \left[ \nu(z;\cdot)\right] \, dz\,dx \right| \\
&\quad    + \limsup_{\e\to 0} \left| \iint_{A\times B} \varphi_n(x,z)\nu(z;\tau_{x/\e }\w)\,dz\,dx-\iint_{A\times B}
\varphi_n(x,z)\Ee \left[ \nu(z;\cdot)\right] \, dz\,dx \right| \\
&\leq   \limsup_{\e\to 0}       \iint_{ A\times B } |f(x,z)-\varphi_n (x, z)| \nu(z;\tau_{x/\e }\w)\,dz\,dx + \iint_{ A\times B }
  |f(x,z)-\varphi_n (x, z)| \Ee \left[ \nu(z;\cdot)\right]  \,dz\,dx\\
   &\leq  \| f-\varphi_n \|_{L^q(A\times B;dx\,dy)}        \left[  \limsup_{\e\to 0}\left( \iint_{A\times B }  \nu(z;\tau_{x/\e }\w)^p\,dz\,dx\right)^{1/p}
   + \left( \iint_{A\times B } \left(  \Ee \left[ \nu(z;\cdot)\right] \right)^p dz\,dx \right)^{1/p}\right]\\
  &\leq      2 \| f-\varphi_n \|_{L^q(A\times B;dx\,dy)}    \left( \iint_{A\times B }  \bE \left[ \nu(z; \cdot )^p\right]\, dz\,dx\right)^{1/p}.
   \end{align*}
By taking $n\to \infty$, we get
$$
\limsup_{\e\to 0} \left|\iint_{\R^d\times \R^d} f(x,z)\nu(z;\tau_{x/\e }\w)\,dz\,dx-\iint_{\R^d\times \R^d}
f(x,z)\Ee \left[ \nu(z;\cdot)\right] \,dz\,dx\right| =0.
$$
This establishes \eqref{l6-1-0}.

(ii) Note that by Proposition \ref{P:2.1} that there is a subset $\Omega_0\subset \Omega$ of full probability measure so that
for every $\w \in \Omega_0$,  any bounded $A,B\in \mathscr{B}(\R^d)$ and $\gamma=1$ or $p$,
\begin{align*}
  \lim_{\e\to 0}\iint_{\R^d\times \R^d} \I_{A\times B}(x,z)  \left( \nu_1 (\tau_{x/\e }\w)\nu_2 (\tau_{y/\e }\w) \right)^\gamma \,dx\,dy
&=\lim_{\e \to 0} \left( \int_{A}  \nu_1 (\tau_{x/\e }\w)^\gamma \,dx \right)  \left( \int_B \nu_2 (\tau_{y/\e }\w)^\gamma \,dy \right)  \\
 &=  \bE \left[\nu_1^\gamma \right] \bE \left[\nu_2^\gamma \right] \, |A\times B|.
\end{align*}
 With this at hand, the assertion can be proved in exactly the same way
as that for (i).
\end{proof}

In the proof of Proposition \ref{P:3.1}(i) below, we need the following maximal ergodic theorem for multiplicative additive processes with continuous parameter.

\begin{proposition}\label{p3-0}
Suppose that $0\le F\in L^1(\Omega;\Pp)$. Then, there is a constant $C>0$ such that for all
 $\lambda,  R_0>0$,
\begin{equation}\label{p3-0-1}
 \Pp\left(\left\{\omega\in\Omega: \sup_{\e\in (0,1)}\int_{[0,R_0]^d}F(\tau_{x/\e}\w)\,dx>\lambda\right\}\right)
\le C R_0^d \, \Ee[F] /\lambda .
\end{equation}
 \end{proposition}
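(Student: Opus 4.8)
The plan is to recognize the left-hand side of \eqref{p3-0-1} as a constant multiple of a Wiener-type maximal function for the measure-preserving $\R^d$-action $\{\tau_x\}_{x\in\R^d}$, and then to prove the corresponding weak $(1,1)$ maximal inequality by Calder\'on's transference principle, transferring the classical Hardy--Littlewood maximal inequality on $\R^d$ to $(\Omega,\FF,\Pp)$.

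First I would rescale the inner integral. For each $\e\in(0,1)$ the substitution $x=\e y$ gives
$$
\int_{[0,R_0]^d}F(\tau_{x/\e}\w)\,dx=\e^d\int_{[0,R_0/\e]^d}F(\tau_y\w)\,dy=\frac{R_0^d}{T^d}\int_{[0,T]^d}F(\tau_y\w)\,dy,\qquad T:=R_0/\e .
$$
As $\e$ ranges over $(0,1)$ the parameter $T$ ranges over $(R_0,\infty)$, so
$$
\sup_{\e\in(0,1)}\int_{[0,R_0]^d}F(\tau_{x/\e}\w)\,dx=R_0^d\sup_{T>R_0}\frac1{T^d}\int_{[0,T]^d}F(\tau_y\w)\,dy\le R_0^d\,\mathcal{M}F(\w),
$$
where $\mathcal{M}F(\w):=\sup_{T>0}T^{-d}\int_{[0,T]^d}F(\tau_y\w)\,dy$. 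Hence it suffices to establish the scale-free maximal inequality $\Pp(\mathcal{M}F>s)\le C_d\,\Ee[F]/s$ for all $s>0$, and then apply it with $s=\lambda/R_0^d$.

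To prove this inequality I would transfer the weak $(1,1)$ bound for the axis-parallel Hardy--Littlewood maximal operator on $\R^d$. Fix $N\ge1$ and set $E_N:=\big\{\w\in\Omega:\sup_{0<T\le N}T^{-d}\int_{[0,T]^d}F(\tau_y\w)\,dy>s\big\}$; since $E_N\uparrow\{\mathcal{M}F>s\}$ as $N\to\infty$, it is enough to bound $\Pp(E_N)$ uniformly in $N$. For fixed $\w$ and $M\ge1$ put $g_\w(y):=F(\tau_y\w)\,\I_{\{y\in[0,M+N]^d\}}$. If $z\in[0,M]^d$ and $\tau_z\w\in E_N$, then there is $T_z\le N$ with $T_z^{-d}\int_{z+[0,T_z]^d}F(\tau_y\w)\,dy>s$, and since $z+[0,T_z]^d\subset[0,M+N]^d$ is an axis-parallel cube having $z$ as a corner, the Hardy--Littlewood maximal function of $g_\w$ at $z$ exceeds $s$. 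The classical weak $(1,1)$ estimate then gives $\big|\{z\in[0,M]^d:\tau_z\w\in E_N\}\big|\le (C_d/s)\int_{[0,M+N]^d}F(\tau_y\w)\,dy$. Integrating this inequality in $\w$ over $(\Omega,\Pp)$, using Fubini together with the measurability of $(x,\w)\mapsto\tau_x\w$ and the stationarity of $\{\tau_x\}$ --- so that $\int_\Omega\I_{E_N}(\tau_z\w)\,\Pp(d\w)=\Pp(E_N)$ and $\int_\Omega F(\tau_y\w)\,\Pp(d\w)=\Ee[F]$ for all $z,y$ --- one obtains $M^d\Pp(E_N)\le (C_d/s)(M+N)^d\,\Ee[F]$. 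Letting $M\to\infty$ gives $\Pp(E_N)\le (C_d/s)\Ee[F]$, and then $N\to\infty$ yields $\Pp(\mathcal{M}F>s)\le C_d\,\Ee[F]/s$, which is the desired bound with $C=C_d$.

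The change of variables and the Fubini--stationarity bookkeeping are routine; the step that requires care is the transference. One must truncate $F(\tau_\cdot\w)$ to a box large enough to contain every averaging cube $z+[0,T_z]^d$ attached to a point $z$ of the relevant set, and then verify that the spurious volume factor $((M+N)/M)^d$ tends to $1$ after the limit in $M$. Alternatively, one may bypass transference entirely and simply invoke the continuous-parameter Wiener maximal ergodic theorem for measure-preserving $\R^d$-actions, which asserts precisely $\Pp(\mathcal{M}F>s)\le C_d\,\Ee[F]/s$; combined with the rescaling identity above, this gives \eqref{p3-0-1} at once.
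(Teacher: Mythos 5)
Your proof is correct, but it takes a genuinely different route from the paper. You reduce the estimate, via a change of variables, to a weak $(1,1)$ bound for the continuous-parameter maximal operator $\mathcal{M}F(\w)=\sup_{T>0}T^{-d}\int_{[0,T]^d}F(\tau_y\w)\,dy$, and then prove that bound by Calder\'on transference: freeze $\w$, truncate $y\mapsto F(\tau_y\w)$ to the box $[0,M+N]^d$, apply the classical Hardy--Littlewood weak $(1,1)$ inequality on $\R^d$, integrate in $\w$ using Fubini and stationarity, and send $M\to\infty$ and then $N\to\infty$. (Your alternative remark of invoking Wiener's maximal ergodic theorem for $\R^d$-actions is the same fact phrased as a citation.) The paper instead sets $R_0=1$, observes that $\{F_I(\w)\}_{I\in\mathscr{I}}$ with $F_I(\w)=\int_I F(\tau_x\w)\,dx$ is a discrete additive process on $\Z^d$ in Krengel's sense, invokes Krengel's Corollary 2.7 (p.\ 205) to get the weak $(1,1)$ bound for the dyadic maximal function $\bar F_*(\w)=\sup_{m\ge0}2^{-md}\int_{[0,2^m]^d}F(\tau_x\w)\,dx$, and then shows by an explicit comparison that the full supremum over $\e\in(0,1)$ satisfies $F_*\le 2^{d+1}\bar F_*$. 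What each buys: the paper's argument is shorter to write because the heavy lifting is outsourced to Krengel's additive-process maximal theorem, at the cost of a dyadic-to-continuous comparison step and a constant $c_0 2^{d+1}$; yours is self-contained modulo the Euclidean Hardy--Littlewood inequality, proves the cleaner scale-free statement $\Pp(\mathcal{M}F>s)\le C_d\,\Ee[F]/s$ directly, and handles general $R_0$ purely through the rescaling identity $T=R_0/\e$ rather than by normalization. Both deliver the same weak $(1,1)$ estimate, and your use of the joint measurability of $(x,\w)\mapsto\tau_x\w$ and of $\Pp$-invariance in the Fubini step is precisely what the paper's hypotheses (i) and (iii) supply.
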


Although the maximal ergodic theorem for multiplicative additive processes has been used in some literature, we can not find a suitable reference for its proof in the continuous parameter setting. For safe of the completeness, here we will provide the

\begin{proof}[Proof of Proposition $\ref{p3-0}$]
Without loss of generality,  we assume $R_0=1$
in \eqref{p3-0-1}.

Let $\mathscr{I}=\{[x,x+k2^m]^d: x\in \Z^d, m,k\in \Z_+\}$,
and define
$$
F_I(\w):=\int_{I}F(\tau_x\w)\,dx,\quad I\in \mathscr{I}.
$$
It is easy to verity $\{F_I(\w)\}_{I\in \mathscr{I}}$ satisfies (2.1)--(2.3) in \cite[Page 201]{Kre} (indeed, (2.2) in \cite[Page 201]{Kre} holds with equality), and so
$\{F_I(\w)\}_{I\in \mathscr{I}}$ is a (discrete) additive process on the integer lattice $\Z^d$.
Let $Q_m:=[0,2^m]^d$ for $m\ge0$,
and
$$
\bar F_*(\w):=\sup_{m\ge 0}\frac{1}{2^{md}}\int_{Q_m}F(\tau_x\w)\,dx.
$$
According to \cite[Page 205, Corollary 2.7]{Kre}, there is a constant $c_0>0$ so that for all $\lambda>0$,
\begin{equation}\label{p3-0-2}
\Pp\left(\bar F_*(\w)>\lambda\right)\le c_0\lambda^{-1}\Ee[F].
\end{equation}

Now, we define
$$
F_*(\w):=\sup_{\e\in (0,1)}\left[\e^d\int_{[0,\e^{-1}]^d}F(\tau_x\w)\,dx\right]=\sup_{\e\in (0,1)}\int_{[0,1]^d}F(\tau_{x/\e}\e)\,dx.
$$
Then, for any $m\ge0$ and $\e\in (2^{-(m+1)},2^{-m}]$,
\begin{align*}
 &\left|2^{-md}\int_{Q_m}F(\tau_x\w)\,dx-\e^d\int_{[0,\e^{-1}]^d}F(\tau_x\w)\,dx
\right| \\
&\le  2^{-md}\int_{Q_{m+1}\backslash Q_m}F(\tau_x\w)\,dx+ (2^{-md}-2^{-(m+1)d})
\int_{Q_{m+1}}F(\tau_x\w)\,dx\\
&\le  (2^{d+1}-1)\frac{\int_{Q_{m+1}}F(\tau_x\w)\,dx}{|Q_{m+1}|}\le (2^{d+1}-1)\bar F_*(\w).
\end{align*}
Hence, for any $m\ge0$,
$$
\sup_{\e\in (2^{-(m+1)},2^{-m}]}\left[\e^d\int_{[0,\e^{-1}]^d}F(\tau_x\w)\,dx\right]
\le 2^{-md}\int_{Q_m}F(\tau_x\w)\,dx+(2^{d+1}-1)\bar F_*(\w)\le 2^{d+1}\bar F_*(\w),
$$
which implies that
$$
F_*(\w)=\sup_{m\ge0}\sup_{\e\in (2^{-(m+1)},2^{-m}]}\left[\e^d\int_{[0,\e^{-1}]^d}F(\tau_x\w)\,dx\right]\le 2^{d+1} \bar F_*(\w).
$$
Therefore, by \eqref{p3-0-2},
we obtain
\begin{align*}
\Pp\left(\left\{\omega\in \Omega: F_*(\w)>\lambda\right\}\right)
\le \Pp\left(\left\{\omega\in \Omega: \bar F_*(\w)>2^{-(d+1)}\lambda\right\}\right)\le
c_02^{d+1} \Ee[F] /\lambda .
\end{align*}
This proves \eqref{p3-0-1}.
\end{proof}

\subsection{Weak convergence of  bilinear forms}\label{Sect3-1}
Recall that
$\Gamma \subset \R^d$ is
an infinite symmetric cone  that has non-empty interior.
When
$d=1$, $\Gamma$ is just $ \R$.

\begin{proposition}\label{P:3.1}
\begin{itemize}
\item [(i)]
Suppose that  {\bf(A1)} holds and that there is a non-negative random variables $\Lambda $
on $(\Omega, \FF, \Pp)$ so that
$\bE [ \Lambda^{p}] < \infty$ for some  $p>1$ and
\begin{equation}\label{e:3.1}
 \kappa (x, y; \w) \leq \Lambda(\tau_x \w) + \Lambda (\tau_y \w)
\quad \hbox{for every } \w \in \Omega,\, x, y \in \R^d.
\end{equation}
Then there is a subset $\Omega_1 \subset \Omega$ of full probability measure  so that for every $\w \in \Omega_1$,
any $\eta>0$, $f\in B(\R^d)$ and $g\in B_c(\R^d)$,
\begin{align*}
&\lim_{\e\to 0}\int_{\R^d}\int_{\{\eta<|z|<1/\eta,\,z\in \Gamma\}}
\frac{(f(x+z)-f(x))(g(x+z)-g(x))}{|z|^{d+\alpha}}\kappa (x/\e ,(x+z)/\e;\w)\,dz\,dx\\
&= \int_{\R^d}\int_{\{\eta<|z|<1/\eta,\,z\in \Gamma\}}\frac{(f(x+z)-f(x))(g(x+z)-g(x))}{|z|^{d+\alpha}}\,(\bar \nu(z)+\bar \nu(-z))\,dz\,dx,
\end{align*} where $\bar \nu$ is a non-negative measurable function given in assumption {\bf (A1)}.

\item [(ii)]
Let $\nu_1 $ and $\nu_2 $ be non-negative random variables on $(\Omega, \FF, \Pp)$ such that
$\bE \left[ \nu_1^{p} + \nu_2^{p} \right] < \infty$ for some $p>1$.
Then there is a subset $\Omega_2 \subset \Omega$ of full probability measure  so that for every $\w \in \Omega_2$,
any $\eta>0$,  $f\in B(\R^d)$ and $g\in B_c(\R^d)$,
\begin{align*}
&\lim_{\e\to 0}\int_{\R^d}\int_{\{\eta<|x-y|<1/\eta,\,x-y\in \Gamma\}}
\frac{(f(y)-f(x))(g(y)-g(x))}{|x-y|^{d+\alpha}}
\nu_1 (\tau_{x/\e} \w) \nu_2 (\tau_{y/\e} \w) \, \,dx\,dy\\
&= \int_{\R^d}\int_{\{\eta<|x-y|<1/\eta,\,x-y\in \Gamma\}}\frac{(f(y)-f(x))(g(y)-g(x))}{|x-y|^{d+\alpha}}\,
\bE [\nu_1] \bE [\nu_2]
\,dx\,dy.
\end{align*}
\end{itemize}
\end{proposition}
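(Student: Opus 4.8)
The plan is to treat (ii) as essentially a corollary of Lemma~\ref{L:3.2}(ii), and to work harder for (i). For (ii), note that $h(x,y):=\frac{(f(y)-f(x))(g(y)-g(x))}{|x-y|^{d+\alpha}}\I_{\{\eta<|x-y|<1/\eta,\,x-y\in\Gamma\}}$ is bounded (since $|x-y|>\eta$ on its support) and compactly supported (since $g\in B_c(\R^d)$ forces $x\in\text{supp}(g)$ or $y\in\text{supp}(g)$, while $|x-y|<1/\eta$), so $h$ lies in every $L^q(\R^d\times\R^d)$; applying Lemma~\ref{L:3.2}(ii) to $h$ gives (ii) with $\Omega_2$ the full-measure set furnished there. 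For (i), I would first reduce. Put $\Phi(x,z):=\frac{(f(x+z)-f(x))(g(x+z)-g(x))}{|z|^{d+\alpha}}$ and $R_\eta:=\{z:\eta<|z|<1/\eta,\ z\in\Gamma\}$. By {\bf(A1)}, $\kappa(x/\e,(x+z)/\e;\w)=\nu(z/\e;\tau_{x/\e}\w)+\nu(-z/\e;\tau_{(x+z)/\e}\w)$, so the integral on the left-hand side of (i) equals $J_1^\e+J_2^\e$, where $J_1^\e:=\iint_{\R^d\times R_\eta}\Phi(x,z)\,\nu(z/\e;\tau_{x/\e}\w)\,dz\,dx$ and $J_2^\e$ is the analogous integral of the second term. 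The change of variables $(x,z)\mapsto(x+z,-z)$ has Jacobian $1$ and preserves $R_\eta$ (since $\Gamma$ is symmetric), turning $J_2^\e$ into $J_1^\e$; the analogous changes of variables show $\iint_{\R^d\times R_\eta}\Phi(x,z)\,\bar\nu(-z)\,dz\,dx=\iint_{\R^d\times R_\eta}\Phi(x,z)\,\bar\nu(z)\,dz\,dx$. Hence (i) reduces to proving that, for a.e.\ $\w$, $J_1^\e\to\iint_{\R^d\times R_\eta}\Phi(x,z)\,\bar\nu(z)\,dz\,dx$ as $\e\to0$. I record that $\Phi$ is bounded on $\R^d\times R_\eta$ and $x\mapsto\int_{R_\eta}|\Phi(x,z)|\,dz$ is supported in a fixed bounded set $B_\eta$, and that $\nu(z;\w)\le\kappa(0,z;\w)\le\Lambda(\w)+\Lambda(\tau_z\w)$ by {\bf(A1)} and \eqref{e:3.1}, so $\bE[\nu(z/\e;\cdot)]\le 2\bE[\Lambda]$ and $\bE[\nu(z/\e;\cdot)^p]\le 2^p\bE[\Lambda^p]$ uniformly in $z,\e$.

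For the convergence of $J_1^\e$ I would separate mean and fluctuation. By stationarity $\bE[J_1^\e]=\iint_{\R^d\times R_\eta}\Phi(x,z)\,\bE[\nu(z/\e;\cdot)]\,dz\,dx$, and since for each fixed $x$ the map $z\mapsto\Phi(x,z)\I_{R_\eta}(z)$ is bounded with compact support, {\bf(A1)}(ii) and dominated convergence (bounded $x$-support, $\bE[\nu(z/\e;\cdot)]\le2\bE[\Lambda]$) give $\bE[J_1^\e]\to\iint_{\R^d\times R_\eta}\Phi(x,z)\,\bar\nu(z)\,dz\,dx$. For the fluctuation, truncate: with $\nu_n:=\nu\wedge n$, let $J_{1,n}^\e:=\iint_{\R^d\times R_\eta}\Phi(x,z)\,\nu_n(z/\e;\tau_{x/\e}\w)\,dz\,dx$ and $R_n^\e:=J_1^\e-J_{1,n}^\e$. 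Expanding $\var(J_{1,n}^\e)$ as a fourfold integral, using stationarity and the mixing bound {\bf(A1)}(i) for $\nu_n$ (namely $|\mathrm{Cov}(\nu_n(z/\e;\cdot),\nu_n(z'/\e;\tau_{(x'-x)/\e}(\cdot)))|\le C_1(n)(1\wedge|(x-x')/\e|^{-l})$), and integrating out $z,z'$ against the bounded, compactly supported $\Phi$, I obtain $\var(J_{1,n}^\e)\le C_1(n)\,C(\eta,f,g)\int_{B_\eta}\int_{B_\eta}(1\wedge|(x-x')/\e|^{-l})\,dx\,dx'\le C(n,\eta,f,g)\,\e^{(l\wedge d)/2}$. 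Along $\e_j:=2^{-j}$ this is summable, so Chebyshev and Borel--Cantelli give $J_{1,n}^{\e_j}-\bE[J_{1,n}^{\e_j}]\to0$ a.s.; since $\bE|R_n^\e|\le C(\eta,f,g)\sup_z\bE[(\nu(z/\e;\cdot)-n)^+]\le C(\eta,f,g)\,\bE[\Lambda^p]\,n^{-(p-1)}$, this yields $\limsup_j|J_{1,n}^{\e_j}-\iint_{\R^d\times R_\eta}\Phi\,\bar\nu(z)\,dz\,dx|\le C(\eta,f,g)\,\bE[\Lambda^p]\,n^{-(p-1)}$ a.s.

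The crux — and the step I expect to be the main obstacle — is the a.s.\ control of the tail $R_n^\e$ \emph{uniformly in $\e$} with only $p>1$ moments, where the naive second-moment estimate is unavailable for $p<2$. I would bound $|R_n^\e|\le\|\Phi\|_{L^\infty(\R^d\times R_\eta)}\,\widetilde R_n^\e$ with $\widetilde R_n^\e:=\int_{B_\eta}\int_{R_\eta}(\nu(z/\e;\tau_{x/\e}\w)-n)^+\,dz\,dx$, and then, using $(\nu(z/\e;\tau_{x/\e}\w)-n)^+\le(\Lambda(\tau_{x/\e}\w)+\Lambda(\tau_{(x+z)/\e}\w))\I_{\{\Lambda(\tau_{x/\e}\w)+\Lambda(\tau_{(x+z)/\e}\w)>n\}}$, the elementary inequality $(a+b)\I_{\{a+b>n\}}\le 2a\I_{\{a>n/2\}}+2b\I_{\{b>n/2\}}+n\I_{\{a>n/2\}}+n\I_{\{b>n/2\}}$, and the substitution $u=x+z$, I would dominate $\widetilde R_n^\e$ by a finite sum of terms $c(\eta)\int_Q G(\tau_{x/\e}\w)\,dx$ and $c(\eta)\,n\int_Q H(\tau_{x/\e}\w)\,dx$ for a fixed cube $Q$, with $G=\Lambda\I_{\{\Lambda>n/2\}}$, $H=\I_{\{\Lambda>n/2\}}$ and $\bE[G]+n\bE[H]\le C\,\bE[\Lambda^p]\,n^{-(p-1)}$. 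The maximal ergodic theorem, Proposition~\ref{p3-0}, applied to $G$ and to $H$, then gives $\Pp\big(\sup_{\e\in(0,1)}\widetilde R_n^\e>\lambda\big)\le C(\eta,f,g)\,\bE[\Lambda^p]\,n^{-(p-1)}/\lambda$ for every $\lambda>0$. Taking $n=2^k$ and $\lambda=2^{-k(p-1)/2}$ makes the right-hand side summable in $k$ — this is exactly where $p>1$ is used — so by Borel--Cantelli $\sup_{\e\in(0,1)}\widetilde R_{2^k}^\e\to0$ a.s.; and as $\widetilde R_n^\e$ is nonincreasing in $n$, $\sup_{\e\in(0,1)}\widetilde R_n^\e\to0$ a.s.\ as $n\to\infty$, whence $\limsup_{\e\to0}|R_n^\e|\to0$ a.s.

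Combining the previous two paragraphs and letting $n\to\infty$ gives $J_1^{\e_j}\to\iint_{\R^d\times R_\eta}\Phi(x,z)\,\bar\nu(z)\,dz\,dx$ a.s.\ along the dyadic sequence. To upgrade to $\e\to0$ continuously, with a single exceptional null set valid for all admissible $f\in B(\R^d)$, $g\in B_c(\R^d)$, I would again use Proposition~\ref{p3-0}, now with $F=\Lambda$ and $F=\Lambda^p$, to obtain a.s.\ the bounds $\sup_{\e\in(0,1)}\int_{B_\eta}\int_{R_\eta}\nu(z/\e;\tau_{x/\e}\w)\,dz\,dx<\infty$ and its $L^p$ analogue; these give uniform-in-$\e$ equicontinuity of $(f,g)\mapsto J_1^\e$ in an $L^q$-norm over $B_\eta$ (so it suffices to handle a countable dense family of test functions) and exhibit $\e\mapsto J_1^\e$ as a uniform-in-$\e$ limit of the maps attached to \emph{continuous} $f',g'$, for which $\e\mapsto J_1^\e$ is continuous by dominated convergence; the dyadic a.s.\ limit then propagates to all $\e\to0$. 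Thus the argument for (i) rests on three ingredients — {\bf(A1)}(ii) for the annealed limit, the mixing {\bf(A1)}(i) plus Borel--Cantelli for the bounded part, and the maximal ergodic theorem for the unbounded tail — whereas (ii) is immediate from Lemma~\ref{L:3.2}(ii), the product form making any mixing hypothesis unnecessary.
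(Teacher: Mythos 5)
Your part (ii) coincides with the paper's and is correct: it is a direct application of Lemma~\ref{L:3.2}(ii). For part (i), your overall strategy mirrors the paper's --- write $\kappa$ via {\bf(A1)} and reduce to $J_1^\e$, truncate $\nu$ at level $n$, control the bounded part through a variance estimate based on the mixing condition {\bf(A1)}(i) and Borel--Cantelli, and control the unbounded tail uniformly in $\e$ through the maximal ergodic theorem (Proposition~\ref{p3-0}) and another Borel--Cantelli. Your variance computation (the fourfold covariance integral, stationarity, and $C_1(n)\iint_{B_\eta\times B_\eta}(1\wedge|(x-x')/\e|^{-l})\,dx\,dx'=O(\e^{l\wedge d})$ up to a logarithm, hence $O(\e^{(l\wedge d)/2})$) is a clean substitute for the paper's cube-tiling estimate of ${\rm Var}(L_1^m)$, and your tail bound via $(\nu-n)^+$ dominated by $\Lambda$-shifts plus Proposition~\ref{p3-0} matches the paper's in substance.

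The genuine gap is the passage from the dyadic sequence $\e_j=2^{-j}$ to the full limit $\e\to0$. You argue that, for continuous $f',g'$, the map $\e\mapsto J_1^\e$ is continuous (true, by dominated convergence after the substitution $(u,v)=(x/\e,z/\e)$) and conclude that ``the dyadic a.s.\ limit then propagates to all $\e\to 0$.'' That inference is false: a continuous function on $(0,1)$ can converge along one geometric subsequence while having no limit at $0$ --- for instance $h(\e)=\sin(2\pi\log_2(1/\e))$ vanishes at every $\e_j=2^{-j}$. Neither continuity nor your $\sup_\e$ bounds from Proposition~\ref{p3-0} (which control the \emph{tail} uniformly in $\e$, not the oscillation of the truncated part) give any quantitative control of $J_{1,n}^\e-J_{1,n}^{\e_j}$ for $\e\in(\e_{j+1},\e_j]$. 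The paper avoids this by running the Borel--Cantelli argument for every ratio $\delta=1+1/n$: for a given $\e$ it chooses $m$ with $\delta^{m-1}\le1/\e<\delta^m$, compares the rescaled integral over $[0,1/\e]^{2d}$ to the one over $[0,\delta^{m-1}]^{2d}$, and pays an explicit, deterministic boundary error $I_2^m+I_3^m$ of order $c(k)(\delta^{2d}-1)$ --- valid for every $\e$ in the interval, and relying crucially on the truncation bound $\nu_k\le k$ --- which vanishes as $\delta\downarrow 1$. To repair your proof you should replace $2^{-j}$ by the family $(1+1/n)^{-m}$ and supply a deterministic bound, exploiting $\nu_k\le k$, on the oscillation of $J_{1,k}^\e$ over each geometric annulus of ratio $1+1/n$, vanishing as $n\to\infty$; the union over $n,k$ of exceptional null sets remains null.
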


\begin{proof}
 We first prove the assertion (ii), and then  (i).

(ii)
Note that for every
$\eta>0$, $f\in B(\R^d)$ and $g\in B_c(\R^d)$,
$$
F(x, y):= \I_{\{\eta<|x-y|<1/\eta,\,x-y\in \Gamma\}} \frac{(f(x)-f(y))(g(x)-g(y))}{|x-y|^{d+\alpha}}
$$
is a bounded and compactly supported function on $\R^d\times \R^d$. Since
\begin{align*}
&\int_{\R^d}\int_{\{\eta<|x-y|<1/\eta,\,x-y\in \Gamma\}}
\frac{(f(x)-f(y))(g(x)-g(y))}{|x-y|^{d+\alpha}}
\nu_1(x/\e ;\w)\nu_2(y/\e;\w)\,dx\,dy\\
&=2 \iint_{\R^d\times \R^d} F(x, y) \nu_1(\tau_{x/\e }\w)
\nu_2(\tau_{y/\e}\w)\,dx\,dy.
\end{align*}
 Proposition \ref{P:3.1}(ii) follows directly from  Lemma \ref{L:3.2}(ii).

(i) Let $F(x,y)$ be  the bounded and compactly supported function defined above. Note that
\begin{align*}
&\int_{\R^d}\int_{\{\eta<|z|<1/\eta,\,z\in \Gamma\}}
\frac{(f(x+z)-f(x))(g(x+z)-g(x))}{|z|^{d+\alpha}}
\nu\left(z/\e;\tau_{x/\e}\w\right)\,dz\,dx\\
&=  \iint_{\R^d\times \R^d} F(x,x+z) \nu\left(z/\e;\tau_{x/\e}\w\right)\,dz\,dx,
\end{align*}
and by Proposition \ref{P:2.1} and condition \eqref{e:3.1}, for $\bP$-a.s. $w\in \Omega$,
 \begin{align*}
\limsup_{\e \to 0}\iint_{A\times B}  \nu\left(z/\e;\tau_{x/\e}\w\right)^p\,dz\,dx
&\leq  2^p \limsup_{\e \to 0}\iint_{A\times B} \left(\Lambda (\tau_{x/\e} \w)^p +
\Lambda (\tau_{(x+z)/\e} \w)^p \right)  \,dz \,dx \\
& =  2^{p} (|A\times B| +|(A+B) \times B|)\, \bE \left[ \Lambda^p \right]
<\infty,
\end{align*} where $A+B=\{x+y\in \R^d: x\in A, y \in B\}$.
Note also that  by {\bf(A1)} and  \eqref{e:3.1},
$$
\sup_{x\in \R^d} \bE [ \nu (x; \cdot)] \leq \sup_{x\in \R^d} \bE [  \kappa (x, 0; \cdot) ]\leq\bE \left[ \Lambda\right]+
\sup_{x\in \R^d}\bE\left[\Lambda (\tau_x(\cdot)) \right] = 2\bE  [\Lambda ]
\leq 2 (\bE  [\Lambda^p ])^{1/p} <\infty
$$
and so, by \eqref{a2-1-2a}, the function $\bar \nu$ in {\bf(A1)} is bounded.
Proposition \ref{P:3.1}(i) can be proved in exactly the same way as that of Lemma \ref{L:3.2}(i)
once one can verify that there is a subset $\Omega_2\subset \Omega$ of full probability measure
so that for every $\w \in \Omega_2$ and any $A, B\in \mathscr{B}_\mathbb{Q} (\R^d)$,
\begin{equation}\label{e:step1}
\lim_{\e \to 0}\iint_{A\times B}  \nu(z/\e;\tau_{x/\e }\w)\,dz\,dx=
\iint_{A\times B} \bar \nu(z)\,dz\,dx.
\end{equation} Furthermore, by \eqref{a2-1-2a}, in order to verify \eqref{e:step1} it suffices to prove that there is a subset $\Omega_3\subset \Omega_2$ of full probability measure
so that for  every $\w \in \Omega_3$ and any $A, B\in \mathscr{B}_\mathbb{Q} (\R^d)$,
\begin{equation}\label{e:step1-}
\lim_{\e \to 0}
\Big|\iint_{A\times B}(\nu(z/\e;\tau_{x/\e }\w)-\Ee[ \nu(z/\e;\cdot)])
\,dz\,dx\Big|=0.
\end{equation}
For notational convenience,  we  will prove the above for
  $A=B=[0,1]^d$. The proof for the general case is similar.

Recall that $\nu_k(z;\w):=\nu(z;\w)\wedge k$.
Fix $\delta>1$. For any $\e>0$,   choose $m\ge 1$ such that
$\delta^{m-1}\leq 1/\e < \delta^m$.
Then,
\begin{align*}
&\Big|\iint_{[0,1]^d\times [0,1]^d}\big(\nu_k(z/\e;\tau_{x/\e }\w)-\Ee [\nu_k(z/\e;\cdot)]\big)\,
dz\,dx\Big|\\
&=\Big|\e^{2d}\iint_{[0,\e^{-1}]^d\times [0,\e^{-1}]^d}
\big(\nu_k(z;\tau_{x}\w)-\Ee [\nu_k(z;\cdot)]\big)\,
dz\,dx\Big|\\
&\le  \delta^{-2(m-1)d}\Big|\iint_{[0,\delta^{m-1}]^d\times [0,\delta^{m-1}]^d}
\big(\nu_k(z;\tau_{x}\w)-\Ee [\nu_k(z;\cdot)] \big)\,
dz\,dx\Big|\\
& \quad+ \delta^{-2(m-1)d} \Big|\iint_{ [0,\e^{-1}]^d\times [0,\e^{-1}]^d \setminus [0,\delta^{m-1}]^d\times [0,\delta^{m-1}]^d}
\big(\nu_k(z;\tau_{x}\w)-\Ee [\nu_k(z;\cdot)] \big)\,
dz\,dx\Big|\\
&  \le  \Big| \delta^{-2(m-1)d} \iint_{[0,\delta^{m-1}]^d\times [0,\delta^{m-1}]^d}
\big(\nu_k(z;\tau_{x}\w)-\Ee [\nu_k(z;\cdot)] \big)\,
dz\,dx\Big|\\
 &\quad +\delta^{-2(m-1)d}\iint_{[0,\delta^m]^d\times
[0,\delta^m]^d\backslash[0,\delta^{m-1}]^d\times
[0,\delta^{m-1}]^d}\nu_k(z;\tau_x \w)\,dz\,dx\\
& \quad + \delta^{-2(m-1)d}\iint_{[0,\delta^m]^d\times
[0,\delta^m]^d\backslash[0,\delta^{m-1}]^d\times
[0,\delta^{m-1}]^d}\Ee [\nu_k(z;\cdot)] \,dz\,dx\\
&=: \sum_{i=1}^3 I_i^m.
\end{align*}

We consider $I_1^m$ first. For any $m\in \Z_+$ and  $x=(x^{(1)},x^{(2)},\cdots,x^{(d)})\in
\delta^{-m}\Z^d$,    set
$$Q_x^m=\prod_{1\le i\le d}
[x^{(i)},x^{(i)}+\delta^{-m}].
$$
Define
\begin{align*}
\mathscr{S}_m =& \{x\in \delta^{-m}\Z^d: Q_x^m\subset [0,1]^d\}, \\
\mathscr{T}_m =& \{x\in \delta^{-m}\Z^d: Q_x^m\cap [0,1]^d\neq \emptyset,
Q_x^m\cap([0,1]^d)^c\neq \emptyset\}, \\
 F_m(\w) =& \int_{[0,1]^d}\nu_k (\delta^m z;\w )\,dz.
 \end{align*}
 We have
\begin{equation}\label{l2-2-4}
\begin{split}
 &\delta^{-2md}\iint_{[0,\delta^m]^d\times [0,\delta^m]^d} \nu_k(z;\tau_{x}\w)\,dz\,dx\\
&=\iint_{[0,1]^d\times [0,1]^d}
\nu_k (\delta^m z;\tau_{\delta^m x}\w )\,dz\,dx=\int_{[0,1]^d}F_m (\tau_{\delta^m x}\w )\,dx\\
&=\sum_{i: x_i\in \mathscr{S}_m }\int_{Q_{x_i}^m}F_m (\tau_{\delta^m x}\w )\,dx
+\sum_{j: x_j\in \mathscr{T}_m}\int_{Q_{x_j}^m\cap [0,1]^d}F_m (\tau_{\delta^m x}\w )\,dx\\
&=:L_1^m+L_2^m.
\end{split}
\end{equation}
For  $x\in \delta^{-m}\Z^d$, let
$$
 F_{x,m}(\w)=\int_{Q_x^m}F_m (\tau_{\delta^m z} \w )\,dz .
 $$
 Since $0\le \nu_k(z;\cdot)\le k$ for all $z\in \R^d$,
 there is a constant $c_1(k)>0$ (which may depend on $k$) such that
\begin{equation}\label{l2-2-5}
\sup_{x\in \delta^{-m}\Z^d}{\rm Var} (F_{x,m} ) \leq \sup_{x\in \delta^{-m}\Z^d}\Ee \left[ F_{x,m}^2 \right]
 \le c_1(k)\delta^{-2md}.
\end{equation}
Hence,
 \begin{align*}
 {\rm Var} (L_1^m) &={\rm Var} \Big(  \sum_{i: x_i\in \mathscr{S}_m}   F_{x_i,m}\Big)=\sum_{n=0}^{[{\delta^m}]}
 \sum_{i,j:\, x_i,x_j \in
\mathscr{S}_m,
d
n\delta^{-m}\le |x_i-x_j|\le d
(n+1)\delta^{-m}}
{\rm Cov } ( F_{x_i,m},  F_{x_j,m} ) \\
&=\sum_{i,j:\,  x_i,x_j \in \mathscr{S}_m,|x_i-x_j|\le d\delta^{-m}}
 {\rm Cov } (  F_{x_i,m},  F_{x_j,m} )  \\
&\quad +\sum_{n=1}^{[\delta^m]}
\sum_{i,j: \, x_i,x_j \in
\mathscr{S}_m,  dn\delta^{-m}< |x_i-x_j|\le d(n+1)\delta^{-m}}  {\rm Cov } ( F_{x_i,m},   F_{x_j,m})  \\
&=:J_1^m+J_2^m.
\end{align*}
Note that $|\{(i,j):x_i,x_j\in \mathscr{S}_m, |x_i-x_j|\le d\delta^{-m}\}|\le c_2\delta^{md}$ for some constant $c_2>0$. Then, by \eqref{l2-2-5} and the
Cauchy-Schwarz inequality,
$$
|J_1^m|\le c_3(k)\delta^{-md}.
$$
On the other hand, it follows from \eqref{a2-1-1} that when $dn\delta^{-m}<|x_i-x_j|\le d(n+1)\delta^{-m}$
for some $n\ge 1$,
\begin{align*}
  {\rm Cov } (  F_{x_i,m},   F_{x_j,m} ) &=
\int_{Q_{x_i}^m}\int_{Q_{x_j}^m}   {\rm Cov } \left( F_m  (\tau_{\delta^m x}(\cdot)),
  F_m(\tau_{\delta^m y}(\cdot) ) \right) \,dy\,dx\\
&=\int_{Q_{x_i}^m}\int_{Q_{x_j}^m}    {\rm Cov } \left(
F_m (\tau_{\delta^m x}(\cdot)) ,    F_m (\tau_{\delta^m (y-x)}\circ \tau_{\delta^m x}(\cdot ) )\right)\,dy\,dx\\
&\le c_4(k)
\int_{Q_{x_i}^m}\int_{Q_{x_j}^m} \left|\delta^m
(y-x)\right|^{-l}\,dy\,dx  \le c_5(k) n^{-l}\delta^{-2md},
\end{align*}
where  the first inequality
follows  from \eqref{a2-1-1}, and the second one is due to  \eqref{l2-2-5} and
the fact that $|x-y|\ge c_6n\delta^{-m}$ for all
$x\in Q_{x_i}^m$ and $y\in Q_{x_j}^m$ with $|x_i-x_j|>
dn\delta^{-m}$.
Since
$$|\{(i,j): x_i,x_j\in \mathscr{S}_m, dn\delta^{-m} <
|x_i-x_j|\le d(n+1)\delta^{-m}\}|\le c_{7}\delta^{md}n^{d-1}$$ for
some constant $c_{7}>0$ independent of $m$ and $n$, we get
$$
|J_2^m|\le
c_{8}(k)\delta^{-md}\sum_{n=1}^{[\delta^m]}n^{-(l+1-d)}\le
c_{9}(k)m\delta^{-m(d\wedge l)}\log \delta.
$$
Therefore, according to these two estimates
for $J_1^m$ and $J_2^m$,
we obtain
$$
 {\rm Var} (L_1^m)
 \le c_{10}(k)m\delta^{-m(d\wedge l)}\log \delta.
$$
In particular, by the Markov inequality,
for any
$\gamma>0$,
$$
\sum_{m=1}^\infty\Pp\Big(\Big|\sum_{i: x_i\in \mathscr{S}_m}
F_{x_i,m}-\sum_{i: x_i\in \mathscr{S}_m}\Ee [F_{x_i,m}] \Big|^2>\gamma\Big) <\infty.$$ Hence, by
the Borel-Cantelli lemma, for a.s.\ $\w\in \Omega$,
\begin{align*}
\lim_{m \to \infty} L_1^m
&=\lim_{m \to \infty}\sum_{i: x_i\in \mathscr{S}_m}
 F_{x_i,m}=\lim_{m \to \infty}\sum_{i: x_i\in \mathscr{S}_m} \Ee[ F_{x_i,m}]
 =\lim_{m \to \infty}\int_{\cup_{i: x_i\in \mathscr{S}_m}Q_{x_i}^m}\int_{[0,1]^d}\Ee [\nu_k(\delta^m z;\cdot)]\,dz\,dx
 \\
&=\lim_{m \to \infty}\int_{[0,1]^d\times [0,1]^d}\Ee [\nu_k(\delta^m z;\cdot)]\,dz\,dx
 =\lim_{m\to \infty} \delta^{-2md}\int_{[0,\delta^m]^d\times [0,\delta^m]^d}\Ee [\nu_k(z;\cdot)]\,dz\,dx,
\end{align*}
where the fourth equality we have used the fact $\lim_{m \to
\infty}\cup_{i: x_i\in \mathscr{S}_m}Q_{x_i}^m=[0,1]^d$.
Note that
$$|\{x_i\in \delta^{-m}\Z^d: Q_{x_i}^m\cap [0,1]^d\neq \emptyset, Q_{x_i}^m \cap  ([0,1]^d)^c\neq \emptyset\}|\le
c_{11}\delta^{m(d-1)}.$$
This along with \eqref{l2-2-5} and the
Cauchy-Schwarz
inequality gives us
\begin{align*}
\Ee[|L_2^m|^2]&\le |\{x_i\in \delta^{-m}\Z^d:
Q_{x_i}^m\cap [0,1]^d\neq \emptyset, Q_{x_i}^m \cap ([0,1]^d)^c \neq \emptyset\}|^2
\sup_{x_i\in \delta^{-m}\Z^d}\Ee[F_{x_i,m}^2]\le c_{12}(k)\delta^{-2m},
\end{align*}
which in turn implies that
for any $\gamma>0$,
$$
\sum_{m=1}^\infty\Pp\big(|L_2^{m}|>\gamma\big)<\infty.
$$
Hence  by the Borel-Cantelli lemma,     $\lim_{m \to \infty}L_2^{m}=0$ a.s.

Combining  both estimates for $L_1^m$ and $L_2^m$ with \eqref{l2-2-4} yields that
$$\lim_{m\to\infty} \delta^{-2md}\iint_{[0,\delta^m]^d\times [0,\delta^m]^d} \nu_k(z;\tau_{x}\w)\,dz\,dx = \lim_{m\to \infty} \delta^{-2md}\int_{[0,\delta^m]^d\times [0,\delta^m]^d}\Ee [\nu_k(z;\cdot)]\,dz\,dx;$$ that is,
\begin{align*}
\lim_{m \to \infty}I_1^m=&
\lim_{m \to \infty} \delta^{-2(m-1)d} \left| \iint_{[0,\delta^{m-1}]^d\times [0,\delta^{m-1}]^d}
\left(\nu_k(z;\tau_{x}\w)-\Ee [\nu_k(z;\cdot)]\right)\,dz\,dx\right|=0.
\end{align*}

By
 the argument for $I_1^m$
(in particular, by applying the Borel-Cantelli lemma),
we have
that for a.s.\ $\w\in \Omega$ and every $k\ge1$, there exists a constant $m_0(k,\w)>0$ such that
for every $m\ge m_0(k,\w)$,
\begin{align*}
 \iint_{[0,\delta^m]^d\times [0,\delta^m]^d \backslash [0,\delta^{m-1}]^d\times [0,\delta^{m-1}]^d}\nu_k(z;\tau_x\w)\,dz\,dx
&\le
\iint_{[0,\delta^m]^d\times [0,\delta^m]^d \backslash [0,\delta^{m-1}]^d\times [0,\delta^{m-1}]^d}(\Ee [\nu_k(z;\cdot)] +1)\,dz\,dx\\
&\le c_{13}(k)(\delta^{2md}-\delta^{2(m-1)d})=c_{14}(k)\delta^{2md}(1-\delta^{-2d}).
\end{align*}
In particular, for a.s.\ $\w\in \Omega$,
$$ \limsup_{m \rightarrow \infty}I_2^m \le c_{14}(k)(\delta^{2d}-1).$$
Since
$0\le \nu_k(z;\w)\le k$,
it is obvious that
$$
 \limsup_{m \rightarrow \infty} I_3^m  \le c_{15}(k)(\delta^{2d}-1).
$$

Putting all the above estimates together, we  conclude
that for every fixed $\delta>1$ and $k\ge1$, there exists a
$\Pp$-null set $\mathcal{N}_{\delta,k}$ such that for every $\w\in \Omega \setminus \mathcal{N}_{\delta,k}$,
\begin{equation}\label{l2-2-6}
\begin{split}
&\limsup_{\e \to 0}
\Big|\iint_{[0,1]^d\times [0,1]^d} (\nu_k(z/\e;\tau_{x/\e }\w )-\Ee  [\nu_k (z/\e;\cdot)])
\,dz\,dx\Big|\le \limsup_{m \to \infty}
\sum_{i=1}^3 I_i^m \le c_{16}(k)(\delta^{2d}-1).
\end{split}
\end{equation}

Let  $\mathcal{N}:=\cup_{n=1}^\infty \cup_{k=1}^\infty\mathcal{N}_{1+{1}/{n},k}$ and  $\Omega_2:=\Omega \setminus \mathcal{N}$.
Clearly $\Pp(\Omega_2)=1$,
and \eqref{l2-2-6} holds for every $\delta=1+1/n$, $k\ge1$ and $\w\in \Omega_2$.
Letting $n\to \infty$,  we have
for every $\w \in \Omega_2$,
\begin{equation}\label{l2-2-7}
\lim_{\e \to 0}
\Big|\iint_{[0,1]^d\times [0,1]^d}(\nu_k(z/\e;\tau_{x/\e }\w)-\Ee[ \nu_k(z/\e;\cdot)])
\,dz\,dx\Big|=0\quad\hbox{ for every } k\in \Z_+.
\end{equation}

For $k\ge1$, let
\begin{align*}
A_k:=\left\{\w\in \Omega:\sup_{\e\in (0,1)}\left|\iint_{[0,1]^d\times [0,1]^d}\nu_{2^k}(z/\e;\tau_{x/\e }\w)\,dz\,dx-
\iint_{[0,1]^d\times [0,1]^d}\nu(z/\e;\tau_{x/\e }\w)\,dz\,dx\right|>1/k\right\}.
\end{align*}According to \eqref{assu-1} and \eqref{e:3.1},
\begin{align*}
\Pp\left(A_k\right) &\le \Pp\left(\left\{\omega\in \Omega:
\sup_{\e\in (0,1)}\iint_{[0,1]^d\times [0,1]^d}\nu(z/\e;\tau_{x/\e }\w)
\I_{\{\nu(z/\e;\tau_{x/\e }\w)>2^k\}}\,dz\,dx>1/ k\right\}\right)\\
&\le \Pp\left(\left\{\omega \in \Omega: \sup_{\e\in (0,1)}\iint_{[0,1]^d\times [0,1]^d}\frac{\nu(z/\e;\tau_{x/\e }\w)^p}{2^{k(p-1)}}\,dz\,dx>1/k\right\}\right)\\
&\le \Pp\left(\left\{\omega \in \Omega:\sup_{\e\in (0,1)}\iint_{[0,1]^d\times [0,1]^d} \left(\Lambda(\tau_{x/\e}\w)+\Lambda(\tau_{(x+z)/\e}\w)\right)^p
\,dz\,dx>k^{-1}2^{k(p-1)} \right\}\right)\\
&\le \Pp\left(\left\{\omega\in \Omega: \sup_{\e\in (0,1)}
 \int_{[0,2]^d}
\Lambda(\tau_{x/\e }\w)^p \,dx>2^{-p} k^{-1}{2^{k(p-1)}} \right\}\right).
\end{align*} Here $p>1$ is given in the assumption of Proposition \ref{P:3.1}(i),
and in the last inequality we used the facts that $(a+b)^p\le 2^{p-1}(a^p+b^p)$ for $a,b\ge0$ and that
$$
 \iint_{[0,1]^d\times [0,1]^d}\Lambda(\tau_{(x+z)/\e }\w)^p\,dz\,dx \le
   \int_{[0,2]^d}
 \Lambda(\tau_{x/\e}\w)^p\,dx .
$$
Hence by Proposition \ref{p3-0} ,
$$
\Pp\left(\left\{\omega\in \Omega: \sup_{\e\in (0,1)}
  \int_{[0,2]^d}
\Lambda(\tau_{x/\e }\w)^p \,dx>2^{-p}k^{-1}{2^{k(p-1)}}\right\}\right)
\le c_{17}k2^{-k(p-1)}\Ee[\Lambda^p],
$$
where $c_{17}>0$ is independent of $k$. Putting all the estimates above together, we get $\sum_{k=1}^\infty \Pp(A_k)<\infty$. By the Borel-Cantelli lemma again, we can find
a subset $\Omega_3\subset \Omega_2$ with full probability measure such that for every $\w\in \Omega_3$, there exists
 $k_0(\w)>0$ such that
\begin{align*}
\left|\iint_{[0,1]^d\times [0,1]^d}\nu_{2^k}(z/\e;\tau_{x/\e }\w)\,dz\,dx-
\iint_{[0,1]^d\times [0,1]^d}\nu(z/\e;\tau_{x/\e }\w)\,dz\,dx\right|\le 1/k
\end{align*} for every  $k\ge k_0(\w)$ and $\e\in (0,1)$.

Combining this with \eqref{l2-2-7} yields (first letting $\e \to 0$ and then $k\to \infty$) that
for every $\w\in \Omega_3$,
\begin{align*}
\lim_{\e \to 0}\left|\iint_{[0,1]^d\times [0,1]^d}\nu(z/\e;\tau_{x/\e }\w)\,dz\,dx-
\iint_{[0,1]^d\times [0,1]^d}\Ee  [\nu (z/\e;\cdot)]\,dz\,dx\right|=0.
\end{align*}
Thus \eqref{e:step1-} holds with $A=B=[0, 1]$  for every $\w\in \Omega_3$.
This completes the   the proof of  assertion (i) in Proposition \ref{P:3.1}.
\end{proof}

\subsection{Pre-compactness  in $L^1$-spaces}

In this part, we give the compactness for a sequence of uniformly bounded functions
whose associated scaled Dirichlet forms are also uniformly bounded. The following is the
main result of this subsection.

\begin{proposition}\label{P:compact} Suppose that either {\bf(A2')} or
 {\bf(B2')}  holds. Then there is a subset $\Omega_0\subset \Omega$ of full probability measure  so that, for  every $\w \in \Omega_0$ and
   any   collection
of functions $\{f_\e:\e\in (0,1]\}$  with  $f_{\e}\in
\FF^{\e,\w}$ for any $\e\in (0,1]$ having
 $$
 \limsup_{\e\to0}  \left(\|f_\e\|_\infty+ \EE^{\e,\w}(f_\e,f_\e)\right)<\infty,
$$
$\{f_\e: \e\in (0,1]\}$ is  pre-compact as $\e \to 0$ in $L^1(B(0,r);dx)$ for all $r>1$.
\end{proposition}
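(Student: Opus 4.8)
The plan is to verify the Fréchet--Kolmogorov compactness criterion. Since $\limsup_{\e\to0}\|f_\e\|_\infty<\infty$, the family $\{f_\e\}$ is bounded in every $L^1(B(0,R);dx)$, so it suffices to produce a single set $\Omega_0\subset\Omega$ of full probability measure, \emph{not} depending on the collection $\{f_\e\}$, on which
\[
\lim_{\rho\to0}\ \limsup_{\e\to0}\ \sup_{|u|\le\rho}\int_{B(0,R)}|f_\e(x+u)-f_\e(x)|\,dx=0\qquad\text{for every }R>1;
\]
a diagonal extraction over $R=1,2,\dots$ then gives the stated precompactness in $L^1(B(0,r);dx)$ in the sense of Assumption {\bf (H)}(i). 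I would take $\Omega_0$ to be the set on which the Birkhoff ergodic theorem (Proposition \ref{P:2.1}) applies to $\Lambda_1^{-1}$ -- and, in case {\bf(A2')}, also to $\Lambda_1^{-1/2}$, which lies in $L^1(\Omega;\Pp)$ by Jensen since $\Ee[\Lambda_1^{-1}]<\infty$ -- and on which $\Lambda_1(\tau_{x/\e}\w)>0$ for a.e.\ $x$ (legitimate because $\Pp(\Lambda_1=0)=0$).

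The core is a \textbf{smearing estimate}. Call a ball $W=B(\zeta,r)$ $r$-\emph{admissible} if $W\subset\Gamma$ and $\sup_{w\in W}|w|\le C_0r$ for a fixed constant $C_0$. I claim that on $\Omega_0$, for every $R>1$, $K>0$, $r\in(0,1)$, every $\e\in(0,1]$, every $r$-admissible $W$, and every $f\in\FF^{\e,\w}$ with $\|f\|_\infty\le K$ and $\EE^{\e,\w}(f,f)\le K$,
\[
\int_{B(0,R)}\Big|\,\tfrac1{|W|}{\textstyle\int_W} f(x+w)\,dw-f(x)\Big|\,dx\ \le\ C\,\mathfrak M_\e(R)\,r^{\alpha/3},
\]
with $C=C(K,R,C_0,d,\alpha)$ and $\mathfrak M_\e(R)$ independent of $f$, $W$ and satisfying $\limsup_{\e\to0}\mathfrak M_\e(R)<\infty$. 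To prove it I would first rewrite $\EE^{\e,\w}(f,f)$ via Lemma \ref{L:d1} and the substitution $y=x+w$ (using $\Gamma=-\Gamma$) as $\tfrac12\int_\Gamma|w|^{-d-\alpha}\big(\int_{\R^d}(f(x+w)-f(x))^2\kappa(x/\e,(x+w)/\e;\w)\,dx\big)\,dw$; discarding all $w\notin W$ and using $|w|\le C_0r$ there gives $\int_W\int_{B(0,R)}(f(x+w)-f(x))^2\kappa(x/\e,(x+w)/\e;\w)\,dx\,dw\le 2K(C_0r)^{d+\alpha}$. Then, for each fixed $w$, I would apply H\"older with exponents $(3,\infty,\tfrac32)$ to the factorization $|f(x{+}w){-}f(x)|=\big((f(x{+}w){-}f(x))^2\kappa\big)^{1/3}\,|f(x{+}w){-}f(x)|^{1/3}\,\kappa^{-1/3}$ (with $\kappa=\kappa(x/\e,(x+w)/\e;\w)$), obtaining
\[
\int_{B(0,R)}|f(x{+}w){-}f(x)|\,dx\le(2K)^{1/3}\Big(\int_{B(0,R)}(f(x{+}w){-}f(x))^2\kappa\,dx\Big)^{1/3}\Big(\int_{B(0,R)}\kappa^{-1/2}\,dx\Big)^{2/3}.
\]
The point of this particular splitting is that only $\kappa^{-1/2}$ enters, which is exactly what the hypotheses afford: by \eqref{a2-2-1}, $\kappa^{-1/2}\le\Lambda_1(\tau_{x/\e}\w)^{-1/2}$ under {\bf(A2')}; by \eqref{a2-2-2}, $\kappa^{-1/2}\le\Lambda_1(\tau_{x/\e}\w)^{-1/2}\Lambda_1(\tau_{(x+w)/\e}\w)^{-1/2}$ under {\bf(B2')}. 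Hence $\int_{B(0,R)}\kappa^{-1/2}\,dx$ is dominated -- using Cauchy--Schwarz in the product case -- by integrals of $\Lambda_1^{-1/2}$, respectively $\Lambda_1^{-1}$, over fixed balls, i.e.\ by a quantity $\mathfrak M_\e(R)$ with $\limsup_\e\mathfrak M_\e(R)<\infty$ (Proposition \ref{P:2.1}; Proposition \ref{p3-0} would give a bound uniform in $\e$). Averaging the H\"older bound over $w\in W$, using $|W|\asymp r^d$, concavity of $t\mapsto t^{1/3}$, and $|\int_W\varphi|\le\int_W|\varphi|$, delivers the smearing estimate.

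The last ingredient is geometric: reducing an arbitrary small translation to admissible smeared cone-increments. Fix $e\in\mathrm{int}\,\Gamma$ with $|e|=1$ and $B(e,2\beta)\subset\Gamma$, $\beta\in(0,\tfrac18)$; for $r\in(0,1)$ put $p=(r/\beta)e$ and $q=(4r/\beta)e$. Because $\Gamma$ is an infinite symmetric cone with non-empty interior, the balls $B(p,r)$, $B(q,r)$ and $B(u+p-q-s,r)$ (for all $|u|,|s|\le r$) lie inside $\Gamma$ with all points of norm $\asymp r$. I would then write $f_\e(x+u)-f_\e(x)$ as the telescoping sum of three differences whose two intermediate values are local averages of $f_\e$ over balls of radius $r$ centered at $x+u+p$ and $x+q$ respectively; each difference is, after a translation in $x$ and -- for the middle one -- one further average over $s\in B(0,r)$, exactly a quantity $\tfrac1{|W|}\int_W(f_\e(\cdot+w)-f_\e(\cdot))\,dw$ over an $r$-admissible $W$, \emph{with no residual pointwise term}. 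Applying the smearing estimate to each piece and integrating over $x\in B(0,R)$ yields $\sup_{|u|\le r}\int_{B(0,R)}|f_\e(x+u)-f_\e(x)|\,dx\le C'\,\mathfrak M_\e(R')\,r^{\alpha/3}$ for a slightly larger radius $R'$; letting $\e\to0$ and then $r\to0$ gives the required equicontinuity on $\Omega_0$.

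The main obstacle is conceptual rather than computational. Because $\kappa$ may be degenerate and $\Gamma$ may be a proper cone, the weighted Dirichlet energy controls only \emph{cone-averaged} increments of $f_\e$: a pointwise increment $f_\e(\cdot+w)-f_\e(\cdot)$ cannot be extracted from $\EE^{\e,\w}(f_\e,f_\e)$ -- the diagonal singularity $|w|^{-d-\alpha}$ and the absence of a uniform lower bound on $\kappa$ both block this -- so the entire argument must be run through the smeared objects $\tfrac1{|W|}\int_W f_\e(\cdot+w)\,dw$, and the three-point chain is precisely the device that telescopes a small translation into such pieces without any leftover term. A secondary technical difficulty is negative-moment bookkeeping: under {\bf(B2')} the lower bound $\Lambda_1(\tau_x\w)\Lambda_1(\tau_y\w)$ on $\kappa$ is two-sided, so the naive splitting $\kappa^{1/2}\cdot\kappa^{-1/2}$ would demand $\Ee[\Lambda_1^{-2}]<\infty$, which is not assumed; the exponents $(3,\infty,\tfrac32)$ are tuned so that only $\Ee[\Lambda_1^{-1}]<\infty$, the hypothesis common to {\bf(A2')} and {\bf(B2')}, is used.
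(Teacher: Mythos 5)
Your proof is correct and takes a genuinely different route from the paper's, so let me compare.

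The paper's Lemma~3.5 uses a two-point chain through an intermediate point $z=x+w$ with $w$ in a carefully shrunk cone sector $G_{h,\wt\Theta}$ chosen so that both $z-x$ and $z-(x+he_i)$ remain in $\Gamma$, applies Cauchy--Schwarz (a $(2,2)$-H\"older) to pull $\kappa$ inside the square and extract $\kappa^{-1}\le\Lambda_1^{-1}$, and ends with a rate $h^{\alpha/2}$. The delicate point there is {\bf(B2')}: because $\kappa\ge\Lambda_1(\tau_x)\Lambda_1(\tau_y)$, the $(2,2)$-split produces the double negative moment $\Lambda_1(\tau_x)^{-1}\Lambda_1(\tau_y)^{-1}$, and a further Cauchy--Schwarz in $x$ would demand $\Ee[\Lambda_1^{-2}]<\infty$. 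The paper sidesteps this by tiling $B(0,3r)$ into $O(h^{-d})$ balls of radius $h$ and applying Birkhoff separately to the two factors on product regions, so that only $\Ee[\Lambda_1^{-1}]<\infty$ is used. Your proof avoids the tiling entirely: by using the $(3,\infty,\tfrac32)$-H\"older split, the $L^\infty$ bound on $f_\e$ absorbs one power of the increment, and the remaining coefficient factor is $\kappa^{-1/2}$ rather than $\kappa^{-1}$; under {\bf(B2')} this is $\Lambda_1^{-1/2}\Lambda_1^{-1/2}$ and an ordinary Cauchy--Schwarz in $x$ lands directly on $\Lambda_1^{-1}$, while under {\bf(A2')} it is $\Lambda_1^{-1/2}\in L^1(\Omega;\Pp)$ by Jensen. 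The price is the weaker rate $r^{\alpha/3}$ (which is still more than enough for Fr\'echet--Kolmogorov) and the somewhat heavier geometric bookkeeping of the three-point chain $x\to x+u+p\to x+q\to x$ with the extra $s$-average in the middle leg. The paper's two-point chain is leaner geometrically but its $(2,2)$-H\"older forces the tiling argument in the product case; your tuned-exponent smearing estimate trades geometry for a cleaner, case-uniform treatment of the degenerate coefficient and makes the role of the $L^\infty$ hypothesis transparent. Both arguments correctly need only $\Ee[\Lambda_1^{-1}]<\infty$ and a $\kappa$-free upper bound for the cone-averaged increments, and both extract the equicontinuity constant via Birkhoff on fixed balls, so $\Omega_0$ does not depend on the collection $\{f_\e\}$.
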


\begin{lemma}\label{l5-2}
Suppose that either {\bf(A2')} or   {\bf(B2')}
 holds. Then there is subset $\Omega_0\subset \Omega$ of full probability measure  so that the following holds
 for every $\w \in \Omega_0$.  Suppose that  $\{f_\e:\e\in (0,1]\}$  is a collection
of functions  with  $f_{\e}\in
\FF^{\e,\w}$ for  $\e\in (0,1]$  and
$$
\limsup_{\e \to 0}\EE^{\e,\w}(f_\e,f_\e)<\infty.
$$ Then, for every $r>1$ and $0<|h|\le r/3$,
\begin{equation}\label{l5-2-1}
\limsup_{\e\to 0} \sup_{x_0 \in B(0, r)}\int_{B_{2h}(x_0,r)}|f_\e(x+he_i)-f_\e(x)|\,dx\le c_0(r) h^{\alpha/2}\,\limsup_{\e\to 0}\E^{\e,\w}(f_\e,f_\e)^{1/2},\quad 1\le i \le d,
\end{equation}
where $\{e_i: 1\le i \le d\}$ is the orthonormal basis of $\R^d$,
$B_{2h}(x_0,r):=\{y\in B(x_0,r):|y-\partial B(x_0,r)|>2h\}$ and $c_0(r)$ is a positive constant depending on $r$ but independent of $h$, $f$, $\e$ and $\w$.
\end{lemma}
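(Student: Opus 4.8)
The plan is to reduce the $L^1$‑increment estimate \eqref{l5-2-1} in an arbitrary coordinate direction to increments along directions lying \emph{inside} the cone $\Gamma$, and then to absorb the degeneracy of $\kappa$ by combining the lower bounds in {\bf(A2')} (resp.\ {\bf(B2')}) with the Birkhoff and maximal ergodic theorems (Propositions \ref{P:2.1} and \ref{p3-0}). \emph{Reduction to cone directions.} Since $\Gamma$ is open, symmetric and has nonempty interior, it contains solid subcones around two antipodal directions $\pm\theta_0$; hence one can decompose $he_i=v_1+v_2$ with $v_1,v_2\in\Gamma$ and $|v_1|,|v_2|\asymp h$, and choose a set $W\subset\Gamma$ with $|W|\asymp h^d$ and $c_\Gamma h\le|w|\le C_\Gamma h$ for $w\in W$, such that $w\in\Gamma$ \emph{and} $he_i-w\in\Gamma$ for every $w\in W$ (possible because $v_1,v_2$ lie in the interior of $\Gamma$). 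Writing $f_\e(x+he_i)-f_\e(x)=\big(f_\e(x+he_i)-f_\e(x+w)\big)+\big(f_\e(x+w)-f_\e(x)\big)$ and averaging over $w\in W$, one bounds, for $D:=B_{2h}(x_0,r)$ (note $D\subset B(0,2r)$ and $D+W\subset B(0,3r)$ since $|h|\le r/3$), the quantity $\int_D|f_\e(x+he_i)-f_\e(x)|\,dx$ by $|W|^{-1}$ times the sum of $\int_D\int_W|f_\e(x+w)-f_\e(x)|\,dw\,dx$ and a term of the same structure (displacement $he_i-w\in\Gamma$, base point in $B(0,3r)$).

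For each such term, writing $\EE^{\e,\w}(f,f)=\tfrac12\iint (f(x)-f(y))^2\kappa(x/\e,y/\e;\w)|x-y|^{-d-\alpha}\I_{\{x-y\in\Gamma\}}\,dx\,dy$ and applying Cauchy--Schwarz in $(x,w)$ against the weight $\kappa(x/\e,(x+w)/\e;\w)|w|^{-d-\alpha}$ (legitimate because $w\in\Gamma$),
\[
\int_D\!\int_W|f_\e(x+w)-f_\e(x)|\,dw\,dx\le \big(2\EE^{\e,\w}(f_\e,f_\e)\big)^{1/2}\Big(\int_D\!\int_W\frac{|w|^{d+\alpha}}{\kappa(x/\e,(x+w)/\e;\w)}\,dw\,dx\Big)^{1/2}.
\]
Since $|w|^{d+\alpha}\asymp h^{d+\alpha}$ on $W$ and $|W|\asymp h^d$, and since $|W|^{-1}\cdot h^{(d+\alpha)/2}\cdot h^{d/2}\asymp h^{\alpha/2}$, the proof of \eqref{l5-2-1} reduces to showing that, for $\w$ in a set of full measure,
\[
\limsup_{\e\to0}\int_D\!\int_W\frac{dw\,dx}{\kappa(x/\e,(x+w)/\e;\w)}\le C(r)\,h^d,
\]
with $C(r)$ independent of $h$ and of $x_0$.

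This last bound is easy under {\bf(A2')}: there $\kappa(x,y;\w)\ge\Lambda_1(\tau_x\w)$, so the left‑hand side is at most $|W|\int_{B(0,3r)}\Lambda_1(\tau_{x/\e}\w)^{-1}\,dx$, which by Proposition \ref{P:2.1} (applicable since $\Ee[\Lambda_1^{-1}]<\infty$) tends to $|W|\,|B(0,3r)|\,\Ee[\Lambda_1^{-1}]$, giving $C(r)=|B(0,3r)|\Ee[\Lambda_1^{-1}]$. Under {\bf(B2')}, using $ab+a'b'\ge 2\sqrt{aa'bb'}$ together with Proposition \ref{P:1.4} one may take $\Lambda_1$ with $\Ee[\Lambda_1^{-1}]<\infty$ and $\kappa(x,y;\w)\ge c\,\Lambda_1(\tau_x\w)\Lambda_1(\tau_y\w)$, so one must estimate $\int_D\int_W\big(\Lambda_1(\tau_{x/\e}\w)\Lambda_1(\tau_{(x+w)/\e}\w)\big)^{-1}dw\,dx$. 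Writing $\Lambda_1^{-1}=g_\delta+g^\delta$ with $g_\delta:=\Lambda_1^{-1}\wedge\delta^{-1}$ bounded and $g^\delta:=(\Lambda_1^{-1}-\delta^{-1})_+$ small in $L^1(\Pp)$, the leading term $\int_D\int_W g_\delta(\tau_{x/\e}\w)g_\delta(\tau_{(x+w)/\e}\w)\,dw\,dx$ is handled by the two‑variable ergodic theorem, Lemma \ref{L:3.2}(ii) (valid since $g_\delta$ is bounded), and converges to $\Ee[g_\delta]^2|D||W|\le\Ee[\Lambda_1^{-1}]^2|D||W|\asymp h^d$ with a constant independent of $\delta$; the cross and tail terms, which involve $g^\delta$, are controlled by dominating the inner average $\int_W g^\delta(\tau_{(x+w)/\e}\w)\,dw$ by a maximal function over mesoscopic windows and invoking Proposition \ref{p3-0} together with $\Ee[g^\delta]\to0$ as $\delta\to0$. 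Assembling these steps and taking $\sup_{x_0\in B(0,r)}$ (all constants kept uniform in $x_0$) proves \eqref{l5-2-1}.

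The main obstacle is the product case {\bf(B2')} in the last step. Because only the first negative moment $\Ee[\Lambda_1^{-1}]<\infty$ (equivalently $\Ee[(\nu_1\nu_2)^{-1/2}]<\infty$) is available — not any higher negative moment — Lemma \ref{L:3.2}(ii) cannot be applied directly to $(\Lambda_1\Lambda_1)^{-1}$; the truncation $\Lambda_1^{-1}=g_\delta+g^\delta$ isolates the single piece where it does apply, and the delicate point is to control the remaining cross/tail pieces of the two‑point weight $\big(\Lambda_1(\tau_{x/\e}\w)\Lambda_1(\tau_{(x+w)/\e}\w)\big)^{-1}$ by a genuinely two‑scale (mesoscopic window) ergodic argument \emph{uniformly in $h$}. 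Case {\bf(A2')} is, by comparison, soft: there $\kappa^{-1}$ is bounded by the single‑site quantity $\Lambda_1(\tau_{x/\e}\w)^{-1}$ and one application of Birkhoff's theorem suffices.
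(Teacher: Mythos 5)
Your cone--direction reduction and the treatment of case {\bf(A2')} are essentially identical to the paper's: you build a set $W\subset\Gamma$ of measure $\asymp h^d$ with $W\subset\Gamma$ and $he_i-W\subset\Gamma$ (the paper calls it $G_{h,\widetilde\Theta}$), average the increment through an intermediate point $x+w$, apply Cauchy--Schwarz against the jumping kernel, and for {\bf(A2')} use the one--variable lower bound $\kappa(x,y;\w)\ge\Lambda_1(\tau_x\w)$ together with the Birkhoff theorem. That part is fine.

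The gap is in case {\bf(B2')}. After reducing to the quantity
$\iint_{K_h}\Lambda_1(\tau_{x/\e}\w)^{-1}\Lambda_1(\tau_{y/\e}\w)^{-1}\,dx\,dy$
with only $\Ee[\Lambda_1^{-1}]<\infty$ available, you propose the truncation $\Lambda_1^{-1}=g_\delta+g^\delta$ with $g_\delta=\Lambda_1^{-1}\wedge\delta^{-1}$, invoke Lemma~\ref{L:3.2}(ii) for the $g_\delta g_\delta$ block, and claim the cross and tail blocks vanish because $\Ee[g^\delta]\to0$. But the cross term
$\int_D\int_W g_\delta(\tau_{x/\e}\w)g^\delta(\tau_{(x+w)/\e}\w)\,dw\,dx$
can only be bounded, using $g_\delta\le\delta^{-1}$ and Birkhoff, by $\delta^{-1}|D||W|\,\Ee[g^\delta]$ in the $\limsup_{\e\to0}$, and $\delta^{-1}\Ee[(\Lambda_1^{-1}-\delta^{-1})_+]$ does \emph{not} tend to $0$ as $\delta\to0$ under the sole hypothesis $\Ee[\Lambda_1^{-1}]<\infty$ (take $\Lambda_1^{-1}$ with a density $\sim x^{-2-\epsilon}$ for small $\epsilon<1$). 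Replacing $g_\delta\le\delta^{-1}$ by $g_\delta\le\Lambda_1^{-1}$ leaves you with the original two--point product again, and the ``maximal function over mesoscopic windows'' device is not spelled out; Proposition~\ref{p3-0} only gives a weak--type $(1,1)$ bound on integrals over a fixed cube, not a pointwise a.s.\ smallness uniform in $\e$, $h$ and $x_0$ that decays with $\delta$. As written, the cross terms are not controlled.

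The paper avoids this entirely with a more geometric argument: it covers
$K_h=\{(x,y):x\in B(0,3r),\ y\in x+G_{h,\Theta}\}$
by finitely many \emph{products}
$\bigcup_{i=1}^m B(x_i,h)\times\big(B(x_i,(2N+1)h)\setminus B(x_i,(N-1)h)\big)$
with $\sum_i|B(x_i,h)|\lesssim|B(0,3r)|$. On each rectangle the double integral of $\Lambda_1(\tau_{x/\e}\w)^{-1}\Lambda_1(\tau_{y/\e}\w)^{-1}$ factors, so Proposition~\ref{P:2.1} (Birkhoff, $L^1$ hypothesis only) applies to each factor, giving in the limit $\sum_i\Ee[\Lambda_1^{-1}]^2\,|B(x_i,h)|\cdot|\text{annulus}_i|\lesssim h^d$, with a constant uniform in $h$ and $x_0$ for all $\w$ in a single full--measure set. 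You should replace your truncation/maximal--function step with this covering argument; it requires no higher negative moment, no truncation, and no two--variable ergodic theorem.
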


\begin{proof}
It suffices to prove \eqref{l5-2-1} for every fixed $e_i$ and the case that $h>0$. The argument below is
partially motivated by the proof of the compact embeddings in fractional Sobolev spaces; see
\cite[Theorem 4.54, p.\ 216]{DD}.

For
any $z\in \R^d$,  denote by $(r(z),\theta(z)) \in \R_+\times \Ss^{d-1}$
 its spherical coordinate.
Let $\Theta := \Gamma \cap \Ss^{d-1}$, which has non-empty interior.
Hence there are a non-empty open set $\wt  \Theta \subset \Theta$  and a constant $N\geq 1$ large enough so that
$$
 \left\{\theta\Big(z-\frac{e_i}{N}\Big): 1\le r(z)\le 2, \theta(z)\in \wt
\Theta\right\} \subset \Theta.
$$
Consequently,  $\theta(z-he_i)=\theta(z/(hN)-e_i/N)\in \Theta$ for any $z=(r(z),\theta(z))$ with $Nh\le r(z)\le 2Nh$ and $\theta(z)\in
\wt  \Theta$. Let
$$
G_{h,\wt  \Theta}:=\left\{z\in \R^d: Nh\le r(z)\le 2Nh,
\theta(z)\in \wt  \Theta \right\} \quad \hbox{and} \quad
G_{h,  \Theta}:= \left\{z\in \R^d: Nh\le r(z)\le 2Nh,
\theta(z)\in    \Theta \right\} .
$$
Clearly $G_{h,\wt  \Theta}\subset G_{h, \Theta}$ and $| G_{h,\wt  \Theta}| \asymp |G_{h, \Theta}| \asymp h^d$.

For every $x_0\in B(0,r)$, $0<h<r/3$ and $f_\e$ given in the statement, we have
\begin{equation}\label{e:3.10}\begin{split}
&\int_{B_{2h}(x_0,r)}|f_\e(x+he_i)-f_\e(x)|\,dx   \\
&\leq  c_1 h^{-d} \int_{B_{2h}(x_0,r)} \int_{x+G_{h,\wt  \Theta}}|f_\e(x+he_i)-f_\e(x)|\,dz\, dx  \\
&\le c_1 h^{-d} \left( \int_{B_{2h}(x_0,r)} \int_{x+G_{h,\wt  \Theta}}|f_\e(x+he_i)-f_\e(z)|\,dz\,dx
+\int_{B_{2h}(x_0,r)} \int_{x+G_{h,\wt  \Theta}}|f_\e(x)-f_\e(z)|\,dz\,dx \right)  \\
&=: c_1 h^{-d} \left( I_{x_0,h,1}(f_\e)+I_{x_0,h,2}(f_\e) \right).
\end{split}\end{equation}
By a change of variables,
\begin{equation}\label{l5-2-4}
\begin{split}
I_{x_0,h,1}(f_\e)
&\le\int_{B(0, 2r)}\int_{G_{_{h,\wt  \Theta}}-he_i}|f_\e(x+z+he_i)-f_\e(x+he_i)|\,dz\,dx\\
 &\le  \int_{B(0, 2r)} \int_{G_{h,\Theta}}|f_\e(x+z+he_i)-f_\e(x+he_i)|\,dz\,dx\\
 &\le \int_{B(0,3r)}  \int_{G_{h,\Theta}}|f_\e(x+z)-f_\e(x)|\,dz\,dx.
 \end{split}
\end{equation} Similarly, we have
$$
I_{x_0,h,2}(f_\e)\le \int_{B(0,2r)}   \int_{G_{h,\Theta}}|f_\e(x+z)-f_\e(x)|\,dz\,dx.$$

 (i) Assume that {\bf (A2')} holds. By \eqref{l5-2-4}, the H\"older inequality and Proposition \ref{P:2.1},
 \begin{align*}
\limsup_{\e \to 0} \sup_{x_0\in B(0, r)} I_{x_0,h,1}(f_\e)&\le
\limsup_{\e \to 0} \Big(\int_{B(0,3r)}\int_{G_{h,\Theta}}\frac{(f_\e(x+z)-f_\e(x))^2}{|z|^{d+\alpha}}\Lambda_1(\tau_{x/\e }
\w)\,dz\,dx
\Big)^{1/2}\\
&\quad\times \limsup_{\e \to 0} \Big(\int_{B(0, 3r)}\int_{G_{h,\Theta}}|z|^{d+\alpha}\Lambda_1(\tau_{x/\e } \w)^{-1}\,dz\,dx\Big)^{1/2}\\
&\le c_2 \limsup_{\e \to 0} \bigg[\bigg(\int_{B(0,3r)}\Lambda_1(\tau_{x/\e } \w)^{-1}\,dx\bigg)^{1/2}\E^{\e,\w}(f_\e,f_\e)^{1/2}\bigg]
   h^{d+{\alpha}/{2}}\\
&= c_2    h^{d+{\alpha}/{2}}  \left( |B(0, 3r)| \bE[ \Lambda_1^{-1}] \right)^{1/2}    \limsup_{\e \to 0}  \E^{\e,\w}(f_\e,f_\e)^{1/2}   \\
&=c_3   h^{d+{\alpha}/{2}} \limsup_{\e \to 0}  \E^{\e,\w}(f_\e,f_\e)^{1/2} ,
\end{align*}
where  $c_2, c_3$ are positive constants independent of $x_0$, $h$ and
$\e$, but may depend on $N$ and $r$.

Similarly,  we have
$$
\limsup_{\e \to 0} \sup_{x_0\in B(0, r)}  I_{x_0,h,2}(f_\e)\le
c_4 h^{d+{\alpha}/{2}} \limsup_{\e\to 0}\E^{\e,\w}(f_\e,f_\e)^{1/2} ,
$$
where $c_4$ is a positive constant independent of $x_0$, $h$
and $\e$. Thus we have by \eqref{e:3.10},
$$
\limsup_{\e\to 0}\sup_{x_0\in B(0, r)} \int_{B_{2h}(x_0,r)}|f_\e(x+he_i)-f_\e(x)|\,dx\le
c_5 h^{\alpha/2}  \limsup_{\e\to 0}\E^{\e,\w}(f_\e,f_\e)^{1/2},
$$
which establishes \eqref{l5-2-1}.

(ii) Next we  assume that
 {\bf (B2')} holds.
According to \eqref{l5-2-4} and the H\"older inequality, for any $0<h<r/3$,
\begin{equation}\label{l5-2-5}
\begin{split}
& \limsup_{\e \to 0} \sup_{x_0\in B(0, r)} I_{x_0,h,1}(f_\e) \\
&\le
\limsup_{\e \to 0} \left(\int_{B(0,3r)}\int_{G_{h,\Theta}}\frac{(f_\e(x+z)-f_\e(x))^2}{|z|^{d+\alpha}}\Lambda_1(\tau_{x/\e }
\w) {\Lambda_1}  (\tau_{(x+z)/\e}\w)\,dz\,dx
\right)^{1/2}\\
&  \quad\times
\limsup_{\e \to 0} \left(\int_{B(0,3r)}\int_{G_{h,\Theta}}|z|^{d+\alpha}\Lambda_1(\tau_{x/\e }
\w)^{-1}
{\Lambda_1}(\tau_{(x+z)/\e}\w)  ^{-1}\,dz\,dx\right)^{1/2}\\
&\le
c_7h^{(d+\alpha)/2} \limsup_{\e \to 0}  \left( \E^{\e,\w}(f_\e,f_\e)  \int_{B(0,3r)}\int_{x+G_{h,\Theta}} \Lambda_1(\tau_{x/\e }
\w)^{-1} {\Lambda_1}  (\tau_{y/\e}\w)^{-1}\,dy\,dx   \right)^{1/2}.
\end{split}
\end{equation}
Here $c_7$ is a
positive constant  independent of
$h$ and $\e$.

Below, denote
$$
K_h:=\left\{ (x, y): x \in B( 0, 3
r), y\in x+ G_{h,\Theta} \right\}\subset \R^d\times \R^d.
$$
If we directly apply Lemma \ref{L:3.2}(ii) to estimate $\iint_{K_h}\Lambda_1(\tau_{x/\e}\w)^{-1}
\Lambda_1(\tau_{y/\e}\w)^{-1}\,dx\,dy$, then it requires that $\Lambda_1^{-1}\in L^p(\Omega;\Pp)$ for some
$p>1$. Instead we will adopt a different approach under the weak condition $\Lambda_1^{-1}\in L^1(\Omega;\Pp)$ as in \eqref{a2-2-2} of assumption {\bf(B2')}. For every $0<h\le r/3$, we
can find $\{x_i\}_{i=1}^m\subset \R^d$ such that $B(0,3r)\subset \cup_{i=1}^m B(x_i,h)$ and
$\sum_{i=1}^m |B(x_i,h)|\le c_0|B(0,3r)|$, where $c_0$ is independent of $h,r$, $m$ and $\{x_i\}_{i=1}^m$, but the integer $m$ and
$\{x_i\}_{i=1}^m$ may depend on $h$. It is easy to verify that
$$
K_h \subset \bigcup_{i=1}^m B(x_i,h)\times (B(x_i,(2N+1)h)\setminus B(x_i,(N-1)h)).
$$
Hence,
\begin{align*}
&\lim_{\e \to 0}\iint_{K_h}\Lambda_1(\tau_{x/\e}\w)^{-1}
\Lambda_1(\tau_{y/\e}\w)^{-1}\,dx\,dy\\
&\le
\lim_{\e\to 0}\sum_{i=1}^m \int_{B(x_i,h)}\int_{B(x_i,(2N+1)h)\backslash B(x_i,(N-1)h)}
\Lambda_1(\tau_{x/\e}\w)^{-1}
\Lambda_1(\tau_{y/\e}\w)^{-1}\,dx\,dy\\
&=\sum_{i=1}^m\left(\lim_{\e \to 0}\int_{B(x_i,h)}\Lambda_1(\tau_{x/\e}\w)^{-1}\,dx\right)
\cdot \left(\lim_{\e \to 0}\int_{B(x_i,(2N+1)h)\setminus B(x_i,(N-1)h)}\Lambda_1(\tau_{y/\e}\w)^{-1}\,dy\right)\\
&=\sum_{i=1}^m \Ee[\Lambda^{-1}]^2\cdot |B(x_i,h)|\cdot |B(x_i,(2N+1)h)\backslash B(x_i,(N-1)h)|\le
c_8h^d \sum_{i=1}^m |B(x_i,h)|\le c_9h^d,
\end{align*}
where $c_8$ and $c_9$ are positive constants independent of
$h$ (but may depend on $N$ and $r$). Here, in the second equality above we have used
Proposition \ref{P:2.1} (so only $\Lambda_1^{-1}\in L^1(\Omega;\Pp)$ is required), and the last inequality is due to the fact $\sum_{i=1}^m |B(x_i,h)|\le c_0|B(0,3r)|$.
Therefore, combining this estimate with \eqref{l5-2-5}, we find that
$$ \limsup_{\e \to 0} \sup_{x_0\in B(0, r)} I_{x_0,h,1}(f_\e) \le
c_{10} \Big(\limsup_{\e\to 0} \E^{\e,\w}(f_\e,f_\e)^{1/2}\Big)h^{d+{\alpha}/{2}}.
$$

Similarly, we
can obtain
$$
\limsup_{\e \to 0}\sup_{x_0\in B(0, r)}   I_{x_0,h,2}(f_\e)\le
c_{11} \Big(\limsup_{\e\to 0} \E^{\e,\w}(f_\e,f_\e)^{1/2}\Big)h^{d+{\alpha}/{2}}.
$$
Thus we have by \eqref{e:3.10},
$$
\limsup_{\e\to 0} \sup_{x_0\in B(0, r)} \int_{B_{2h}(x_0,r)}|f_\e(x+he_i)-f_\e(x)|\,dx\le
c_{12} h^{\alpha/2}  \limsup_{\e\to 0}\E^{\e,\w}(f_\e,f_\e)^{1/2},
$$
which establishes \eqref{l5-2-1}.
\end{proof}

\begin{proof}[Proof of Proposition $\ref{P:compact}$] We first claim that for every $r>1$
and $h\in \R^d$ with $|h|<r/3$
\begin{equation}\label{l5-2-1-00}
\limsup_{\e\to 0}
\int_{B_{2|h|}(0,r)}|f_\e(x+h)-f_\e(x)|\,dx\le
c_0(r;\w) h^{\alpha/2}.
\end{equation}
 Indeed, writing $h=(h^{(1)},\cdots, h^{(i)},\cdots, h^{(d)})\in \R^d$, we have for any $0\le i\le d$,
\begin{align*}&\int_{B_{2|h|}(0,r)}
|f_\e(x+(h^{(1)},\cdots, h^{(i)},h^{(i+1)},0,\cdots, 0))-f_\e(x+(h^{(1)},
\cdots, h^{(i)},0,\cdots, 0))|\,dx\\
&=\int_{B_{2|h|}((h^{(1)},\cdots, h^{(i)},0,\cdots, 0),r)}|f_\e(x+(0,\cdots,
0,h^{(i+1)},0,\cdots, 0))-f_\e(x)|\,dx\\
&\le \int_{B_{2|h^{(i+1)}|}((h^{(1)},\cdots, h^{(i)},0,\cdots, 0),r)}|f_\e(x+(0,
\cdots, 0,h^{(i+1)},0,\cdots, 0))-f_\e(x)|\,dx. \end{align*}
Here, we set
$(h^{(1)},\cdots, h^{(i)},0,\cdots, 0)=0$  when $i=0$. This along with \eqref{l5-2-1} gives \eqref{l5-2-1-00}.

 On the other hand, for any $\delta>0$,
$$
\limsup_{\e \to 0} \int_{B(0,r)\setminus {B_{2\delta}(0,r)}} |f_\e(x)|\,dx
\le \big(\limsup_{\e\to 0 }\|f_\e\|_\infty\big)|B(0,r)\setminus {B_{2\delta}(0,r)}|\le c_1(r)\delta,
$$
where $c_1(r)$ is a positive constant independent of $\delta$ and $\e$.

Therefore, for every $r>1$
and $\zeta>0$, there exists a constant
$\delta:=\delta(r,\zeta;\w)$ such that for every $h\in \R^d$ with $|h|<\delta$,
\begin{equation}\label{l5-2-1-000}\limsup_{\e\to 0}
\int_{B_{\delta}(0,r)}|f_\e(x+h)-f_\e(x)|\,dx\le\zeta\end{equation} and
\begin{equation}\label{l5-2-1-11}
\limsup_{\e \to 0} \int_{B(0,r)\setminus {B_{\delta}(0,r)}} |f_\e(x)|\,dx\le \zeta.
\end{equation}

It then follows from \eqref{l5-2-1-000}, \eqref{l5-2-1-11}  and \cite[Theorem 1.95, p.\ 37]{DD}
  that $\{f_\e: \e\in (0,1]\}$ is
pre-compact as $\e \to 0$ in $L^1 (B(0,r);dx)$ for all $r>1$. The proof is complete.
  \end{proof}

\medskip

\subsection{Proofs of Theorems \ref{thm:mainth1} and \ref{T:1.5}, and the assertion of Example \ref{exa2}} \label{S:3.3}

\begin{proof}[Proof  of Theorem  $\ref{thm:mainth1}$]
According to Theorems \ref{T:2.2} and
\ref{C:note3},
we only need to verify that assumption {\bf(A)} implies assumption {\bf(H)}.
By Propositions \ref{P:3.1} and \ref{P:compact},
  assumptions {\bf(A)}  implies properties (i) and (iv) in assumption {\bf(H)}.
So it remains to show that properties (ii) and (iii) in assumption {\bf(H)} hold  as well.

Suppose assumption {\bf(A2)} holds. There is a subset $\Omega_1\subset \Omega$
so that Proposition \ref{P:2.1} holds with $\Lambda_2$ in place of $\nu$.
For   $g\in C_c^1 (\R^d)$,  let $R_0>1$ be such  that ${\rm supp} [g] \subset B(0, R_0)$.
For  every $\w \in \Omega_1$ and $\eta \in (0, 1/(2R_0))$,
\begin{equation}\label{e:ppp}\begin{split}
& \limsup_{\e \to 0} \iint_{\{|x-y|\le \eta\}}(g(x)- g(y))^2\frac{\kappa ({ x/\e},y/\e;\w)}{|x-y|^{d+\alpha}}\,dx\,dy\\
&\leq 2  \limsup_{\e \to 0}  \iint_{\{|x-y|\le \eta\}}(g(y)- g(x))^2\frac{\Lambda_2\big(\tau_{x/\e }\w\big)}{|x-y|^{d+\alpha}}\,dx\,dy   \\
&\leq  2 \| \nabla g\|_\infty^2 \limsup_{\e \to 0}  \int_{B(0,R_0+\eta)}  \Big( \int_{\{|z|\le \eta\} }  \frac{1}{|z|^{d+\alpha-2}}\,dz \Big)
\Lambda_2\big(\tau_{x/\e }\w\big) dx\\
&\le  c_1\eta^{2-\alpha} | B(0,R_0+\eta)| \bE [\Lambda_2].
\end{split}
\end{equation}
 In particular, we have for every  $\w\in \Omega_1$,
$$
\lim_{\eta \to 0}\limsup_{\e \to 0}\iint_{\{|x-y|\le \eta\}}(g(x)- g(y))^2\frac{\kappa ({ x/\e},y/\e;\w)}{|x-y|^{d+\alpha}}\,dx\,dy=0.
$$

On the other hand,  noting that $1/\eta >2R_0$, we have
\begin{equation}\label{e:3.16}\begin{split}
&\limsup_{\e \to 0} \iint_{\{|x-y|>1/\eta\}}(g(y)- g(x))^2\frac{\kappa \big(\tau_{x/\e }\w,  \tau_{y/\e }\w; \w \big)}{|x-y|^{d+\alpha}}\,dx\,dy
\\
&\leq  2 \limsup_{\e \to 0}  \iint_{\{|x-y|>1/\eta\}}(g(y)- g(x))^2\frac{\Lambda_2\big(\tau_{x/\e }\w\big)}{|x-y|^{d+\alpha}}\,dx\,dy \\
& \leq  2 \limsup_{\e \to 0}  \int_{B(0, R_0)} \left( \int_{\{|y-x|>1/\eta\}} \frac{1}{|y-x|^{d+\alpha}} dy \right) g(x)^2
 \Lambda_2\big(\tau_{x/\e }\w\big) \,dx \\
&\quad  + 2 \limsup_{\e \to 0}  \int_{B(0, R_0)^c} \left( \int_{\{y\in B(0, R_0): |y-x|>1/\eta\}} \frac{g(y)^2}{|y-x|^{d+\alpha}} dy \right)
 \Lambda_2\big(\tau_{x/\e }\w\big) \,dx  \\
&\le c_2\| g\|^2_\infty  |B(0, R_0)| \eta^{\alpha/2} \left( \eta^{\alpha/2} \bE [\Lambda_2]  +   \limsup_{\e \to 0}
 \int_{B(0, R_0)^c}  \frac{ \Lambda_2\big(\tau_{x/\e }\w\big) }{  | x|^{d+\alpha/2}}   \,dx  \right) .  \end{split}
\end{equation}

Recall that, by Proposition \ref{P:2.1},
for every $\w \in \Omega_1$,
$$
\lim_{\e\to 0}\e^d\int_{B(0,1/\e)}\Lambda_2(\tau_{x}\w)\,dx=
\lim_{\e\to 0} \int_{B(0,1)}
\Lambda_2(\tau_{x/\e }\w)\,dx  = |B(0,1)|
\bE [ \Lambda_2].
$$
Hence, for every $\w\in \Omega_1$, there exists  $\e_0(\w)>0$ such that
\begin{equation}\label{t1-4-1}
\int_{B(0,1/\e)}\Lambda_2(\tau_{x}\w)\,dx\le 2\bE [ \Lambda_2]|B(0,1)|\e^{-d},\quad \e\in (0,\e_0(\w)).
\end{equation}
Next, we choose $k_0:=k_0(\w)\ge1$ such that
$2^{k_0}\ge  \e_0(\w)^{-1} $.
  Thus, for every   $\w \in \Omega_1$,
\begin{align*}
 \limsup_{\e \to 0}
 \int_{B(0, R_0)^c}  \frac{ \Lambda_2\big(\tau_{x/\e }\w\big) }{  | x|^{d+\alpha/2}}    \,dx
 & \le   \limsup_{\e \to 0}  \int_{B(0,2^{k_0})\backslash B(0,R_0)} \frac{ \Lambda_2\big(\tau_{x/\e }\w\big) }{  | x|^{d+\alpha/2}}    \,dx
 +\limsup_{\e \to 0}\sum_{k=k_0+1}^\infty
 \int_{Q_k}  \frac{ \Lambda_2\big(\tau_{x/\e }\w\big) }{  | x|^{d+\alpha/2}}    \,dx
 \\
 &=:\limsup_{\e \to 0}I_1^\e +\limsup_{\e \to 0}I_2^\e,
 \end{align*} where $Q_k=\{x\in \R^d: 2^{k-1}< |x|\le 2^k\}$.
According to Proposition \ref{P:2.1} again, we obtain
\begin{align*}
\limsup_{\e \to 0}I_1^\e=\int_{B(0,2^{k_0})\backslash B(0,R_0)}\frac{\Ee[\Lambda_2]}{|x|^{d+\alpha/2}}\,dx\le\int_{ B(0,R_0)^c}\frac{\Ee[\Lambda_2]}{|x|^{d+\alpha/2}}\,dx<\infty.
\end{align*}
Meanwhile, by \eqref{t1-4-1}, it holds that
 \begin{align*}
 \limsup_{\e \to 0}I_2^\e &\leq     \limsup_{\e \to 0}  \sum_{k=k_0+1}^\infty   2^{-(k-1)(d+\alpha/2)}
 \int_{B(0,2^k)}    \Lambda_2\big(\tau_{x/\e }\w\big)    \,dx \\
 &=\limsup_{\e \to 0}  \sum_{k=k_0+1}^\infty   2^{-(k-1)(d+\alpha/2)}
 \e^d\int_{B(0,2^k/\e)}    \Lambda_2\big(\tau_{x}\w\big)    \,dx\\
 &
 \le 2 |B(0,1)|\Ee[\Lambda_2]\limsup_{\e \to 0}\sum_{k=k_0+1}^\infty  2^{-(k-1)(d+\alpha/2)}\e^d (2^k/\e)^d\\
 &=c_4\sum_{k=k_0+1}^\infty 2^{-k\alpha/2}\le c_4\sum_{k=1}^\infty 2^{-k\alpha/2}
 <\infty.
\end{align*}
Combining all the estimates above with \eqref{e:3.16} shows that for every $\w \in \Omega_1$,
$$
\lim_{\eta\to0}
\limsup_{\e \to 0}
\iint_{\{|x-y|\ge 1/\eta\}}(g(x)- g(y))^2\frac{\kappa ({ x/\e},y/\e;\w)}{|x-y|^{d+\alpha}}\,dx\,dy=0;
$$
that is,  property (ii) of  assumption {\bf (H)} holds.

Evidently,  there is $\Omega_2\subset \Omega_1$ of full probability measure  so that
 the conclusion of Proposition \ref{P:2.1} holds with $\Lambda_2^p$ in place of $\nu$.
In view of \eqref{a2-2-1} of assumption {\bf (A2)},  for every $\w \in \Omega_2$, by H\"older's inequality,
 \begin{align*}
  \limsup_{\e \to 0} \int_{B(0, R)} \left( \int_{B(0, R)}  \kappa ({x/\e},y/\e;\w) \,dy  \right)^p\, dx
&\leq c_5 R^{(p-1)d/p}   \limsup_{\e \to 0} \iint_{B(0, R)\times B(0, R)}  \kappa ( {x/\e},y/\e;\w)^p \, dx\,dy   \\
&\leq  2^{p+1} c_5 R^{(p-1)d/p}    \limsup_{\e \to 0} \int_{B(0, R)}  \int_{B(0, R)}  \Lambda_2( \tau_{x/\e} \w )^p \, dx\,dy   \\
&= 2^{p+1} c_5 R^{(p-1)d/p}   |B(0, R)|^2 \bE [ \Lambda_2^p ]<\infty.
\end{align*}
Hence property  (iii) of assumption {\bf (H)}  holds as well. This shows that assumption {\bf (A2)} yields properties (ii) and (iii) of  assumption {\bf (H)},
and so proves Theorem \ref{thm:mainth1}.
\end{proof}

\medskip

\begin{proof}[Proof  of Theorem  $\ref{T:1.5}$]
By Propositions \ref{P:3.1} and \ref{P:compact},
 assumption  {\bf (B)} implies properties (i) and (iv) in assumption {\bf(H)}.
So it remains to show that properties (ii) and (iii) in assumption {\bf(H)} hold as well.
Note that under  assumption {\bf(B2)},
  for any $\w\in \Omega$ and $x,y\in \R^d$,
$$\kappa(x,y;\w)\le \Lambda_2(\tau_x\w)\Lambda_2(\tau_y\w)\le \frac{1}{2}(\Lambda_2(\tau_x\w)^2+\Lambda_2(\tau_y\w)^2).$$
With this at hand, we can follow the argument
in the proof of Theorem  \ref{thm:mainth1}
 to show that assumption {\bf (B2)} implies  property (ii) of  assumption {\bf (H)}.   Furthermore,
an analogous argument also shows that  assumption {\bf (B2)} implies  property (iii) of  assumption {\bf (H)}.

\smallskip

 We point out that the second moment condition on $\Lambda_2$  in assumption {\bf (B2)} is only used in establishing \eqref{e:2.3}    (and  \eqref{eq:domain}).
 The remaining
properties in (ii)
and (iii) of assumption {\bf (H)} hold under a weaker assumption that $\bE [ \Lambda_2^p ] <\infty$ for some $p>1$.
In particular, \eqref{e:2.3-0} follows from the corresponding argument in the proof of Theorem \ref{thm:mainth1} by using Lemma \ref{L:3.2}(ii) instead of Proposition \ref{P:2.1}.
\end{proof}

\medskip

\begin{proof}[Proof of the assertion in Example $\ref{exa2}$]
 Define
 \[
 \mu:=\frac 1{Z_\mu}\frac{\lambda_2}{\lambda_1},  \quad \hbox{where  }
 Z_\mu:=\Ee \left[{\lambda_2}/{\lambda_1}\right].
 \]
 Clearly, $\Ee [ \mu]=1$, and
we can  rewrite   \eqref{e:prod-ex} as
$$
 \LL^{\e, \w} f(x)=\frac 1{Z_\mu  \mu ( \tau_{x/\e} \omega)}\,  {\rm p.v.} \int_{\R^d}  (f(y)-f(x))
 \frac{\lambda_2( \tau_{x/\e} \w)\lambda_2(\tau_{y/\e}\w)}{|y-x|^{d+\alpha}}
 \I_{\Gamma}(y-x) \,dy.
 $$
Thus, the operator $\LL^{\e, \w}$ is symmetric in $L^2(\R^d;\mu( \tau_{x/\e} \w)\,dx)$, and  is associated with
the symmetric  Dirichlet form $(\EE^{\e,\w}, \FF^{\e, \w})$ on $L^2(\R^d; \mu(\tau_{x/\e} \w) \,dx)$  given by
$$
\EE^{\e,\w}(f,g)=\frac{1}{2 Z_\mu}\iint_{\R^d\times \R^d} (f(x)-f(y))(g(x)-g(y))\frac{\lambda_2(\tau_{x/\e} \w)\lambda_2(\tau_{y/\e} \w)}{|x-y|^{d+\alpha}}\I_{\Gamma}(y-x) \,dx\,dy.
$$
    Under assumptions of the example, we see that \eqref{e:1.17} and \eqref{e:1.16} are satisfied with
  $$ \nu_1 = \nu_2 = \frac1{  \sqrt{ Z_\mu }} \lambda_2,
  $$
and $\bE [ \mu^p]<\infty$. Hence, by Proposition \ref{P:1.4}, assumption {\bf (B)} is fulfilled. Therefore, the conclusion of the example follows readily  from  Theorem \ref{T:1.5}.
\end{proof}

\section{Appendix}\label{Appdx}\label{S:4}

\subsection{Proofs of Lemma \ref{L:d1} and Proposition \ref{P:1.4}}

\begin{proof}[Proof of Lemma $\ref{L:d1}$] Let $\LL^\w$ and $(P_t^{\w})_{t\ge0}$ be the generator and the semigroup
of the process $X^{\w}$, respectively. Let $\LL^{\e,\w}$ and  $(P_t^{\varepsilon,\w})_{t\ge0}$ be the generator and the semigroup of the process $X^{\varepsilon,\w}$, respectively. Then, for any $f\in C_c^\infty(\R^d)$,
$$
P_t^{\varepsilon,\w}f(x)=\Ee_x^{\w}
[ f (\varepsilon X_{t/\varepsilon^{\alpha}}^{\w})]=P_{t/\varepsilon^{\alpha}}^{\w} f^{(\varepsilon)}(x/\varepsilon),$$ where $f^{(\varepsilon)}(x)=f(\varepsilon x)$. Thus,
$$\LL^{\e,\w}f(x)=\frac{d P_t^{\varepsilon,\w}f(x)}{d t}\Big|_{t=0}=
\varepsilon^{-\alpha} \LL^{\w} f^{(\varepsilon)}(x/\varepsilon).
$$

For any $f,g\in C_c^\infty(\R^d)$, by the facts that the operator $\LL^\w$ is symmetric on $L^2(\R^d;\mu^\w(dx))$ and $$-\int_{\R^d} \LL^\w f(x) g(x)\,\mu^{\w}(dx)= \EE^\w(f,g),$$ we find that
\begin{align*}
&-\int_{\R^d} \LL^{\e,\w}f(x)g(x)\,\mu^{\e,\w}(dx)\\
&= -
\int_{\R^d} \e^{-\alpha}\LL^\w f^{(\e)}(x/\e )g(x)\mu( \tau_{x/\e} \w)\,dx\\
&=-\int_{\R^d} \e^{-\alpha}\LL^\w f^{(\e)}(x/\e )g^{(\e)}(x/\e )\mu( \tau_{x/\e} \w)\,dx\\
&=-{\e^{d-\alpha}}\int_{\R^d}\LL^\w f^{(\e)}(x)g^{(\e)}(x)\mu(\tau_x\w)\,dx\\
&=\frac{\e^{d-\alpha}}{2}\iint_{\R^d\times \R^d\backslash\Delta}\big(f^{(\e)}(y)-f^{(\e)}(x)\big)\big(g^{(\e)}(y)-g^{(\e)}(x)\big)\frac{\kappa (x,y;\w)}{|x-y|^{d+\alpha}}\I_{\{x-y\in \Gamma\}}\,dx\,dy\\
&=\frac{\e^{d-\alpha}}{2}\iint_{\R^d\times \R^d\backslash\Delta}\big(f(\e y)-f(\e x)\big)\big(g(\e y)-g(\e x)\big)\frac{\kappa (x,y;\w)}{|x-y|^{d+\alpha}}\I_{\{x-y\in \Gamma\}}\,dx\,dy\\
&=\frac{1}{2}\iint_{\R^d\times \R^d\backslash\Delta}\big(f(y)-f(x)\big)\big(g(y)-g(x)\big)\frac{\kappa\big(x/\e ,y/\e;\w\big)}{|x-y|^{d+\alpha}}\I_{\{x-y\in \Gamma\}}\,dx\,dy.
\end{align*}In particular,
$$ \int_{\R^d} \LL^{\e,\w}f(x)g(x)\,\mu^{\e,\w}(dx)= \int_{\R^d} \LL^{\e,\w}g(x)f(x)\,\mu^{\e,\w}(dx).$$ Hence, the desired assertion follows.
\end{proof}

\begin{proof}[Proof of Proposition $\ref{P:1.4}$]

(i) Suppose that \eqref{e:1.16} holds. Clearly \eqref{t2-1-**} and \eqref{a2-2-2} hold by taking
 $\Lambda_1 (\w) :=  \sqrt{2 \nu_1 (\w)  \nu_2 (\w) }$
 and $\Lambda_2 (\w) := \nu_1 (\w)+ \nu_2 (\w) $.

(ii) Conversely,  suppose that
there are non-negative random variables $\Lambda_1\leq \Lambda_2 $ on $(\Omega, \FF,\Pp)$ so that \eqref{t2-1-**} and \eqref{a2-2-2}
hold.
Taking $x=y=0$ in \eqref{a2-2-2} yields from \eqref{e:1.17} that
\begin{equation}\label{e:4.1}
\Lambda_1^2 (\w) \leq 2 \nu_1 (\w) \nu_2 (\w) \leq \Lambda_2 (\w)^2.
\end{equation}
This in particular implies that
 $$
\bE [ (\nu_1\nu_2)^{-1/2}]  \leq 2^{1/2} \bE [ \Lambda_1^{-1}] < \infty.
$$
We claim that there is a  constant $C >0$ so that
\begin{equation} \label{e:4.2}
  \nu_i (\w ) \leq C  \Lambda_2  (\w) \quad \bP \hbox{-a.s. for } i=1, 2.
\end{equation}
This together with \eqref{e:4.1} will imply that \eqref{e:1.16} holds.
 Since $\bP( 0 < \Lambda_1 \leq \Lambda_2 <\infty) =1$,  there are constants $0<a<b$ so that
$\bP(\Omega_1 )>3/4$, where
$$
\Omega_1 := \left\{\w \in \Omega:  a\leq \Lambda_1 (\w)\leq  \Lambda_2 (\w)\leq b \hbox{ and } a\leq \min \{\nu_1 (\w), \nu_2 (\w)\}
\leq \max \{\nu_1 (\w), \nu_2 (\w)\}\leq b \right\}.
$$
Define
$$
A =\{\w \in \Omega: \tau_x \w \in \Omega_1 \hbox{ for some } x\in \R^d \}.
$$
Clearly, $\Omega_1 \subset A  $ and $\tau_x A  \subset A $ for every $x\in \R^d$. Since  $\tau_x \circ \tau_y=\tau_{x+y}$ for every $x, y\in \R^d$ and $\tau_0={\rm id}$, we have $\tau_x A =A $ for every $x\in \R^d$.  Hence $\bP(A )=1$  as the family of shift operators $\{\tau_x; x\in \R^d\}$ is ergodic.
By  \eqref{e:1.17} and \eqref{a2-2-2},
$$
\nu_1(\tau_x\w)\nu_2(\tau_y\w) +\nu_1(\tau_y\w)\nu_2(\tau_x\w)
\leq \Lambda_2(\tau_x\w)\Lambda_2(\tau_y\w).
$$
For every $\w \in A $, take $x=0$ and  $y\in \R^d$ so that $\tau_y \w \in \Omega_1$ in the above display.   Then this implies that
$$
\nu_1 ( \w ) + \nu_2 (\w)  \leq (b/a) \Lambda_2 (  \w).
$$
This proves the claim  \eqref{e:4.2}, and hence completes the proof  of the  Proposition.
  \end{proof}

\subsection{Mosco convergence of  Dirichlet forms}\label{Mso-s}

In this part, we will study the Mosco convergence for the Dirichlet
forms $(\E^{\e,\w},\F^{\e,\w})$, which yields the strong convergence
of the associated semigroups and resolvents in $L^2$-senses along
any fixed sequence $\{\e_n\}_{n\ge1}$ with $\e_n \to 0$ as $n\to \infty$.

For this, we first recall some known results from \cite{KS, Ko1,Ko2} on Mosco convergence with changing reference measures, and adapt them to our setting. Consider the Hilbert spaces $L^2(\R^d;\mu^{\e,\w}(dx))$ and $L^2(\R^d;dx)$ as these in the present paper. For simplicity, we drop the parameter $\w$, and write $L^2(\R^d; \mu^{\e_n,\w}(dx))$ as $L^2(\R^d;\mu_{\e_n}(dx))$. Then, by Proposition \ref{P:2.1} and $\Ee [\mu]=1$,  there is $\Omega_0\subset \Omega$ of full probability measure so that for all $\w \in \Omega_0$  and $f\in C_c^\infty(\R^d)$,
$$\lim_{n\to\infty}\|f\|_{L^2(\R^d;\mu_{\e_n}(dx))}=\|f\|_{L^2(\R^d;dx)};$$ that is, \emph{$L^2(\R^d;\mu_{\e_n}(dx))$ converges to $L^2(\R^d;dx) $} in the sense of \cite{KS}, see \cite[p.\ 611]{KS} or \cite[Definition 2.1]{Ko2}.

Following  \cite[Definitions 2.4 and 2.5]{KS} or \cite[Definitions 2.2 and 2.2]{Ko2},
we say that a sequence of functions $\{f_n\}_{n\ge1}$ with $f_n\in L^2(\R^d;\mu_{\e_n}(dx))$ for all $n\ge1$
\emph{strongly converges in $L^2$-spaces}
to $f\in L^2(\R^d;dx)$, if there exists a sequence of functions $\{g_m\}_{m\ge1}\subset C_c^\infty(\R^d)$ such that
\begin{equation}\label{e:kkk}\lim_{m\to\infty}\|g_m-f\|_{L^2(\R^d;dx)}=0, \quad \lim_{m\to\infty}\limsup_{n\to\infty} \|g_m-f_n\|_{L^2(\R^d;\mu_{\e_n}(dx))}=0;\end{equation} we say that a sequence of functions $\{f_n\}_{n\ge1}$ with $f_n\in L^2(\R^d;\mu_{\e_n}(dx))$ for all $n\ge1$
\emph{weakly converges in $L^2$-spaces}
to $f\in L^2(\R^d;dx)$, if for every sequence $\{g_n\}_{n\ge1}$  with $g_n\in L^2(\R^d;\mu_{\e_n}(dx))$ for all $n\ge1$ and strongly convergent to $g\in L^2(\R^d;dx)$,
$$\lim_{n\to\infty} \langle f_n, g_n\rangle_{L^2(\R^d;\mu_{\e_n}(dx))}= \langle f, g\rangle_{L^2(\R^d;dx)}.$$ It is obvious that strong convergence in $L^2$-spaces is stronger than weak convergence in $L^2$-spaces; see \cite[Lemma 2.1(4)]{KS}.

For any $\e>0$, let $(\EE^{\varepsilon,\w},
\FF^{\e,\w})$ be a regular Dirichlet form on
$L^2(\R^d;\mu^{\e,\w}(dx))$ given by \eqref{eq:niebiws}, and $(
\E^K,  \F^K)$ be a regular Dirichlet form on $L^2(\R^d;dx)$ given by
\eqref{eq:DeDFK}.   Following \cite[Definition 2.11]{KS}, a sequence
of Dirichlet forms $\{(\E^{\e_n},\F^{\e_n})\}_{n\ge1}$ on
$L^2(\R^d;\mu_{\e_n}(dx))$ is said to be \emph{Mosco convergent} to a
Dirichlet form $(\E^K,\F^K)$ on $L^2(\R^d;dx)$, if
\begin{itemize}
\item[(i)] for every sequence $\{f_n\}_{n\ge1}$ with $f_n\in L^2(\R^d;\mu_{\e_n}(dx))$ for all $n\ge1$ and
converging weakly to $f\in L^2(\R^d;dx)$,
$$
\liminf_{n\to\infty} \E^{\e_n}(f_n,f_n)\ge \E^K(f,f).
$$

\item[(ii)] for any $f\in L^2(\R^d;dx)$, there is a sequence
$\{f_n\}_{\ge1}$ with $f_n\in L^2(\R^d;\mu_{\e_n}(dx))$ for all $n\ge1$ and converging strongly to $f$ such that
$$\limsup_{n\to\infty}\E^{\e_n}(f_n,f_n)\le \E^K(f,f).$$
\end{itemize}

In the above
definition, we have extended the definition of
$\E^{\e_n} (f, f)$ and $\E^K (f, f)$ by
$\E^{\e_n}(f,f)=\infty$ for $f\in L^2(\R^d;\mu_{\e_n}(dx))\backslash \FF^{\e_n}$ and $\E^K(f,f)=\infty$ for $f\in L^2(\R^d;dx)\backslash \FF^K$,
respectively.

\begin{remark}\label{R:4.2} \rm
\begin{itemize}
\item[(i)] The condition (i) in the definition of Mosco convergence holds
true, if for every sequence $\{f_n\}_{n\ge1}$ with $f_n\in
L^2(\R^d;\mu_{\e_n}(dx))$ for all $n\ge1$ and converging weakly to $f\in
L^2(\R^d;dx)$, then $\liminf_{n\to\infty} \E^{\e_n}(f_n,f)\ge
\E^K(f,f).$ See \cite[p.\ 726, the proof Theorem 4.7]{CKK} for the
proof.

 \item[(ii)] \ Note that $C_c^\infty(\R^d)\subset \F$ is a core of both
Dirichlet forms $(\E^{\e_n},\F^{\e_n})$ and $(\E^K,\F^K)$. According
to \cite[Lemma 2.8]{Ko2}, the condition (ii) in the definition of
Mosco convergence holds true, if and only if for any $f\in
C_c^\infty(\R^d)$, $\lim_{n\to\infty} \E^{\e_n}(f,f)=\E^K(f,f).$ See
\cite[Theorem 2.3]{BBCK} or \cite[Lemma 8.2]{CKK} for the case of
the Mosco convergence without changing reference measures.
\end{itemize}
\end{remark}

\begin{theorem}\label{T:mos} Under assumption {\bf(H)},
there is $\Omega_0\subset \Omega$ of full probability measure so that for any $\w\in \Omega_0$ and any $\{\e_n\}_{n\ge 1}$ with $\e_n\to 0$ as $n\to\infty$, Dirichlet forms
$(\E^{\e_n,\w},\F^{\e_n,\w})$  convergence to
$(\E^K,\F^K)$ in the sense of Mosco as $\lim_{n\to\infty}\e_n=0$.\end{theorem}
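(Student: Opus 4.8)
The plan is to verify, on a single set $\Omega_0\subset\Omega$ of full probability measure — the intersection of the exceptional set in Assumption {\bf (H)} with the one in Proposition \ref{P:2.1} — the two defining properties of Mosco convergence for every $\w\in\Omega_0$ and every sequence $\e_n\to0$, using the reductions recorded in Remark \ref{R:4.2}. Throughout write $\mu_{\e_n}:=\mu^{\e_n,\w}$, and recall that $C_c^\infty(\R^d)$ is a common core of $(\E^{\e_n,\w},\F^{\e_n,\w})$ and of $(\E^K,\F^K)$.

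\emph{The recovery property} (property (ii) of Mosco convergence) is the routine half. By Remark \ref{R:4.2}(ii) it suffices to prove $\lim_{n\to\infty}\E^{\e_n,\w}(g,g)=\E^K(g,g)$ for every $g\in C_c^\infty(\R^d)$. For $\eta\in(0,1)$ we decompose the double integral defining $2\E^{\e_n,\w}(g,g)$ according to whether $|x-y|\le\eta$, $\eta<|x-y|<1/\eta$, or $|x-y|\ge1/\eta$. By \eqref{e:2.3} and \eqref{e:2.3-0} of Assumption {\bf (H)}(ii), the first and third pieces vanish after taking $\limsup_{n\to\infty}$ and then $\eta\to0$; by Assumption {\bf (H)}(iv) with $f=g$, the middle piece converges as $n\to\infty$ to the same integral with $\kappa(x/\e_n,y/\e_n;\w)$ replaced by $K(x-y)$, which in turn increases to $2\E^K(g,g)$ as $\eta\to0$ by monotone convergence (here one uses that $K$ is bounded and $g$ smooth with compact support, so $\E^K(g,g)<\infty$). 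Chaining the three limits gives the claim; this is exactly the $I_1,I_2,I_3$ analysis from the proof of Theorem \ref{T:2.2} specialized to the constant sequence $f_\e\equiv g$.

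\emph{The lower-semicontinuity property} (property (i)) carries the real work. By Remark \ref{R:4.2}(i) it is enough to show $\liminf_{n\to\infty}\E^{\e_n,\w}(f_n,f_n)\ge\E^K(f,f)$ whenever $f_n\in L^2(\R^d;\mu_{\e_n}(dx))$ converges weakly in the $L^2$-sense to $f\in L^2(\R^d;dx)$. One may assume $C:=\liminf_{n\to\infty}\E^{\e_n,\w}(f_n,f_n)<\infty$ and, after passing to a subsequence, that $\sup_n\E^{\e_n,\w}(f_n,f_n)\le C+1$ and $\sup_n\|f_n\|_{L^2(\R^d;\mu_{\e_n}(dx))}<\infty$. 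For every $g\in C_c^\infty(\R^d)$ the inequality $\E^{\e_n,\w}(f_n-g,f_n-g)\ge0$ gives
$$
\E^{\e_n,\w}(f_n,f_n)\ \ge\ 2\,\E^{\e_n,\w}(f_n,g)\ -\ \E^{\e_n,\w}(g,g)\qquad(g\in C_c^\infty(\R^d)),
$$
and, since $\E^{\e_n,\w}(g,g)\to\E^K(g,g)$ by the first part, the matter reduces to the cross-term convergence $\lim_{n\to\infty}\E^{\e_n,\w}(f_n,g)=\E^K(f,g)$ for $g\in C_c^\infty(\R^d)$, where $\E^K(f,g):=-\langle f,\LL^K g\rangle_{L^2(\R^d;dx)}$ makes sense for all $f\in L^2(\R^d;dx)$. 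Granting this, taking the supremum over $g$ in the core $C_c^\infty(\R^d)$ recovers $\E^K(f,f)$ by the standard representation of a closed form, with the convention that the supremum equals $+\infty$ when $f\notin\F^K$.

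To prove the cross-term convergence one splits $2\E^{\e_n,\w}(f_n,g)$ into the same three $|x-y|$-regions. On $\{|x-y|\le\eta\}$ and $\{|x-y|\ge1/\eta\}$, Cauchy--Schwarz bounds the contribution by $(C+1)^{1/2}$ times the square root of the corresponding integral with $(g(x)-g(y))^2$ in place of $(f_n(x)-f_n(y))(g(x)-g(y))$, which is negligible uniformly in $n$ as $\eta\to0$ by \eqref{e:2.3}--\eqref{e:2.3-0}. The middle region is the crux: Assumption {\bf (H)}(iv) allows only a \emph{fixed, bounded} function in its first argument, whereas here that argument carries the varying, a priori unbounded, $f_n$. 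The remedy is truncation: since $f_n^N:=(-N)\vee f_n\wedge N$ is a normal contraction of $f_n$, one has $\E^{\e_n,\w}(f_n^N,f_n^N)\le\E^{\e_n,\w}(f_n,f_n)$ and $\|f_n^N\|_\infty\le N$, so Proposition \ref{P:compact} applies to $\{f_n^N\}_n$ and, along a subsequence, $f_n^N\to h^N$ in $L^1_{loc}(\R^d;dx)$ with $h^N$ bounded; Assumption {\bf (H)}(iv) then identifies the $n\to\infty$ limit of the truncated middle term with the $K$-kernel expression in $h^N$, the residual coming from $f_n-f_n^N$ being controlled by the uniform energy bound and Assumption {\bf (H)}(iii) (the change of reference measure being absorbed through $\bE[\mu^p]<\infty$ and Proposition \ref{P:2.1}), and one finishes by sending $N\to\infty$ and then $\eta\to0$. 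I expect this last stage to be the main obstacle — identifying $\lim_N h^N$ with $f$ and bounding the truncation error uniformly in $n$, given that $f_n$ is merely weakly convergent, possibly unbounded, and that the reference measures vary; note this difficulty is absent in Theorem \ref{T:2.2}, where the functions are resolvents, hence automatically bounded by $\lambda^{-1}\|f\|_\infty$. A shorter alternative sidesteps it: Theorem \ref{C:note3} already gives strong $L^2$-convergence $U_\lambda^{\e_n,\w}h\to U_\lambda^K h$ for every $h\in C_c(\R^d)$, which extends, by density and the uniform bound $\|U_\lambda^{\e_n,\w}\|\le\lambda^{-1}$, to strong convergence of the resolvents in the sense of Kuwae--Shioya, and this is equivalent to Mosco convergence of the forms by \cite{KS, Ko1, Ko2}.
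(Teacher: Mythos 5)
Your verification of property (ii) is correct and coincides with the paper's argument: combine Assumption {\bf (H)}(ii) for the small and large jumps, Assumption {\bf (H)}(iv) for the middle annulus, and invoke Remark \ref{R:4.2}(ii).

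Your direct attack on property (i) has a genuine gap, which you yourself flag. After the convexity inequality $\E^{\e_n,\w}(f_n,f_n)\ge 2\E^{\e_n,\w}(f_n,g)-\E^{\e_n,\w}(g,g)$ you must pass to the limit in the cross term $\E^{\e_n,\w}(f_n,g)$. Since $f_n$ is unbounded and merely weakly convergent, you truncate to $f_n^N$; but then the residual $\E^{\e_n,\w}(f_n-f_n^N,g)$ must vanish as $N\to\infty$ \emph{uniformly in $n$}, and you give no mechanism for this. The normal-contraction property only yields $\E^{\e_n,\w}(f_n-f_n^N,f_n-f_n^N)\le\E^{\e_n,\w}(f_n,f_n)$, with no decay in $N$, and the $L^2(\mu_{\e_n})$-bound on $f_n$ does not give the $L^{p/(p-1)}_{loc}(dx)$-smallness of $(f_n-f_n^N)\I_{\{|f_n|>N\}}$ needed to pair against $\LL^n_\eta g$. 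The paper avoids this entirely by inserting the truncation \emph{before} the convexity step: using $\E^{\e_n,\w}(f_n,f_n)\ge\E^{\e_n,\w}(f_n^N,f_n^N)$ and Cauchy--Schwarz against a fixed $f_*\in C_c^\infty(\R^d)$ close to $f$, the resulting error term is $\langle f^N-f,\LL^K_\eta f_*\rangle$ for the \emph{fixed limit} $f\in L^2(\R^d;dx)$, which tends to $0$ as $N\to\infty$ with no uniformity in $n$ required; one then sends $N\to\infty$, then $\gamma\to0$, then $\eta\to0$. Your scheme can be repaired along the same lines (apply the convexity inequality to $f_n^N$, obtain $\liminf_n\E^{\e_n,\w}(f_n,f_n)\ge 2\E^K(f^N,g)-\E^K(g,g)$ using $f_n^N\to f^N$ in $L^q_{loc}$ and {\bf (H)}(iii)--(iv), take $\sup_g$ to get $\E^K(f^N,f^N)$, and finally $N\to\infty$ using closedness of $\E^K$), but as written the argument does not close.

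Your proposed shortcut, however, is a valid alternative route and genuinely different from the paper's. Corollary \ref{C:note333} (whose proof, via Theorem \ref{C:note3}, does not rely on Mosco convergence) gives strong $L^2$-convergence $U_\lambda^{\e,\w}f\to U_\lambda^K f$ for each $f\in C_c(\R^d)$. A straightforward diagonal/density argument, using $\|U_\lambda^{\e_n,\w}\|\le\lambda^{-1}$ and the definition of strong convergence in $L^2$-spaces with changing reference measures, upgrades this to strong convergence of the resolvent operators $U_\lambda^{\e_n,\w}\to U_\lambda^K$ in the Kuwae--Shioya sense, which by \cite[Theorem 2.4]{KS} is equivalent to Mosco convergence. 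The paper instead proves Mosco convergence directly and records Corollary \ref{C:or} as a consequence; you are reversing that implication. The direct route has the virtue of being self-contained at the level of the bilinear forms, while your alternative piggybacks on the heat-kernel estimates used to prove Theorem \ref{C:note3}, so the two approaches distribute the analytic work differently but reach the same conclusion.
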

\begin{proof} Throughout the proof, we drop the
parameter $\w$.
According to (ii) and (iv) of assumption {\bf(H)} and Proposition \ref{P:2.1}, we know that there is $\Omega_0\subset \Omega$ of full probability measure so that for any $\w\in \Omega_0$ and $f\in C_c^\infty(\R^d)$,
$$
\lim_{n \to \infty}\EE^{\e_n}(f,f)=\EE^K(f,f).
$$
By Remark \ref{R:4.2}(ii),   property (ii) in the
definition of Mosco convergence holds true. So it remains  to verify property  (i).

 Let $\{f_n\}_{n\ge1}$ be such that $f_n\in L^2(\R^d;\mu_{\e_n}(dx))$ for any $n\ge1$ and
converging weakly to $f$ in $L^2(\R^d;dx)$. Without loss of
generality, we assume that
$\E^{\e_n}(f_n,f_n)$ converges and
\begin{equation}\label{t6-2-1}
\sup_{n\ge1} \big( \E^{\e_n}(f_n,f_n)+\|f_n\|_{L^2(\R^d;\mu_{\e_n}(dx))}^2 \big) <\infty,
\end{equation}
thanks to \cite[Lemma 2.3]{KS}.
For any $N>0$, let $f^N(x):=(-N)\vee\left(f(x)\wedge N\right)$. Note
that $\E^{\e_n}(f^N_n,f^N_n)\le \E^{\e_n}(f_n,f_n)$.
According to \eqref{t6-2-1}, the fact $\|f_n^N\|_\infty\le N$ and (i) in assumption {\bf(H)},
$\{f_n^N\}_{n\ge 1}$ is a pre-compact set in $L_{loc}^1(\R^d;dx)$.
So there exist a measurable function $f^{*,N}\in L_{loc}^1(\R^d;dx)$ and a
subsequence $\{f_{n_k}\}_{k\ge1}$ of $\{f_n\}_{n\ge1}$ such that for all $r\ge1$,
$$
\lim_{k\to\infty}\|f_{n_k}^N-f^{*,N}\|_{L^{1}(B(0,r);dx)}=0.
$$
Since $\|f_n^N\|_\infty\le N$ for all $n\ge1$, $\|f^{*,N}\|_\infty\le N$ and so for any $r\ge1$, $N>0$ and
$1<q<\infty$,
\begin{equation}\label{e:note}
\lim_{k\to\infty}\|f_{n_k}^N-f^{*,N}\|_{L^{q}(B(0,r);dx)}=0.
\end{equation}
Moreover, it is easy to see that $f^{*,N}(x)=f^{*,M}(x)$ for all $N\le M$ and a.e.\ $ x\in \R^d$ with
$|f^{*,N}(x)|\le N$. Therefore, we can find a measurable function $f_0\in L^1_{loc}(\R^d;dx)$ such that $f_0^N(x)=f^{*,N}(x)$ for
a.e.\ $x\in \R^d$ and $N>0$. Combining this with \eqref{e:note} and the fact that $f_n$ converges weakly to $f$ yields
that $f_0=f$ a.e. and \eqref{e:note} holds with $f^{*,N}$ replaced by $f^N$. That is,
for any $r\ge1$, $N>0$ and
$1<q<\infty$,
\begin{equation}\label{e:note-}
\lim_{k\to\infty}\|f_{n_k}^N-f^N\|_{L^{q}(B(0,r);dx)}=0.
\end{equation}
For notational simplicity, in the following we denote the subsequence $\{f_{n_k}\}_{k\ge1}$ by
$\{f_n\}_{n\ge1}$.

We first suppose that $f\in C_c^\infty(\R^d)$. Take $N>\|f\|_\infty$; that is, $f(x)=f^N(x)$. Then,
following the argument for \eqref{e:oopp} in the proof of Theorem \ref{T:2.2} line
by line,
$$\lim_{n\to\infty}\E^{\e_n}(f_n^N,f)=\E^K(f,f),$$ which along with Remark
\ref{R:4.2}(i)
gives us that
$$\lim_{n\to\infty}\E^{\e_n}(f_n,f_n)\ge \lim_{n\to\infty}\E^{\e_n}(f^N_n,f^N_n)\ge\E^K(f,f).$$

For the general case, we partly follow the proof of \cite[Theorem
4.7]{CKK}. Without loss of generality, we assume that $f\in \FF^K$. For fixed $\gamma>0$, we choose $f_*\in
C_c^\infty(\R^d)$ such that
$$\E^{K}(f-f_*,f-f_*)+\|f-f_*\|_{L^2(\R^d;dx)}^2\le \gamma^2.$$ For any $\eta\in (0,1)$, denote by $$
\E^{\e_n}_{\eta}(f,f)=\frac{1}{2}\iint_{\{\eta<|z|<1/\eta,\,z\in \Gamma\}}(f(x+z)-f(x))^2\frac{\kappa (0,z/\e_n ;\tau_{x/\e_n }\w)}{|z|^{d+\alpha}}\,dz\,dx$$
and
$$\LL^{n}_{\eta}f(x)=\int_{\{\eta<|z|<1/\eta,\, z\in \Gamma\}}(f(x+z)-f(x))\frac{\kappa (0,z/\e_n ;\tau_{x/\e_n }\w)}{|z|^{d+\alpha}}\,dz.$$
Similarly, we can define $\E_{\eta}^K(f,f)$ and $\LL_{\eta}^Kf.$
Then,
\begin{align*}\left|\E^{\e_n}_{\eta}(f_n^N,f_*)-\E_{\eta}^K(f_*,f_*)\right|
&\le \big|\langle f^N, (\LL_{\eta}^{n}-\LL_{\eta}^K)f_*\rangle_{L^2(\R^d;dx)}\big|+\big|\langle f^N_n-f^N,
\LL^{n}_{\eta}f_*\rangle_{L^2(\R^d;dx)}\big|\\
&\quad+\big|\langle f^N-f,\LL_{\eta}^Kf_*\rangle_{L^2(\R^d;dx)}\big|+\big|\langle f-f_*,\LL_{\eta}^Kf_*\rangle_{L^2(\R^d;dx)}\big|\\
&=:I_1+I_2+I_3+I_4.\end{align*} By the Cauchy-Schwarz inequality,
$$I_4\le \sqrt{\E_{\eta}^K(f-f_*,f-f_*)\E_{\eta}^K(f_*,f_*)}\le \gamma \sqrt{\E_{\eta}^K(f,f)+\gamma^2}.$$ Since $f_*\in C_c^\infty(\R^d)$, $\LL_{\eta}^Kf_*$ has a compact support. Due to \eqref{e:note-}, the argument for $I_{3,1}^{n,\eta}$ in the proof of Theorem \ref{T:2.2} yields that $\lim_{n\to\infty}I_2=0,$ thanks to (iii) in assumption {\bf (H)}. Similarly, using (iv) in assumption {\bf (H)} and following the argument for $I_{3,2}^{n,\eta}$ in the proof of Theorem \ref{T:2.2},  we can also verify that $\lim_{n\to
\infty}I_1=0.$ Obviously, by the Cauchy-Schwarz inequality again, $$I_3\le \|f^N-f\|_{L^2(\R^d;dx)}\|\LL_{\eta}^Kf_*\|_{L^2(\R^d;dx)}=\|f\I_{\{|f|>N\}}\|_{L^2(\R^d;dx)}\|\LL_{\eta}^Kf_*\|_{L^2(\R^d;dx)}.$$ Therefore, putting all
the estimates together, we find that
\begin{align*}\E_{\eta}^K(f_*,f_*)\le &\limsup_{n\to\infty}
\E^{\e_n}_{\eta}(f^N_n,f_*)+\gamma\sqrt{\E_{\eta}^K(f,f)+\gamma^2}+\|f\I_{\{|f|>N\}}\|_{L^2(\R^d;dx)}\|\LL_{\eta}^Kf_*\|_{L^2(\R^d;dx)}\\
\le&\sqrt{\limsup_{n\to\infty} \E^{\e_n}_{\eta}(f^N_n,f^N_n)}
\sqrt{\E_{\eta}^K(f_*,f_*)}+\gamma \sqrt{\E_{\eta}^K(f,f)+\gamma^2}+\|f\I_{\{|f|>N\}}\|_{L^2(\R^d;dx)}\|\LL_{\eta}^Kf_*\|_{L^2(\R^d;dx)}\\
\le&\sqrt{\limsup_{n\to\infty} \E^{\e_n}_{\eta}(f_n,f_n)}
\sqrt{\E_{\eta}^K(f_*,f_*)}+\gamma \sqrt{\E_{\eta}^K(f,f)+\gamma^2}+\|f\I_{\{|f|>N\}}\|_{L^2(\R^d;dx)}\|\LL_{\eta}^Kf_*\|_{L^2(\R^d;dx)},\end{align*} where in the second inequality we used the Cauchy-Schwarz inequality and (iv) in assumption {\bf (H)}. Letting $N\to \infty$,
we obtain
$$\E_{\eta}^K(f_*,f_*)\le\sqrt{\limsup_{n\to\infty} \E^{\e_n}_{\eta}(f_n,f_n)}
\sqrt{\E_{\eta}^K(f_*,f_*)}+\gamma \sqrt{\E_{\eta}^K(f,f)+\gamma^2}.$$
Note that
$$
\E_{\eta}^K(f_*,f_*)-\gamma\le \E_{\eta}^K(f,f)\le \E_{\eta}^K(f_*,f_*)+\gamma.
$$
Then, combining two inequalities above together, and
letting $\gamma\to0$ and $\eta\to0$,  $$\lim_{n\to\infty}\E^{\e_n}(f_n,f_n)\ge\E^K(f,f).$$ The proof is complete.
\end{proof}

We say that a sequence of bounded operators $\{T_n\}$ on $L^2(\R^d;\mu_{\e_n}(dx))$ \emph{strongly converges} to an operator $T$ on $L^2(\R^d;dx)$, if for every sequence $\{u_n\}_{n\ge1}$ with $u_n\in L^2(\R^d;\mu_{\e_n}(dx))$ for all $n\ge1$ and strongly converging
in $L^2$-spaces to $u\in L^2(\R^d;dx)$, the sequence $\{T_n u_n\}_{n\ge1}$ strongly converges in $L^2$-spaces to
$Tu$; see \cite[Definition 2.6]{KS} or \cite[Definition 2.4]{Ko2}.

Let $(P^{\e_n,\w}_t)_{t\ge0}$ and $(U_\lambda^{\e_n,\w})_{\lambda>0}$ be the semigroup  and the resolvent  associated with the Dirichlet form $ (\E^{\e_n,\w},\F^{\e_n,\w})$, respectively. Let $(P^{K}_t)_{t\ge0}$ and $(U_\lambda^{K})_{\lambda>0}$ be the semigroup  and the resolvent  associated with the Dirichlet form $ (\E^{K},\F^{K})$, respectively.
Then, by Theorem \ref{T:mos} and \cite[Theorem 2.4]{KS}, (also thanks to the fact that Theorem \ref{T:mos} holds for any $\{\e_n\}_{n\ge1}$ that converges to $0$ which is independent of $\w\in \Omega_0$), we have

\begin{corollary}\label{C:or}
Under assumption {\bf(H)},
there is $\Omega_0\subset \Omega$ of full probability measure  so that for every $\w \in \Omega_0$, $P^{\e,\w}_t$ strongly converges
 in $L^2$-spaces to $P^K_t$ for every $t>0$ as $\e\to 0$; equivalently, $U_\lambda^{\e,\w}$ strongly converges in $L^2$-spaces
 to $U^K_\lambda$ for every $\lambda>0$ as $\e\to 0$.
\end{corollary}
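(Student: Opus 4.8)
The plan is to read off Corollary \ref{C:or} from the Mosco convergence established in Theorem \ref{T:mos}, by appealing to the general correspondence, due to Kuwae and Shioya, between Mosco convergence of Dirichlet forms over a varying sequence of $L^2$-spaces and strong convergence of the associated resolvents and semigroups; see \cite[Theorem 2.4]{KS}.

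First I would fix $\Omega_0\subset\Omega$ to be the full-measure set furnished by Theorem \ref{T:mos} under assumption {\bf (H)}, recording — as already noted just before the statement of that theorem — that for every $\w\in\Omega_0$ and every sequence $\e_n\to0$ one has $L^2(\R^d;\mu_{\e_n}(dx))\to L^2(\R^d;dx)$ in the sense of \cite{KS}, a consequence of Proposition \ref{P:2.1} together with $\Ee[\mu]=1$. This is precisely the framework hypothesis under which one can speak of strong and weak convergence of vectors and operators across the changing reference measures. Next, fixing $\w\in\Omega_0$ and an arbitrary sequence $\{\e_n\}_{n\ge1}$ with $\e_n\to0$, Theorem \ref{T:mos} asserts that $(\E^{\e_n,\w},\F^{\e_n,\w})$ converges to $(\E^K,\F^K)$ in the sense of Mosco. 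I would then invoke \cite[Theorem 2.4]{KS} to conclude that this is equivalent to the strong convergence in $L^2$-spaces of $U_\lambda^{\e_n,\w}$ to $U_\lambda^K$ for every $\lambda>0$, and equivalently to the strong convergence of $P_t^{\e_n,\w}$ to $P_t^K$ for every $t>0$; in particular the two convergences claimed in the corollary are equivalent to one another. Since $\Omega_0$ does not depend on the chosen sequence, this gives the assertion for all $\w\in\Omega_0$.

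I do not expect any serious obstacle: the substance is entirely in Theorem \ref{T:mos}, and what remains is bookkeeping — checking that $(\E^{\e_n,\w},\F^{\e_n,\w})$ and $(\E^K,\F^K)$ are genuine regular Dirichlet forms on $L^2(\R^d;\mu_{\e_n}(dx))$ and $L^2(\R^d;dx)$ respectively (which holds by Lemma \ref{L:d1} and the constructions of Section \ref{section1}), and that the underlying Hilbert spaces converge in the required sense (already observed). The only point that needs a little attention — and the reason Theorem \ref{T:mos} was phrased for an arbitrary sequence $\e_n\to0$ rather than a single one — is that one wants a common null set $\Omega\setminus\Omega_0$ valid simultaneously along all such sequences; with that uniformity in hand the passage from convergence along every sequence to the displayed statement is immediate, completing the proof.
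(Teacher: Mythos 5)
Your argument matches the paper's own proof: the paper derives Corollary \ref{C:or} immediately from Theorem \ref{T:mos} together with \cite[Theorem 2.4]{KS}, explicitly noting that the full-measure set $\Omega_0$ works simultaneously for every sequence $\e_n\to0$. Your additional bookkeeping remarks (convergence of the Hilbert spaces, regularity of the Dirichlet forms) are correct and merely make explicit what the paper leaves implicit.
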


\bigskip

\noindent {\bf Acknowledgements.}
The research of Xin Chen is supported by the National Natural Science Foundation of China (No.\ 11871338).\
The research of Zhen-Qing Chen is  partially supported by Simons Foundation grant 520542 and a Victor Klee Faculty Fellowship at UW.\
The research of Takashi Kumagai is supported
by JSPS KAKENHI Grant Number JP17H01093 and by the Alexander von Humboldt Foundation.\
The research of Jian Wang is supported by the National
Natural Science Foundation of China (No.\ 11831014), the Program for Probability and Statistics: Theory and Application (No.\ IRTL1704) and the Program for Innovative Research Team in Science and Technology in Fujian Province University (IRTSTFJ).

\vskip 0.3truein
{\small
{\bf Xin Chen:}
   Department of Mathematics, Shanghai Jiao Tong University, 200240 Shanghai, P.R. China.
   \newline Email: \texttt{chenxin217@sjtu.edu.cn}
	
\bigskip
	
{\bf Zhen-Qing Chen:}
   Department of Mathematics, University of Washington, Seattle,
WA 98195, USA. \newline  Email: \texttt{zqchen@uw.edu}

\bigskip

{\bf Takashi Kumagai:}
 Research Institute for Mathematical Sciences,
Kyoto University, Kyoto 606-8502, Japan.
Email: \texttt{kumagai@kurims.kyoto-u.ac.jp}

\bigskip

{\bf Jian Wang:}
    College of Mathematics and Informatics \& Fujian Key Laboratory of Mathematical Analysis and Applications, Fujian Normal University, 350007 Fuzhou, P.R. China.
   Email:  \texttt{jianwang@fjnu.edu.cn}}

\end{document}